\numberwithin{equation}{section}
\theoremstyle{plain}
  \newtheorem{theorem}[equation]{Theorem}
  \newtheorem{proposition}[equation]{Proposition}
  \newtheorem{lemma}[equation]{Lemma}
  \newtheorem{corollary}[equation]{Corollary}
\theoremstyle{definition}
  \newtheorem{definition}[equation]{Definition}
\theoremstyle{remark}
  \newtheorem{example}[equation]{Example}
  \newtheorem{remark}[equation]{Remark}
\newcommand{\enumeratea}{\thispagestyle{empty} \begin{enumerate}[label=\arabic*., ref=\arabic*, itemsep=1em, leftmargin=1em]}
\newcommand{\enumerateb}{\begin{enumerate}[label=\alph*., ref=\alph*, topsep = 3pt, leftmargin=1.25em, itemsep=.5em, listparindent=2em]}
\newcommand{\QQ}{\mathbb{Q}}
\newcommand{\ZZ}{\mathbb{Z}}
\DeclareMathOperator{\TC}{TC}
\DeclareMathOperator{\THH}{THH}
\DeclareMathOperator{\TP}{TP}
\DeclareMathOperator{\HH}{HH}
\DeclareMathOperator{\HC}{HC}
\DeclareMathOperator{\HP}{HP}
\newcommand{\Rpp}{R \mathbin{\!/\mkern-6mu/}\! p}
\newcommand{\Zpp}{\mathbb{Z} \mathbin{\!/\mkern-6mu/}\! p}
\newcommand{\Rppp}{R_{(p)} \mathbin{\!/\mkern-6mu/}\! p}
\begin{document}

\title{On the cyclic homology of certain universal differential graded algebras}

\author{Christopher Davis}
\author{Julius Frank}
\author{Irakli Patchkoria}

\maketitle

\begin{abstract}
Let $p$ be an odd prime and $R$ a $p$-torsion-free commutative $\ZZ_{(p)}$-algebra. We compute the periodic cyclic homology over $R$ of the universal differential graded algebra $\Rpp$ which is obtained from $R$ by universally killing $p$. We furthermore compute the cyclic and negative cyclic homologies of $\Rpp$ over $R$ in infinitely many degrees. 
\end{abstract}

\section{Introduction}\label{introduction section}

For a fixed prime $p$, we can consider the ring $\ZZ[u]/(u(u-p))$. It is an interesting problem to compute the algebraic $K$-theory $K(\ZZ[u]/(u(u-p)))$  of this ring. It follows from a result of Land and Tamme \cite[Theorem 1.1 and Example 4.31]{LT} that there is a homotopy pullback square of algebraic $K$-theory spectra
\[\xymatrix{K(\ZZ[u]/(u(u-p))) \ar[d] \ar[r] & K(\ZZ) \ar[d] \\ K(\ZZ) \ar[r] & K(\Zpp),}\]
where $\Zpp$ is the universal differential graded algebra with one generator $x$ in degree $1$ satisfying $dx=p$. 

The differential graded algebra $\Zpp$ has a different behaviour depending on whether the prime $p$ is even or odd. For $p=2$, it follows from unpublished results of Krause and Nikolaus and \cite[Example 4.32]{LT} that the algebra $\Zpp$ is formal over the sphere spectrum. For $p$ odd, the results of the second author \cite{Frank} imply that $\Zpp$ is not formal as an $E_1$-ring spectrum and hence also not as a differential graded algebra. 

More generally, one can consider any $p$-torsion-free commutative $\ZZ_{(p)}$-algebra $R$ and the differential graded algebra $\Rpp$. The underlying graded algebra of $\Rpp$ is $R[x]$ such that $x$ is in degree $1$ and $dx=p$. It follows from the Dundas-McCarthy theorem \cite{Dund, Mc} that the commutative diagram
\[ \xymatrix{K(\Rpp) \ar[r]  \ar[d] & \TC(\Rpp) \ar[d] \\ K(R/p) \ar[r] & \TC(R/p)  }\]
is a homotopy pullback after $p$-adic completion, where $\TC$ denotes the topological cyclic homology and the horizontal maps are given by the cyclotomic trace map (see \cite{BokHsiaMad}). If one assumes the knowledge of $K(R/p)$ and $\TC(R/p)$, then using the latter square in order to compute $K(\Rpp)$, it suffices to compute $\TC(\Rpp)$. One can try to calculate $\TC(\Rpp)$ using the approach of Nikolaus-Scholze \cite{NS}. Recall that the topological Hochschild homology spectrum $\THH(\Rpp)$ has a circle action, and one denotes the homotopy fixed points with respect to this action by $\TC^{-}(\Rpp)$ and the Tate construction by $\TP(\Rpp)$. Using \cite[Proposition II.1.9 and Lemma II.4.2.1]{NS}, we know that after $p$-completion there exists a fiber sequence of spectra 
\[\xymatrix{\TC(\Rpp) \ar[r] & \TC^{-}(\Rpp) \ar[r]^-{can-\varphi} & \TP(\Rpp),}\] where $can$ is the canonical map and $\varphi$ the Frobenius.  
Thus in order to compute the topological cyclic homology $\TC(\Rpp)$, one should compute $\TC^{-}(\Rpp)$ and $\TP(\Rpp)$ as well as the maps $can$ and $\varphi$.

The ultimate goal of this project is to compute the $p$-completion of the spectrum $K(\Rpp)$ for $R$ a $p$-torsion-free perfectoid ring in the sense of \cite{BMS1}. In this case it follows by \cite{CMM} and the Dundas-McCarthy theorem \cite{Dund, Mc}, that the $p$-completion of $K(\Rpp)$ is the connective cover of the $p$-completion of $\TC(\Rpp)$. Hence we need to compute $\TC^{-}(\Rpp)$ and $\TP(\Rpp)$ and eventually also $\TC(\Rpp)$ for $R$ a $p$-torsion-free perfectoid ring. We do not recall here perfectoid rings since the definition of these will not be relevant in this paper. However, we do wish to recall that by \cite{BMS2},  for any perfectoid ring $R$, there is a circle equivariant fiber sequence after $p$-completion: 
\[\THH(\Rpp)[2] \xrightarrow{}  \THH(\Rpp) \to  \HH^R(\Rpp),\]
where $\HH^R(\Rpp)$ denotes the Hochschild homology of $\Rpp$ over $R$. By applying either the homotopy fixed points or Tate construction, one gets fiber sequences after $p$-completion:
\[\xymatrix{\TC^{-}(\Rpp)[2] \to \TC^{-}(\Rpp) \to \HC^{R,-}(\Rpp),\:\:\:\; \TP(\Rpp)[2] \to \TP(\Rpp) \to \HP^R(\Rpp),}\]
where $\HC^{R,-}(\Rpp)$ and $\HP^R(\Rpp)$ denotes the negative and periodic cyclic homology over $R$, respectively \cite[Chapter 5]{Lod}. 

The goal of this paper is to compute $\HC^{R,-}(\Rpp)$ and $\HP^R(\Rpp)$ for any $p$-torsion-free commutative $\ZZ_{(p)}$-algebra $R$ which is not necessarily perfectoid. Having the above fiber sequences in mind, the hope is that we can solve extension problems and compute $\TC^{-}(\Rpp)$ and $\TP(\Rpp)$. 

For simplicity we denote by $\HC_i(\Rpp)$, $\HC^{-}_i(\Rpp)$ and $\HP_i(\Rpp)$, the cyclic, negative cyclic and periodic cyclic homology modules of $\Rpp$ over $R$, respectively. We can now formulate the main results of this paper:

\begin{theorem} Let $p$ be an odd prime and $R$ a $p$-torsion-free commutative $\ZZ_{(p)}$-algebra. Then $\HP_i(\Rpp)=0$ for $i$ odd and for any even $i$, the $R$-module $\HP_i(\Rpp)$ is isomorphic to
\[R^{\wedge} \times R/1 \times R/3 \times R/5 \times \cdots,\]
where $R^{\wedge}$ denotes the $p$-adic completion of $R$. 
\end{theorem}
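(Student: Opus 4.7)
The plan is to compute the Hochschild homology $\HH^R_*(\Rpp)$ via an explicit two-term bimodule resolution, and then extract the periodic cyclic homology by analyzing the associated mixed complex at the chain level. Because the underlying graded $R$-algebra of $\Rpp$ is the tensor algebra $T_R(Rx)$ on a single degree-$1$ generator, there is a length-one $\Rpp$-bimodule resolution
\[
0 \longrightarrow \Rpp \otimes_R (Rx) \otimes_R \Rpp \xrightarrow{\mu} \Rpp \otimes_R \Rpp \xrightarrow{m} \Rpp \longrightarrow 0,
\]
where $\mu(a \otimes x \otimes b) = ax \otimes b - a \otimes xb$. Tensoring with $\Rpp$ over $\Rpp \otimes_R \Rpp^{\mathrm{op}}$ produces a two-row double complex computing $\HH^R_*(\Rpp)$: the horizontal Hochschild boundary sends $x \otimes x^n$ to $(1-(-1)^n)x^{n+1}$, and each row carries the internal DGA differential $\partial(x^{2k}) = 0$, $\partial(x^{2k+1}) = p x^{2k}$. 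Totalizing and using that $2$ is a unit in $R$ (since $p$ is odd) yields an explicit $R$-module description of $\HH^R_*(\Rpp)$ concentrated in even degrees. In particular the Connes operator $B$ then vanishes on $\HH^R$, so the chain-level mixed complex data will be essential for computing $\HP$.

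Next, identify the chain-level $B$ on the small double complex via the standard comparison with the full cyclic bar complex (where $B = (1-t)sN$), tracking signs carefully because $\Rpp$ is associative but not graded-commutative. Assemble the chain-level mixed complex into the Tsygan bicomplex and compute its product totalization. The $(b + B)$-closed series in the periodicity variable are governed by a coupled linear system in which $b$ effectively multiplies coefficients by $p$ (via $\partial$) while $B$ multiplies them by positive integer weights. Decomposing the solutions of this system produces, in each even degree, a principal tower of the form $\cdots \twoheadrightarrow R/p^3 \twoheadrightarrow R/p^2 \twoheadrightarrow R/p$ whose inverse limit is $R^{\wedge}$, together with, for each odd integer $2k+1 \geq 1$, an isolated cycle of order $2k+1$ contributing a factor $R/(2k+1)$. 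Odd degrees of $\HP$ vanish because every cycle sits in even total bidegree.

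The technical heart of the argument is the chain-level analysis of the Connes operator and the product totalization. Since $B$ vanishes on $\HH$, one is forced into what is essentially a secondary-operation computation, and the appearance of the $p$-adic completion $R^\wedge$ requires a careful inverse-limit manipulation involving towers that interleave multiplication by $p$ (from the DGA differential) with the integer weights produced by $B$.
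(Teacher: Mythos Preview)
Your setup matches the paper's: the tensor-algebra bimodule resolution gives the two-row Hochschild bicomplex, and after identifying the Connes operator on that small model one lands on exactly the bicomplex of Proposition~\ref{prop: bicomplex}. From there $\HP_0(\Rpp)$ is the cokernel of the map $\prod_{\NN} R \to \prod_{\NN} R$ sending $(x_1,x_2,\ldots)$ to $(px_1,\, x_1+p^2x_2,\, 3x_2+p^2x_3,\,5x_3+p^2x_4,\ldots)$, and the problem is to identify that cokernel.

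The gap is in your last two paragraphs. You assert that the $(b+B)$-system ``decomposes'' into a tower $\cdots \twoheadrightarrow R/p^3 \twoheadrightarrow R/p^2 \twoheadrightarrow R/p$ together with ``isolated cycles of order $2k+1$,'' but neither of these is what one actually sees in the complex. The natural inverse system here is not $R/p^n$ but the sequence of modules $N_i$ obtained by truncating the staircase at height $i$; for generic $i$ these are \emph{not} of the form $R/p^{?} \oplus R/1 \oplus \cdots \oplus R/i$, and the transition maps $N_k \to N_i$ are not the obvious projections (there is a nontrivial contribution to the first coordinate coming from the higher factors). Likewise, the summands $R/(2k+1)$ do not arise from isolated cycles: each odd integer $n$ divisible by $p$ is coupled to its neighbours $n\pm 2, n\pm 4,\ldots$ through the horizontal differential, and one has to argue that this coupling is eventually negligible.

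What the paper does to resolve this is introduce auxiliary sequences $A_j, B_j \in \ZZ_{(p)}$ (defined recursively by $A_j = p^2 A_{j-2}/j$ and $B_j = p^2 B_{j-2}/j$), use them to compute a slightly larger colimit $M_i \cong R \oplus R/1 \oplus \cdots \oplus R/i$ explicitly, and then exhibit an infinite cofinal set $Z_1$ of odd integers---those lying outside every interval $[n, n+g(n)]$ with $p\mid n$, where $g(n)$ measures how long $v_p(B_j)$ stays below $v_p(n)$---on which $N_i$ \emph{does} have the simple form $R/p^{a_i+2} \oplus R/1 \oplus \cdots \oplus R/i$. Only after restricting the inverse limit to $i \in Z_1$ and controlling the extra ``$C$-terms'' in the transition maps (Corollary~\ref{projection Z1}) can one read off $N_\infty \cong R^{\wedge} \times \prod_{k\geq 0} R/(2k+1)$. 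Your proposal supplies none of this machinery, and without it the claimed decomposition is a restatement of the answer rather than an argument.
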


Note that all the factors $R/n$ with $n$ coprime to $p$ vanish, and more generally, for any odd positive integer $n$, we have $R/n\cong R/p^{\nu_p(n)}$, where $\nu_p$ denotes the $p$-adic valuation. 

To formulate the next result on the cyclic homology we need to define the following numbers: Let $A_1 := p$, and for each odd integer $j \geq 3$, we recursively define $A_j \in \QQ$ as $A_j := \frac{p^2 A_{j-2}}{j}$. Corollary \ref{AB in Zp} below shows that these numbers belong to $\ZZ_{(p)}$ and hence the $p$-adic valuations $a_i=\nu_p(A_i)$ are non-negative integers. 

\begin{theorem} Let $p$ be an odd prime and $R$ a $p$-torsion-free commutative $\ZZ_{(p)}$-algebra. Then $\HC_i(\Rpp)=0$ for $i$ odd and $\HC_0(\Rpp)=R/p$. There furthermore exists an infinite set $Z$ of positive even integers, such that for any $i \in Z$, the $R$-module $\HC_i(\Rpp)$ is isomorphic to
\[
R/p^{a_{i-1}+2} \oplus R/1 \oplus R/3 \oplus \cdots \oplus R/(i-1).
\]
The set $Z$ contains all even numbers of the form $\frac{p^a\pm 1}{2}+1$ for $a>0$. 
\end{theorem}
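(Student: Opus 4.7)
The plan is to derive the cyclic homology from the already established computation of $\HP_i(\Rpp)$ together with a parallel computation of $\HC^{-}_i(\Rpp)$ (the companion theorem advertised in the abstract), by means of the fundamental fiber sequence
\begin{equation*}
\Sigma \HC(\Rpp) \to \HC^{-}(\Rpp) \to \HP(\Rpp).
\end{equation*}
This yields the Connes long exact sequence
\begin{equation*}
\cdots \to \HC^{-}_{i+2}(\Rpp) \xrightarrow{\can} \HP_{i+2}(\Rpp) \to \HC_i(\Rpp) \to \HC^{-}_{i+1}(\Rpp) \to \HP_{i+1}(\Rpp) \to \cdots,
\end{equation*}
in which each $\HC_i(\Rpp)$ is expressed via the canonical map $\can$ one shift up.

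The odd-degree vanishing $\HC_i(\Rpp)=0$ follows at once: the previous theorem gives $\HP_i=0$ for odd $i$, and the companion computation of $\HC^{-}$ gives $\HC^{-}_i=0$ for odd $i$ by the same even-degree pattern that $\HP$ enjoys. For $\HC_0$ I would argue directly: $\HH^R_0(\Rpp)=H_0(\Rpp)=R/p$ by the defining relation $dx=p$, and $\HC_0=\HH_0$ since there is no room for an incoming SBI differential, giving $\HC_0(\Rpp)=R/p$.

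For even $i\in Z$, both neighbors $\HC^{-}_{i+1}$ and $\HP_{i+1}$ in the exact sequence vanish, so
\begin{equation*}
\HC_i(\Rpp) \;\cong\; \coker\!\bigl(\HC^{-}_{i+2}(\Rpp) \xrightarrow{\can} \HP_{i+2}(\Rpp)\bigr).
\end{equation*}
Using $\HP_{i+2}\cong R^{\wedge}\times R/1\times R/3\times\cdots$ from the previous theorem, I expect the image of $\can$ to cover the tail $\prod_{2k+1\ge i+1} R/(2k+1)$ and to land in a submodule $p^{a_{i-1}+2}R^{\wedge}$ of the leading factor, producing exactly the cokernel $R/p^{a_{i-1}+2}\oplus R/1\oplus\cdots\oplus R/(i-1)$. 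The numbers $A_j$ should emerge as the successive images of a Bott-style generator under the Connes $B$-operator on $\HH^R(\Rpp)$: the factor $p^2$ in $A_j=p^2 A_{j-2}/j$ reflects two Bott shifts per iteration, the division by $j$ is forced by the Hochschild boundary of an odd-degree class, and the extra $+2$ in the exponent $a_{i-1}+2$ accounts for a final stage of the Bott filtration not yet recorded in $A_{i-1}$.

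The main obstacle is controlling the image of $\can$ precisely enough to read off the cokernel as a direct sum of cyclic $R$-modules, which is why the theorem restricts $i$ to the subset $Z$. Outside $Z$ the spectral sequence computing $\HC^{-}$ from the mixed complex $(\HH^R(\Rpp),B)$ may have multiplicative extensions whose splitting I cannot guarantee; for $i=(p^a\pm 1)/2+1$ the successive valuations $a_j$ jump regularly enough that the extensions do split. Verifying the containment $(p^a\pm 1)/2+1\in Z$ should reduce to a $p$-adic valuation estimate using the explicit recursion for $A_j$, and I expect this combinatorial step to be the most delicate part of the proof.
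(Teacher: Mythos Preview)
Your strategy is essentially the reverse of the paper's logical flow, and as written it is circular. In the paper, the cyclic homology $\HC_i(\Rpp)$ is computed \emph{first}, by an explicit chain-level argument: one uses the simplified cyclic bicomplex for tensor algebras to reduce $\HC_{i+1}(\Rpp)=N_i$ to the colimit of a finite staircase diagram, introduces an auxiliary colimit $M_i\cong R\oplus R/1\oplus\cdots\oplus R/i$, and then identifies $N_i$ as the cokernel of an explicit map $p^2\phi_{i,i}:R\to M_i$. The set $Z$ (called $Z_1$ internally) consists of those $i$ for which $\phi_{i,i}(1)$ is supported only in the leading $R$-summand; this is a purely arithmetic condition on the $p$-adic valuations $b_j$, and the containment of $\tfrac{p^a\pm1}{2}$ follows from an elementary estimate $g(p^a)<\tfrac{p^a-1}{2}$. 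Only \emph{after} this is the periodic homology $N_\infty=\varprojlim N_i$ assembled, and $\HC^{-}$ is then obtained as the kernel $K_i$ of $N_\infty\to N_i$, using the already-established description of $N_i$. So invoking the ``companion theorem'' for $\HC^{-}$ to deduce $\HC$ presupposes the very result you are trying to prove.

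Even setting the circularity aside, the proposal does not contain a proof. The entire content lies in identifying the image of $\can:\HC^{-}_{i+2}\to\HP_{i+2}$, and for this you offer only ``I expect the image of $\can$ to cover the tail\ldots and to land in $p^{a_{i-1}+2}R^{\wedge}$''. That expectation is precisely what has to be established, and doing so requires exactly the kind of explicit control over the generators $\psi_j(1)\in N_\infty$ that the paper develops in Sections~3--5. Your heuristic for the recursion $A_j=p^2A_{j-2}/j$ (``two Bott shifts, division forced by the Hochschild boundary'') is suggestive but not an argument; in the paper these numbers arise concretely as the leading entries of $\phi_{j,j}(1)$ under the inductive pushout construction of $M_j$. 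Finally, note that the paper's $Z$ for $\HC$ is strictly larger than its $\overline{Z}$ for $\HC^{-}$, so even a correct version of your route would a priori yield the result on a smaller set than the paper achieves.
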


Finally, we have the following result calculating the negative cyclic homology:

\begin{theorem} Let $p$ be an odd prime and $R$ a $p$-torsion-free commutative $\ZZ_{(p)}$-algebra. Then $\HC^{-}_i(\Rpp)=0$ for $i$ odd and for any non-positive even $i$, the $R$-module $\HC^{-}_i(\Rpp)$ is isomorphic to
\[R^{\wedge} \times R/1 \times R/3 \times R/5 \times \cdots.\]
There furthermore exists an infinite set $\overline{Z} \subset Z$, such that for any $i \in \overline{Z}$, the $R$-module $\HC^{-}_i(\Rpp)$ is isomorphic to
\[R^{\wedge} \times R/(i-1) \times R/(i+1) \times R/(i+3) \times \cdots.\]
The set $\overline{Z}$ contains all even numbers of the form $\frac{p^a\pm 1}{2}+1$ for $a>0$. 
\end{theorem}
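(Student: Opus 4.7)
The plan is to combine Theorems 1 and 2 via Loday's SBI long exact sequence \cite[\S 5.1]{Lod}
\[
\cdots \to \HC^{-}_i(\Rpp) \to \HP_i(\Rpp) \xrightarrow{\;S\;} \HC_{i-2}(\Rpp) \to \HC^{-}_{i-1}(\Rpp) \to \HP_{i-1}(\Rpp) \to \cdots,
\]
reducing the task to analysing the $S$-operator in the relevant degrees.

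The odd-degree vanishing is best handled first and independently of SBI. Provided the underlying Hochschild homology $\HH^R(\Rpp)$ is concentrated in even degrees (which should follow from the computations underlying Theorems 1 and 2), the homotopy fixed points spectral sequence with $E_2$-page $\HH_{*}(\Rpp)[[u]]$, $|u|=-2$, abutting to $\HC^{-}_*(\Rpp)$ immediately forces $\HC^{-}_i(\Rpp)=0$ for $i$ odd. For $i \leq 0$ even, cyclic homology vanishes in negative degrees, so the LES collapses to an isomorphism $\HC^{-}_i(\Rpp) \cong \HP_i(\Rpp)$, producing the claimed product via Theorem 1.

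For $i \in \overline{Z}$ positive even, the odd-degree vanishings of all three theories collapse the LES to the short exact sequence
\[
0 \to \HC^{-}_i(\Rpp) \to \HP_i(\Rpp) \xrightarrow{\;S\;} \HC_{i-2}(\Rpp) \to 0,
\]
so $\HC^{-}_i(\Rpp) = \ker(S)$. The final step is then to compute $S$ at the chain level using Connes' bicomplex and the explicit cocycle representatives from the proofs of Theorems 1 and 2. The expectation is that $S$ kills the tail factors $R/(i-1), R/(i+1), \ldots$ of $\HP_i$, maps the head $R/1 \oplus R/3 \oplus \cdots \oplus R/(i-3)$ onto the corresponding summands of $\HC_{i-2}$, and reduces the $R^{\wedge}$-factor of $\HP_i$ onto the extra summand $R/p^{a_{i-3}+2}$ of $\HC_{i-2}$. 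Since $R$ is $p$-torsion-free, the kernel $p^{a_{i-3}+2}R^{\wedge}$ of this reduction is itself isomorphic to $R^{\wedge}$, producing the $R^{\wedge}$-factor in $\HC^{-}_i(\Rpp)$.

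The main obstacle is this chain-level analysis: one must show that the image of the $R^{\wedge}$-factor under $S$ has exactly the order $p^{a_{i-3}+2}$ dictated by the recursion defining $A_j$ in Corollary \ref{AB in Zp}. This requires careful bookkeeping with Connes' periodicity operator on the explicit generators built in the earlier proofs; once accomplished, the remaining identifications are straightforward because $R^{\wedge}$ is $p$-torsion-free and the kernel decomposes compatibly with the product structure.
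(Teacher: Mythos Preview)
Your overall framework via the short exact sequence
\[
0 \to \HC^{-}_i(\Rpp) \to \HP_i(\Rpp) \xrightarrow{S} \HC_{i-2}(\Rpp) \to 0
\]
is exactly what the paper uses (see the end of Section~\ref{setup section} and \cite[5.1.5]{Lod}), and your treatment of odd $i$ and of $i\le 0$ is fine. The substantive gap is in the positive case.

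You propose to compute $\ker(S)$ by first invoking Theorem~2 to decompose the target $\HC_{i-2}(\Rpp)$ as $R/p^{a_{i-3}+2}\oplus R/1\oplus\cdots\oplus R/(i-3)$ and then describing $S$ relative to that decomposition. But Theorem~2 only furnishes this description when $i-2\in Z$, and membership in $\overline{Z}$ does \emph{not} force $i-2\in Z$. In the paper's internal notation, $i\in\overline{Z}$ means the odd integer $i-1$ lies in the set $Z_2$ of Definition~\ref{def of Z2}, whereas $i-2\in Z$ means $i-3\in Z_1$ (Definition~\ref{def of Z1}); these conditions are independent. Concretely, for $p=3$ one has $31\in Z_2$ but $29\notin Z_1$, so $32\in\overline{Z}$ while $30\notin Z$. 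Thus for such $i$ you have no a priori decomposition of $\HC_{i-2}(\Rpp)$ to work with, and your kernel computation cannot get started. In fact the paper runs the logic the other way: the structure of $\HC_{i-2}(\Rpp)$ for $i\in\overline{Z}$ is obtained as a \emph{corollary} of the $\HC^{-}_i$ computation (see the Corollary following Proposition~\ref{kernel gens} and the remark in the introduction), not as an input.

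The paper avoids this circularity by never describing the target of $S$. Instead it works entirely inside $N_\infty=\HP_0(\Rpp)$ and identifies $K_{i-2}=\HC^{-}_i(\Rpp)$ directly as the closure of the submodule generated by the images $\psi_j(1)$ of the staircase generators for $j\ge i-1$ (Lemma~\ref{gens of kernel}). The heart of the argument, Proposition~\ref{kernel gens}, is a purely combinatorial statement about $p$-adic valuations (Lemma~\ref{ain}): for $i-1\in Z_2$ one shows by induction that the submodule generated by $\psi_{i-1}(1),\psi_{i+1}(1),\ldots$ coincides with the one generated by $A_{i-1}e_{-1},e_{i-1},e_{i+1},\ldots$, whose closure is visibly $R^{\wedge}\times R/(i-1)\times R/(i+1)\times\cdots$. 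The condition $i-1\in Z_2$ (a two-sided exclusion around each multiple of $p$, rather than the one-sided exclusion defining $Z_1$) is precisely what is needed to make this induction go through; it guarantees both that $a_{i-1}=\min_{m\ge i-1}a_m$ and that the off-diagonal components $c_{j,n}$ vanish for $n<i-1$. Your chain-level bookkeeping with $S$ would ultimately have to reproduce this same valuation analysis, but without the shortcut of an already-decomposed target.
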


Additionally, we also compute the values of $\HC_{i-2}(\Rpp)$ for $i \in \overline{Z}$ and $i>2$. This is a consequence of the proof of the latter theorem from which it follows that for any $i \in \overline{Z}$ and $i>2$,  we have an isomorphism 
\[\HC_{i-2}(\Rpp) \cong R/p^{a_{i-1}} \oplus R/1 \oplus R/3 \oplus \cdots \oplus R/(i-3).\]

The values of $\HC_i(\Rpp)$ and $\HC^{-}_i(\Rpp)$ for a general $i$ remain still open. However, in the final section of this paper we estimate the sizes of the sets $Z$ and $\overline{Z}$ and conclude that for large primes they get asymptotically close to the set of all positive even integers. 

\section*{Acknowledgements}

We would like to thank John Jones and Markus Land for helpful conversations.

\section{Setup} \label{setup section}
Fix an odd prime $p$ and let $R$ be a commutative ring without $p$-torsion. We do not yet require $R$ to be a $\ZZ_{(p)}$-algebra.
\begin{definition}
  We construct a differential graded algebra $\Rpp$ 
  over $R$ as follows: It has as its underlying graded algebra $R[x]$ with $x$ in degree $1$, and the differential is given as $\delta x=p$.
\end{definition}
\begin{remark} 
We collect some immediate observations for this algebra: 
\begin{itemize}
  \item Note that while this is strictly commutative, it is not graded commutative. 
  \item The differential in higher degrees is 
  \[
    \delta x^n=\begin{cases}
      p x^{n-1} & n \textup{ odd}\\
      0 & n \textup{ even.}
    \end{cases}
  \]
  Because $R$ has no $p$-torsion, the homology is then $H_*(\Rpp)= R/p[u]$, with $u$ in degree $2$.
\end{itemize}

\end{remark}
To justify the notation, we give a more conceptual description of $\Rpp$. Let $T_R:\textup{Ch}_R\to \textup{DGA}_R$ denote the tensor algebra functor that assigns to each chain complex $M$ the free differential graded $R$-algebra $T_R(M)=\bigoplus_{m\geq 0}M^{\otimes_R m}$. 
\begin{proposition}
  There is a pushout square in $\textup{DGA}_R$ given by
  \[
   \xymatrixrowsep{1.5em}
   \xymatrixcolsep{2em}
   \xymatrix{  
   T_R(R) \ar^{p}[r]\ar_{}[d] & R\ar_{}[d]\\
   T_R(CR) \ar_{}[r] & \Rpp,
   }
   \]
   where $CR$ is the cone of $R$ in $\textup{Ch}_R$, the left map is induced by the inclusion $R\hookrightarrow CR$, and the top map is adjoint to the multiplication $p:R\to R$.

\end{proposition}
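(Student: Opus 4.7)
The plan is to verify the claim by comparing universal properties and invoking Yoneda. For any DGA $R$-algebra $A$, I would produce a natural bijection between $R$-algebra DGA maps $\Rpp \to A$ and compatible pairs of DGA maps $T_R(CR) \to A$ and $R \to A$ that agree after pullback along the span $T_R(CR) \leftarrow T_R(R) \to R$.

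First I unwind the universal property of the proposed pushout. Using the free-forgetful adjunction $T_R \dashv U : \textup{DGA}_R \rightleftarrows \textup{Ch}_R$, a DGA map $T_R(CR) \to A$ corresponds to a chain complex map $CR \to A$; since $CR$ is freely generated by elements $e_0, e_1$ in degrees $0$ and $1$ with $d e_1 = e_0$, such a chain map amounts to a pair $(a_0, a_1) \in A_0 \times A_1$ with $d_A a_1 = a_0$. The map $R \to A$ is the given $R$-algebra structure, which is unique. Compatibility along $T_R(R) \to T_R(CR)$ and $T_R(R) \xrightarrow{p} R$ then demands that the element $a_0 \in A_0$ (selected by the free generator of $T_R(R)$ pulled through $T_R(CR)$) equal $p \cdot 1_A$ (selected after composing the adjoint-of-$p$ with the $R$-algebra structure). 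Hence compatible maps out of the span into $A$ are precisely elements $a_1 \in A_1$ satisfying $d_A a_1 = p \cdot 1_A$.

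On the other side, $\Rpp$ should be the free associative DGA $R$-algebra on one degree-$1$ generator with differential $p$: an $R$-algebra DGA map $\Rpp \to A$ is determined by the image $a$ of $x$, subject only to $d_A a = p \cdot 1_A$. Matching these two descriptions and applying Yoneda gives the desired isomorphism. The main (and essentially only) point that needs verifying is this universal property of $\Rpp$ itself; it reduces to the observation that $R[x]$ is free as a graded associative $R$-algebra on the single degree-$1$ generator $x$ (automatic since there is only one variable), together with checking that the Leibniz rule extends the prescription $x \mapsto a$ uniquely to an $R$-algebra DGA map once $d_A a = p \cdot 1_A$ is imposed. Once that is in place, the rest of the argument is a routine exercise in adjunction-chasing.
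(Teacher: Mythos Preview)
Your proposal is correct and follows essentially the same approach as the paper: both arguments identify maps out of the pushout with elements $a\in A_1$ satisfying $d_A a = p\cdot 1_A$, and match this against the universal property of $\Rpp$. The paper just phrases the adjunction step slightly differently, writing $T_R(CR)\cong R\langle x,\delta x\rangle$ and $T_R(R)\cong R\langle \delta x\rangle$ explicitly rather than invoking the free--forgetful adjunction in the abstract.
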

\begin{proof}
  A morphism $\varphi:\Rpp\to A$ of dg-$R$-algebras is precisely a ring homomorphism $\tilde\varphi:R\to A_0$ and the choice of an $a\in A_1$ such that $\tilde\varphi(p)=\delta a$. By identifying $T_R(CR)\cong  R\langle x, \delta x\rangle$ with $|x|=1$, and $T_R(R)\cong R\langle \delta x\rangle$, this data corresponds precisely to a commutative diagram 
  \[
   \xymatrixrowsep{1.5em}
   \xymatrixcolsep{2em}
   \xymatrix{  
   R \langle \delta x \rangle \ar^{p}[r]\ar_{}[d] & R\ar_{}[d]\\
   R\langle x,\delta x\rangle \ar_{}[r] & A,
   }
   \]
   where $x$ is mapped to $a$.

\end{proof}
\begin{remark}
  This exhibits $\Rpp$ as the $\mathbb{E}_1$-quotient of $R$ with respect to $p$, i.e. $\Rpp$ is the initial $\mathbb{E}_1$-R-algebra whose homology has $p=0$. More precisely, there is a model structure on $\textup{DGA}_R$ with quasi-isomorphisms as weak equivalences, under which the left map is a cofibration \cite{Hin, SchSh}. Hence the square is a homotopy pushout of dg-$R$-algebras. The homotopy theory of dg-$R$-algebras is equivalent to the homotopy theory of $\mathbb{E}_1$-algebras in the derived $\infty$-category $\mathcal{D}(R)$ (see \cite{Shi1}). The homotopy pushout then corresponds to a pushout in the $\infty$-category of $\mathbb{E}_1$-algebras over $R$. 
\end{remark}
   
The aim of this paper is to understand cyclic, negative cyclic, and periodic homology of $\Rpp$ over $R$. 

As a first approach, note that there is a Connes long exact sequences connecting cyclic homology with Hochschild homology: This is obtained by considering the subcomplex of the cyclic bicomplex consisting only of the $0$th and $1$st column which is quasi-isomorphic to the Hochschild complex \cite[2.2.1]{Lod}. The quotient complex then computes cyclic homology shifted by degree $2$. The resulting long exact sequence for any algebra $A$ is therefore
\[
  \cdots \longrightarrow \HH_i(A)\longrightarrow \HC_i(A) \longrightarrow \HC_{i-2}(A) \longrightarrow \cdots
\]
In our setting, $\HC_*$ vanishes in odd degrees, so we obtain for $i>0$ even short exact sequences
\[
  0\longrightarrow R/p^2\longrightarrow \HC_{i}(\Rpp)\longrightarrow \HC_{i-2}(\Rpp) \longrightarrow 0,
\]
and we recover $HC_{0}(\Rpp) \cong R/p$.

Similarly, one gets a long exact sequence for negative cyclic homology:
\[
  \cdots \longrightarrow \HC^-_{i+2}(A)\longrightarrow \HC^-_i(A) \longrightarrow \HH_{i}(A) \longrightarrow \cdots.
\]
Below in our setting, we obtain for even $i>0$ short exact sequences
\[
  0 \longrightarrow \HC^-_{i+2}(\Rpp)\longrightarrow \HC^-_i(\Rpp) \longrightarrow R/p^2 \longrightarrow 0.
\]
and for $i=0$
\[
 0 \longrightarrow \HC^-_{2}(\Rpp)\longrightarrow \HC^-_0(\Rpp) \longrightarrow R/p \longrightarrow 0.
\]

This looks innocent enough, but these extension problems will occupy the rest of this paper.

To get started, we will need a better understanding of the Hochschild bicomplex:
\begin{proposition} \label{Hochschildsmall}
   The Hochschild bicomplex of $\Rpp$ is quasi-isomorphic to 
   \[
   \xymatrixrowsep{1.5em}
   \xymatrixcolsep{2em}
   \xymatrix{  
   & R\ar_{0}[d] & R \ar_{p}[l]\ar_{2}[d] & R \ar_{0}[l]\ar_{0}[d] & R \ar_{p}[l]\ar_{2}[d] & \cdots \ar_{0}[l]\\
   R & R\ar_{p}[l] & R \ar_{0}[l] & R\ar_{p}[l] & R \ar_{0}[l] & \cdots \ar_{p}[l]
   }
   \]
 \end{proposition}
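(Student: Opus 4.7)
The plan is to replace the full Hochschild bicomplex of $A := \Rpp$ by a small $2$-term DG-bimodule resolution of $A$ over $A^e := A \otimes_R A^{\mathrm{op}}$. This is possible because the underlying graded algebra of $A$ is the tensor algebra $T_R(Rx)$ with $x$ in degree $1$, which admits a $2$-term bimodule resolution of Koszul type.

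First I would write down the candidate resolution
\[
0 \to A \otimes_R Rx \otimes_R A \xrightarrow{d} A \otimes_R A \xrightarrow{\mu} A \to 0,
\]
where $\mu$ is multiplication and $d(a \otimes x \otimes b) = ax \otimes b - a \otimes xb$. Exactness as a sequence of graded bimodules is the classical Koszul resolution of the tensor algebra, so the remaining content is to verify that $d$ and $\mu$ are DG-bimodule maps; this is a short direct computation using $\delta x = p$ in which the extra $p$-contributions from $\delta(ax)=\delta(a)x + (-1)^{|a|}ap$ and $\delta(xb)=pb - x\delta(b)$ cancel pairwise.

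Next I would apply $A \otimes_{A^e}(-)$ to the resolution. Both $A \otimes_R A$ and $A \otimes_R Rx \otimes_R A$ are free $A^e$-modules on a single DG generator and hence cofibrant, so this computes $\HH_R(A) \simeq A \otimes^L_{A^e} A$. Under the standard identifications $A \otimes_{A^e}(A \otimes_R A) \cong A$ and $A \otimes_{A^e}(A \otimes_R Rx \otimes_R A) \cong A \otimes_R Rx$, the map induced by $d$ becomes the graded commutator $\bar d(c \otimes x) = cx - (-1)^{|c|} xc$ with $x$. Since $R[x]$ is strictly commutative as a plain ring, $xc = cx$, and $\bar d$ collapses to multiplication by $1-(-1)^{|c|}$, i.e.\ by $2$ on $x^{n-1}\otimes x$ in even internal degree $n$ and by $0$ in odd internal degree $n$. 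The internal differentials are then inherited from $\Rpp$ (with $Rx$ carrying trivial differential): on the Hochschild-degree-$0$ row, $\delta x^n$ is $p x^{n-1}$ for $n$ odd and $0$ for $n$ even, while on the Hochschild-degree-$1$ row, $\delta(x^{n-1}\otimes x) = (\delta x^{n-1})\otimes x$ is $p$ for $n$ even and $0$ for $n$ odd. Arranging the resulting complex with Hochschild degree vertically (row $0$ at the bottom, row $1$ at the top) and internal degree horizontally, one reads off exactly the displayed bicomplex.

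The main obstacle is the second step, namely confirming that the $2$-term sequence really is a DG-bimodule resolution of $A$. The graded-level Koszul exactness is standard, and the chain-map checks for $d$ and $\mu$ are short, but this is the step at which the fact that $\delta x = p$ rather than $0$ could in principle obstruct exactness, so careful sign bookkeeping is required. Once this is established, the identification of the resulting small bicomplex with the displayed one is a routine degree-by-degree calculation.
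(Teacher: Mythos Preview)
Your proposal is correct and takes essentially the same approach as the paper: both use the two-term Koszul bimodule resolution of a tensor algebra to replace the bar bicomplex, and then evaluate the resulting maps explicitly for $A=\Rpp$. The only difference is packaging---the paper invokes Loday's general formula \cite[5.3.8]{Lod} for $\tilde\delta$ (whose extra term $\varphi(a\otimes\delta v)$ vanishes here since $\varphi(a\otimes 1)=0$), whereas you rebuild that formula from scratch by checking directly that the Koszul resolution is compatible with the DG structure.
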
 
 \begin{proof}
   Note that the underlying graded algebra of $\Rpp$ is a tensor algebra $T_R(V)$ over the free graded $R$-module $V=\langle x \rangle$ with $x$ in degree $1$. For dg-$R$-algebras $A$ with underlying graded tensor algebra $T_R(V)$, the Hochschild bicomplex has a simplified description, \cite[5.3.8]{Lod}: It is quasi-isomorphic to 
   \[
   \xymatrixrowsep{1.5em}
   \xymatrixcolsep{2em}
   \xymatrix{  
   (A\otimes V)_0 \ar_{b}[d]& (A\otimes V)_1\ar_-{\tilde \delta}[l]\ar_{b}[d] & (A\otimes V)_2 \ar_-{\tilde\delta}[l]\ar_{b}[d] & \cdots \ar_-{\tilde\delta}[l]\\
   A_0 & A_1\ar_-{\delta}[l] & A_2 \ar_-{\delta}[l] & \cdots, \ar_-{\delta}[l]
   }
   \]
   where all tensor products are over $R$, and the maps are as follows: $\delta$ is the differential of $A$ and $b(a\otimes v)= [a,v]$. (Note that this is a graded commutator!) For $\tilde\delta$, consider first the map
   \begin{align*}
     \varphi: A \otimes A & \longrightarrow A \otimes V\\
     a\otimes (v_1\otimes \cdots \otimes v_n) & \longmapsto \sum_{i=1}^n \pm \left( v_{i+1}\otimes\cdots\otimes v_n\otimes a\otimes v_1 \otimes \cdots \otimes v_{i-1}\right) \otimes v_i\\
     a\otimes 1&\longmapsto 0,
   \end{align*}
   with the sign determined by the Koszul convention. Then $\tilde\delta$ is given by
   \[
     \tilde\delta (a\otimes v)= \delta a\otimes v + (-1)^{|a|}\varphi(a\otimes \delta v).
   \]
   In our case, $V=\langle x \rangle$ with $|x|=1$. Hence $A_n=\langle x^n\rangle$, and for $n>0$, $(A\otimes V)_n=\langle x^{n-1}\otimes x\rangle$. One can then easily identify the maps: We already know $\delta$. For $b$, we have
   \[
     b(x^{n-1}\otimes x)=[x^{n-1},x]=x^n - (-1)^{n-1}x^n=\begin{cases}2 x^n & n \textup{ even} \\ 0 & n \textup{ odd,} \end{cases}
   \]
   and for $\tilde\delta$,
   \[
     \tilde\delta (x^{n-1}\otimes x)=\delta x^{n-1}\otimes x + (-1)^{n-1}\varphi(x^{n-1}\otimes 
     p)= \delta x^{n-1}\otimes x =\begin{cases} p x^{n-2}\otimes x & n \textup{ even} \\ 0 & n \textup{ odd.} \end{cases}
   \]
 \end{proof}

 \begin{corollary} \label{cor: Hochschild}
  The total complex of the bicomplex of Proposition \ref{Hochschildsmall} that computes Hochschild homology is given by
  \[
  \xymatrixrowsep{1.5em}
  \xymatrixcolsep{3em}
  \xymatrix{ 
    R &\ar_{p}[l] R &\ar_{0}[l] R^2 &\ar_{\left(\begin{smallmatrix}p & 2 \\ 0 & p\end{smallmatrix}\right)}[l] R^2&\ar_{0}[l] R^2 &\ar_{\left(\begin{smallmatrix}p & 2 \\ 0 & p\end{smallmatrix}\right)}[l] \cdots,
  }
  \]
  and if $R$ has no $2$-torsion, the Hochschild homology is 
  \[
    \HH_i(\Rpp)=\begin{cases}R/p & i=0 \\ R/p^2 & i>0 \textup{ even} \\ 0 & \textup{else.}\end{cases}
  \]
  \end{corollary}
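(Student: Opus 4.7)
My plan splits the proof into two steps: assembling the total complex, then reading off the homology.

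First, I would form the total complex of the bicomplex in Proposition \ref{Hochschildsmall}. Since $(A \otimes V)_0 = 0$, only position $(0, 0)$ contributes to degree $0$, so the total complex has terms $R, R, R^2, R^2, R^2, \ldots$ in degrees $0, 1, 2, 3, 4, \ldots$. The horizontal maps $\delta, \tilde\delta$ and the vertical map $b$ each exhibit a two-periodic alternation between a nonzero value ($p$, $p$, and $2$ respectively) and zero. With the standard totalization sign convention, a routine bookkeeping of signs shows that the total differentials are $d_1 = p$, $d_n = 0$ for $n \geq 2$ even, and (up to a basis change of $R^2$ absorbing any extra signs) $d_n = \left(\begin{smallmatrix} p & 2 \\ 0 & p \end{smallmatrix}\right) =: M$ for $n \geq 3$ odd.

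With the total complex in hand, the homology is elementary. One has $\HH_0 = \coker(p \colon R \to R) = R/p$, and $\HH_1 = \ker(p) = 0$ by $p$-torsion-freeness. In higher degrees the homology alternates between $\ker(M)$ and $\coker(M)$. Injectivity of $M$ follows at once from $p$-torsion-freeness: if $M(a, b) = 0$ then $pb = 0$ so $b = 0$, and then $pa = 0$ so $a = 0$. To identify $\coker(M)$ with $R/p^2$, I would use Smith normal form over $\ZZ$: since $\gcd(p, 2) = 1$ for odd $p$, Bezout gives $u, v \in \ZZ$ with $up + 2v = 1$, and then the matrices $P = \left(\begin{smallmatrix} u & -2 \\ v & p \end{smallmatrix}\right)$ and $Q = \left(\begin{smallmatrix} 1 & 0 \\ -pv & 1 \end{smallmatrix}\right)$, both lying in $\mathrm{GL}_2(\ZZ)$, satisfy $Q M P = \mathrm{diag}(1, p^2)$. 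These integral matrices act on $R^2$ over any $\ZZ$-algebra, yielding $\coker(M) \cong R/p^2$.

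The only genuinely delicate point is the sign-tracking in the first step; fortunately any sign variant of $M$ is conjugate to $M$ itself by an invertible diagonal matrix over $\ZZ$, and therefore yields the same kernel and cokernel, so the homology calculation is insensitive to convention choices. I note also that the Smith normal form argument does not use the ``no $2$-torsion'' assumption, suggesting the hypothesis is stated for safety and uniformity with later use rather than being essential for this step.
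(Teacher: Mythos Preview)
Your proposal is correct and follows essentially the same approach as the paper: form the total complex directly and then reduce the matrix $\left(\begin{smallmatrix} p & 2 \\ 0 & p \end{smallmatrix}\right)$ to Smith normal form $\mathrm{diag}(1,p^2)$ over $\ZZ$, which is exactly what the paper does (with the specific Bezout choice $u=1$, $v=-\lfloor p/2\rfloor$). Your observation that the ``no $2$-torsion'' hypothesis is not actually used in this argument is also correct.
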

  \begin{proof}
    The total complex is immediate. For the Hochschild homology, note that if there is no $2$-torsion and $p$ is odd, the Smith normal form of $\left(\begin{smallmatrix}p & 2 \\ 0 & p\end{smallmatrix}\right)$ is calculated as
    \[
      \begin{pmatrix}p & 2 \\ 0 & p\end{pmatrix} \Rightarrow \begin{pmatrix}1 & 2 \\ -\lfloor\frac{p}{2}\rfloor p & p\end{pmatrix} \Rightarrow \begin{pmatrix}1 & 0 \\ 0 & 2\lfloor\frac{p}{2}\rfloor p + p\end{pmatrix} = \begin{pmatrix}1 & 0 \\ 0 & p^2\end{pmatrix},
    \]
    Where $\lfloor\frac{p}{2}\rfloor=\frac{p-1}{2}$ as $p$ is odd.

    After dropping the summands that are matched by identities, the total complex becomes
    \[
    \xymatrixrowsep{1.5em}
    \xymatrixcolsep{3em}
    \xymatrix{ 
      R &\ar_{p}[l] R &\ar_{0}[l] R &\ar_{p^2}[l] R&\ar_{0}[l] R &\ar_{p^2}[l] \cdots,
    }
    \]
    so without $p$-torsion, the result follows. (If $p=2$, then the differential instead becomes $\left(\begin{smallmatrix}p & 0 \\ 0 & p\end{smallmatrix}\right)$, and $R/p^2$ is replaced by $R/p\oplus R/p$.)
  \end{proof}

For the cyclic complex, there is again a simplified description based on the simplified Hochschild complex \cite[5.3.9]{Lod}: For a dg-$R$-algebra $A$ with underlying graded tensor algebra $T_R(V)$, the Connes operator can be identified as
\begin{align*}
  \tilde B=\begin{pmatrix}0 & 0 \\\gamma & 0\end{pmatrix}:A_n\oplus (A\otimes V)_{n-1} & \longrightarrow A_{n+1}\oplus (A\otimes V)_n 
\end{align*}
with $\gamma(a)=\varphi(1\otimes a)$, where $\varphi:A\otimes A\to A\otimes V$ is the map from above. 

Untangeling definitions for our setting, we obtain
\begin{align*}
  \gamma: (\Rpp)_n=\langle x^n\rangle & \longrightarrow \langle x^{n-1}\otimes x\rangle= (\Rpp\otimes \langle x\rangle)_n\\
  x^n & \longmapsto \sum_{i=1}^n (-1)^{(n-i)i} x^{n-1}\otimes x = \begin{cases}nx^{n-1}\otimes x & n \textup{ odd}\\ 0 & n \textup{ even.}\end{cases}
\end{align*}

This lets us describe the cyclic bicomplex for $\Rpp$:
\begin{proposition} \label{prop: bicomplex}
  Let $p$ be odd, $R$ without $p$-torsion and $\frac{1}{2}\in R$. Then the cyclic bicomplex for the $R$-algebra $\Rpp$ is quasi-isomorphic to
  \[
  \xymatrixrowsep{1.5em}
  \xymatrixcolsep{2em}
  \xymatrix{
   \vdots  \ar_{p^2}[d] &  \vdots  \ar_{0}[d] &  \vdots  \ar_{p^2}[d] &  \vdots  \ar_{0}[d] &  \vdots  \ar_{p}[d] &  \\
  R \ar_{0}[d]& R \ar_{3}[l] \ar_{p^2}[d] & R\ar_{0}[l]\ar_{0}[d]& R\ar_{1}[l]\ar_{p}[d] & R\ar_{0}[l]  \ar@{.}[ur] \\
  R \ar_{p^2}[d]&R\ar_{0}[l]\ar_{0}[d] & \ar_{1}[l] R \ar_{p}[d] & R\ar_{0}[l] \\
  R\ar_{0}[d]&R\ar_{1}[l]\ar_{p}[d]&R\ar_{0}[l] \\
  R\ar_{p}[d] & R \ar_{0}[l] \\
  R
  }
  \]
  where the bottom $R$ is in bidegree $(0,0)$. Likewise, one obtains the periodic bicomplex, by continuing this to the left, and the negative bicomplex, by dropping the positive-degree columns from the periodic complex.
\end{proposition}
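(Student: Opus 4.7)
The plan is to obtain the displayed bicomplex by a column-by-column reduction of the cyclic bicomplex, transporting the Connes operator $B$ through the reduction.

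First, by the discussion preceding the proposition, the (Connes-style) cyclic bicomplex for $\Rpp$ has columns that are shifted copies of the Hochschild total complex from Proposition~\ref{Hochschildsmall}: column $p$ is the Hochschild complex of $\Rpp$ placed so that its degree-zero entry sits at $(p,p)$, so that $\mathcal{B}_{p,q}=C_{q-p}$ for $q\geq p\geq 0$ (and $0$ otherwise). The vertical differential is Hochschild and the horizontal differential is $\tilde{B}$. This explains the staircase shape: column $p$ is empty below the main diagonal.

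Next, apply Corollary~\ref{cor: Hochschild} to each column. Each Hochschild total complex is quasi-isomorphic, via Smith normal form of the matrix $M=\left(\begin{smallmatrix}p&2\\0&p\end{smallmatrix}\right)$ and removal of the identity-matched summands, to the complex
\[
R \xleftarrow{p} R \xleftarrow{0} R \xleftarrow{p^2} R \xleftarrow{0} R \xleftarrow{p^2} \cdots.
\]
These simplified columns provide the vertical entries and differentials in the displayed bicomplex.

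The heart of the proof is to transport $B$ through these column reductions. On the unreduced Hochschild complex, the only nonzero component of $\tilde{B}$ is the map $\gamma(x^n)=n(x^{n-1}\otimes x)$ for $n$ odd. A convenient choice of reduced basis is $b_0=1$, $b_1=x$, and for $k\geq 1$
\[
b_{2k}=(-2)^{k-1}(x^{2k-2}\otimes x),\qquad b_{2k+1}=(-2)^{k}x^{2k+1}+(-2)^{k-1}p(x^{2k-1}\otimes x).
\]
A direct computation using $\delta$, $b$, $\tilde\delta$, and $\gamma$ then verifies $d(b_{2k+1})=p^{2}b_{2k}$, $B(b_{2k+1})=(2k+1)\,b_{2k+2}$, and $B(b_{2k})=0$, which are precisely the vertical and horizontal labels in the display (including the odd numbers $1, 3, 5,\ldots$ on successive $B$-arrows).

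The main obstacle is bookkeeping rather than any deep input: one has to assemble the columnwise simplifications into a chain map of bicomplexes, which requires choosing free rank-one complements in each $C_n$ such that the Hochschild differential acts as identity on them and $B$ vanishes on them. The scalars $(-2)^{k-1}$ appearing in the $b_n$ must generate free rank-one $R$-summands, which forces the hypothesis $\tfrac{1}{2}\in R$; granted this, the complements exist and are constructed by small linear-algebra computations analogous to those in the proof of Corollary~\ref{cor: Hochschild}. The periodic and negative bicomplexes arise from the same reduction applied to the corresponding extensions and truncations.
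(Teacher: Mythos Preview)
Your proposal is correct and follows essentially the same route as the paper. Both arguments start from Loday's simplified cyclic bicomplex (columns given by the two-row Hochschild bicomplex of Proposition~\ref{Hochschildsmall}), perform a change of basis in each $R^2$-entry so that an acyclic summand with identity differential and vanishing $B$ splits off, and read off the resulting horizontal and vertical maps on the surviving rank-one pieces. Your generators are unit multiples of the paper's: in the paper's notation one checks $b_{2k+1}=-(-2)^k\alpha_{2k+1}$ and $b_{2k}=-(-2)^k\beta_{2k}$. The only cosmetic difference is that your normalisation absorbs the stray unit: you obtain the horizontal label $2k+1$ directly, whereas the paper gets $-2(2k+1)$ and dismisses the factor $-2$ at the end. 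Your last paragraph is a little informal, but since you have written down the $b_n$ explicitly and the obvious complements $c_{2k+1}=x^{2k-1}\otimes x$, $c_{2k}=2x^{2k}+p(x^{2k-2}\otimes x)$ visibly satisfy $d(c_{2k+1})=c_{2k}$ and $B(c_{\bullet})=0$, the splitting is immediate. (One notational quibble: avoid using $p$ for the column index, since $p$ is already the prime.)
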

\begin{proof}
  By plugging the previous calculations into Loday's bicomplex, we obtain
  \[
  \xymatrixrowsep{1.5em}
  \xymatrixcolsep{2em}
  \xymatrix{
   \vdots  \ar_{\pi}[d] &  \vdots  \ar_{0}[d] &  \vdots  \ar_{\pi}[d] &  \vdots  \ar_{0}[d] &  \vdots  \ar_{p}[d] &\\
  R^2 \ar_{0}[d]& R^2 \ar_{B_3}[l] \ar_{\pi}[d] & R^2\ar_{0}[l]\ar_{0}[d]& R\ar_{B_1}[l]\ar_{p}[d] & R\ar_{0}[l]  \ar@{.}[ur] \\
  R^2 \ar_{\pi}[d]&R^2\ar_{0}[l]\ar_{0}[d] & \ar_{B_1}[l] R \ar_{p}[d] & R\ar_{0}[l] \\
  R^2\ar_{0}[d]&R\ar_{B_1}[l]\ar_{p}[d]&R\ar_{0}[l] \\
  R\ar_{p}[d] & R \ar_{0}[l]\\
  R,
  }
  \]
  where $\pi=\left(\begin{smallmatrix}p & 2 \\ 0 & p\end{smallmatrix}\right)$ and $B_k=\left(\begin{smallmatrix}0&0 \\ k & 0\end{smallmatrix}\right)$ for odd $k>1$, and $B_1=\left(\begin{smallmatrix}0\\1\end{smallmatrix}\right)$.

  From this, we can eliminate an acyclic subcomplex: Consider one of the $R^2$-entries in an odd total degree. This has generators $a_n=x^n$ and $b_n=x^{n-2}\otimes x$. Fix now $n$ odd, and define a new basis in degree $n$ as
  \[
     \alpha_n=\frac{p}{2}b_n-a_n,\hspace{2em} \beta_n=\frac{1}{2}b_n.
  \]
  In the even degree $n-1$, we define
  \[
    \alpha_{n-1}=a_{n-1}+\frac{p}{2}b_{n-1},\hspace{2em} \beta_{n-1}=\frac{1}{2}b_{n-1}.
  \]
  Then the vertical differential becomes
  \begin{align*}
    \pi(\alpha_n)&=\frac{p}{2}\pi(b_n)-\pi(a_n)=\frac{p}{2}(2a_{n-1}+pb_{n-1})-pa_{n-1}=p^2\beta_{n-1},\\
    \pi(\beta_n)&=\frac{1}{2}(2a_{n-1}+pb_{n-1})= \alpha_{n-1}.
  \end{align*}
  The horizontal differential is
  \begin{align*}
    B_n(\alpha_n)&=\frac{p}{2}B_n(b_n)-B_n(a_n)=-nb_{n-1}=-2n\beta_{n-1},\\
    B_n(\beta_n)&=0.
  \end{align*}
  In even total degree, all differentials vanish. Therefore all odd-degree $\beta_n$ and even-degree $\alpha_{n-1}$ split off as an acyclic subcomplex, and the remaining generators are mapped as described in the proposition, up to a multiplication by the unit $-2$.
\end{proof}

This allows for a more explicit description of the cyclic homology, periodic homology, and negative cyclic homology of $\Rpp$ over $R$: 
Note first that for all three bicomplexes, the total differentials from even to odd total degrees are $0$. If $R$ has no $n$-torsion for all positive $n$, the total differentials from odd to even degrees are injective. Hence we only need to compute the cokernels of the latter differentials. Explicitly, we have the following:

\begin{proposition}\label{cyclic}
  Let $p$ be odd, $R$ without $n$-torsion  for all positive $n$, and $\frac{1}{2}\in R$. Then $\HC_0(\Rpp)=R/p$, $\HC_i(\Rpp)=0$ for odd $i$, and for even positive $i$, $\HC_i(\Rpp)$ is the cokernel of the map
  \[ R^{\frac{i}{2}+1} \to R^{\frac{i}{2}+1} \]
  sending $(x_1, x_2, \dots, x_{\frac{i}{2}+1})$ to $(px_1, x_1+p^2x_2, 3x_2+p^2x_3, \dots, (i-1)x_{\frac{i}{2}} +p^2x_{\frac{i}{2}+1})$.

  This can be identified with the colimit of the following diagram of $R$-modules: 
  \[
\xymatrixrowsep{1.5em}
\xymatrixcolsep{2em}
\xymatrix{
0 & \ar[l] R \ar_{p^2}[d] \\
&R & R \ar_{i-1}[l] \ar_{p^2}[d] \\
&& R & \ar_{i-3}[l] R  \ar@{.}[dr] \\
&&&&R & R \ar_{5}[l] \ar_{p^2}[d] \\
&&&&&R & R \ar_{3}[l] \ar_{p^2}[d] \\
&&&&&&R & \ar_{1}[l] R \ar_{p}[d] \\
&&&&&&&R
}
\]
\end{proposition}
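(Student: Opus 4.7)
The plan is to read cyclic homology off the bicomplex of Proposition \ref{prop: bicomplex}. A key observation is that every label in that bicomplex depends only on the parity of $n-k$: the vertical differential at $(k,n) \to (k,n-1)$ vanishes when $n-k$ is even and equals a power of $p$ when $n-k$ is odd, while the horizontal differential at $(k,n) \to (k-1,n)$ equals $n-k$ when $n-k$ is odd and vanishes when $n-k$ is even. Since $k+n$ and $n-k$ have the same parity, every outgoing differential from an entry in even total degree is zero, so $d_{2j}$ vanishes in the total complex, giving
\[
\HC_{2j}(\Rpp) = \coker(d_{2j+1}), \qquad \HC_{2j+1}(\Rpp) = \ker(d_{2j+1}).
\]

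Next I would describe $d_{2j+1}$ explicitly. In total degree $2j+1$ the rank $j+1$ module is spanned by elements $x_l$ placed at bidegree $(j+1-l,\, j+l)$ for $l = 1,\dots,j+1$, and in total degree $2j$ it is spanned by $y_l$ placed at $(j+1-l,\, j-1+l)$. Reading labels from Proposition \ref{prop: bicomplex}, the vertical differential sends $x_l$ to $p \cdot y_1$ when $l = 1$ and to $p^2 \cdot y_l$ for $l \geq 2$, while the horizontal differential sends $x_l$ to $(2l-1) \cdot y_{l+1}$ for $l \leq j$. Koszul signs only amount to unit factors, which do not affect the cokernel, so the resulting matrix agrees with the one in the statement. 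For $i = 0$ this matrix is just multiplication by $p$ on $R$, giving $R/p$; for odd $i = 2j+1$ the matrix is lower bidiagonal with determinant $p^{2j+1}$, a non-zero-divisor under the hypothesis on $R$, so $d_{2j+1}$ is injective and $\HC_{2j+1}(\Rpp) = 0$. The main subtlety here is the consistent bookkeeping of indices and Koszul signs between the bicomplex and its total complex.

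Finally, for the colimit description: by direct inspection, the colimit of the displayed zigzag is the quotient of $R^{j+1}$ (the modules sitting on the diagonal of the staircase) by the $j+1$ relations $p^2 m_1 = 0$, $(2j-1)\, m_1 = p^2 m_2$, $\dots$, $3\, m_{j-1} = p^2 m_j$, and $m_j = p\, m_{j+1}$ arising from the top corner and from each span in the diagram. The resulting upper-bidiagonal presentation matrix is taken to the lower-bidiagonal matrix of the first part of the proposition by reversing the order of the basis on both source and target and absorbing the residual signs via unit row operations; since these operations preserve cokernels, we obtain the required identification, realised explicitly by $m_l \leftrightarrow y_{j+2-l}$.
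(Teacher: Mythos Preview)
Your argument is correct and follows the same approach as the paper. The paper itself does not give a proof of this proposition beyond the preceding sentence noting that outgoing differentials from even total degree vanish and that the odd-to-even differentials are injective; you have correctly fleshed out the bookkeeping, matched the matrix entries with the labels in Proposition~\ref{prop: bicomplex}, and verified the colimit identification via the evident change of basis.
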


In the periodic situation, this diagram just continues to the left. However, there is a subtlety: Periodic and negative cyclic homology are computed by taking the product totalization of the respective bicomplexes \cite[5.1.2]{Lod}.

\begin{proposition}\label{periodic}
  Let $p$ be odd, $R$ without $n$-torsion  for all positive $n$, and $\frac{1}{2}\in R$. Then $\HP_i(\Rpp)=0$ for odd $i$, and for even $i$, $\HP_i(\Rpp)$ is the cokernel of the map
\[\prod_{\mathbb{N}} R \to \prod_{\mathbb{N}} R \]
sending $(x_1, x_2, \dots)$ to $(px_1, x_1+p^2x_2, 3x_2+p^2x_3, 5x_3+p^2x_4, \dots)$. 

  This can be identified with a completion of the colimit of the following diagram of $R$-modules (see \cite[5.1.9]{Lod}): 
  \[
\xymatrixrowsep{1.5em}
\xymatrixcolsep{2em}
\xymatrix{
 \ar@{.}[dr]  &\\
&R & R \ar_{5}[l] \ar_{p^2}[d] \\
&&R & R \ar_{3}[l] \ar_{p^2}[d] \\
&&&R & \ar_{1}[l] R \ar_{p}[d] \\
&&&&R
}
\]
\end{proposition}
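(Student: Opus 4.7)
The plan is to extend the analysis in the proof of Proposition~\ref{cyclic} to the periodic bicomplex. By \cite[5.1.2]{Lod}, the periodic bicomplex is the doubly-infinite extension of the cyclic bicomplex of Proposition~\ref{prop: bicomplex} (obtained by adjoining columns at $p<0$), and $\HP_\ast$ is computed by its product totalization. First I would check that the change of basis used to simplify the cyclic bicomplex in the proof of Proposition~\ref{prop: bicomplex} is performed bidegree-locally, and hence extends unchanged to the periodic setting. The resulting simplified periodic bicomplex then carries a single copy of $R$ at each bidegree $(p,q)$ with $q\ge 0$ and $p\le q$, with the same horizontal and vertical coefficients as in Proposition~\ref{prop: bicomplex}.

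For the product totalization at total degree $n$, the contributing bidegrees are $(p,q)=(n-q,q)$ for $q\ge \max(0,\lceil n/2\rceil)$, forming a countable set, so we obtain $\prod_{\mathbb{N}} R$. The parity observation underlying Proposition~\ref{cyclic} carries through: the coefficient of each horizontal or vertical differential leaving $(p,q)$ depends only on the parity of $q-p=n-2p$. For even $n$ every such coefficient is zero, so the total differential $d_n$ vanishes; for odd $n$ every such coefficient is nonzero, with the horizontal one equal to $q-p$ and the vertical one equal to $p$ or $p^2$ as catalogued in Proposition~\ref{prop: bicomplex}. Since each target bidegree receives contributions from at most two source neighbours, one vertical and one horizontal, these assemble into a well-defined map $\prod_{\mathbb{N}} R \to \prod_{\mathbb{N}} R$.

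Re-indexing source and target by distance from the diagonal produces exactly the map $(x_1, x_2, \dots)\mapsto (p x_1, x_1+p^2 x_2, 3 x_2+p^2 x_3,\dots)$ asserted in the proposition. The no-torsion hypothesis yields injectivity by a straightforward induction: $p x_1=0$ forces $x_1=0$, then $x_1+p^2 x_2=0$ forces $x_2=0$, and so on. Consequently $\HP_n=0$ for odd $n$, and $\HP_n$ is the cokernel of this map for even $n$, which is the first claim of the proposition.

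Finally, to identify this cokernel with the completion of the colimit of the zig-zag diagram, I would invoke \cite[5.1.9]{Lod}: the colimit computes the cokernel of the analogous map with $\bigoplus_{\mathbb{N}} R$ in place of $\prod_{\mathbb{N}} R$, and passing from direct sums to products corresponds to completion with respect to the horizontal-degree filtration. The main technical point will be to verify carefully that the change of basis of Proposition~\ref{prop: bicomplex} carries over verbatim to the doubly-infinite bicomplex (the acyclic summands must continue to split off uniformly) and remains compatible with the product totalization; the remainder is just bookkeeping to translate between the three descriptions.
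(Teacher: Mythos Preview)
Your proposal is correct and follows the same approach as the paper. In fact, the paper gives no separate proof of this proposition: it is stated as an immediate consequence of the simplified bicomplex in Proposition~\ref{prop: bicomplex} together with the short paragraph preceding Propositions~\ref{cyclic}--\ref{negative}, which already records the parity observation (zero differentials from even to odd total degree) and the injectivity coming from the no-torsion hypothesis. Your write-up simply makes these implicit steps explicit, including the bookkeeping for the product totalization and the appeal to \cite[5.1.9]{Lod} for the completion statement.
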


For negative cyclic homology, the periodic diagram is truncated at the other end:
\begin{proposition}\label{negative}
  Let $p$ be odd, $R$ without $n$-torsion  for all positive $n$, and $\frac{1}{2}\in R$. Then $\HC^-_i(\Rpp)=\HP_0(\Rpp)$, for $i \leq 0$, and $\HC^-_i(\Rpp)=0$ for odd $i$, and for even positive $i$, $\HC^-_i(\Rpp)$ is the cokernel of the map
\[\prod_{\mathbb{N}} R \to \prod_{\mathbb{N}} R \]
sending $(x_1, x_2, \dots)$ to $(p^2x_1, (i+1)x_1+p^2x_2, (i+3)x_2+p^2x_3, (i+5)x_3+p^2x_4, \dots)$. 

   This can be identified with a completion of the colimit of the following diagram of $R$-modules (see \cite[5.1.9]{Lod}): 
   \[
\xymatrixrowsep{1.5em}
\xymatrixcolsep{2em}
\xymatrix{
 \ar@{.}[dr]  \\
&R & R \ar_{i+5}[l] \ar_{p^2}[d] \\
&&R & R \ar_{i+3}[l] \ar_{p^2}[d] \\
&&&R & \ar_{i+1}[l] R \ar_{p^2}[d] \\
&&&&R
}
\]
\end{proposition}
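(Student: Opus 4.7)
The plan is to mirror the arguments of Propositions \ref{cyclic} and \ref{periodic}. First, produce the negative cyclic bicomplex by extending the simplified cyclic bicomplex of Proposition \ref{prop: bicomplex} to the left to obtain the periodic bicomplex, then dropping every column of strictly positive horizontal degree. By definition, negative cyclic homology is the cohomology of the \emph{product} totalization of this bicomplex (\cite[5.1.2]{Lod}), so the task reduces to computing cokernels of the odd-to-even totalization differentials. Since every entry of the bicomplex is a copy of $R$ and each differential is multiplication by a non-zero integer or a power of $p$, the no-torsion hypothesis forces each odd-to-even totalization differential to be injective, giving $\HC^-_i(\Rpp)=0$ for odd $i$.

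For even $i>0$, the totalization in degrees $i$ and $i-1$ is a countable product of copies of $R$, indexed by the staircase going up and to the left as in the diagram of the proposition. Reading off the labels in Proposition \ref{prop: bicomplex}, the vertical differentials along this staircase are uniformly multiplication by $p^2$, and the successive horizontal differentials are multiplication by $i+1,\,i+3,\,i+5,\ldots$. Assembling the two contributions into the totalization differential yields exactly the map
\[
(x_1,x_2,\ldots)\longmapsto\bigl(p^2 x_1,\,(i+1)x_1+p^2 x_2,\,(i+3)x_2+p^2 x_3,\,\ldots\bigr),
\]
and its cokernel is $\HC^-_i(\Rpp)$. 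Identifying this cokernel with a completion of the pictured colimit proceeds exactly as in Proposition \ref{periodic} via \cite[5.1.9]{Lod}.

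For non-positive even $i$ the idea is to compare with the periodic bicomplex. When $i\leq -2$, the dropped columns contribute only in strictly positive total degree, so the negative cyclic and periodic totalizations literally agree in degrees $i-1$, $i$, and $i+1$; hence $\HC^-_i=\HP_i=\HP_0$ by the $2$-periodicity of $\HP$. The remaining case $i=0$ requires a small extra argument: the only entry of $C^{\mathrm{per}}_1$ absent from $C^-_1$ is the $R$ in bidegree $(1,0)$, and its horizontal differential is multiplication by $p$ into the generator at $(0,0)$. But this same element $p\in R$ is already in the image of the vertical differential from the entry at $(0,1)$, so the omitted generator does not enlarge the image of the boundary, and $\HC^-_0 = \HP_0$.

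The main technical point is this final comparison in the case $i=0$, which requires a careful diagram chase in the corner of the bicomplex to confirm that the dropped generator of $C^{\mathrm{per}}_1$ is redundant. All other parts of the argument are direct read-offs from Proposition \ref{prop: bicomplex} parallel to what has already been done for the cyclic and periodic cases, with the only subtlety being the use of the product rather than the direct sum totalization and the resulting need to invoke \cite[5.1.9]{Lod} when rewriting the cokernel as a completion of a colimit.
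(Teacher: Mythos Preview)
Your approach matches the paper's: read off the product-totalization differentials from the simplified bicomplex of Proposition~\ref{prop: bicomplex} and identify the cokernels, exactly as for Propositions~\ref{cyclic} and~\ref{periodic} (the paper gives no separate proof beyond the preamble preceding those three propositions). The computation for even $i>0$ and the vanishing in odd degrees are correct.

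There is, however, a small confusion in your treatment of $i=0$. In the simplified $(b,B)$-bicomplex of Proposition~\ref{prop: bicomplex}, column $p$ has entries only in rows $q\geq p$; in particular there is \emph{no} entry at bidegree $(1,0)$, and the lowest entry of column~$1$ is at $(1,1)$, which lies in total degree $2$ (with horizontal map to $(0,1)$ equal to multiplication by $0$, not by $p$). Consequently the positive columns contribute only in total degrees $\geq 2$, so the negative and periodic product totalizations already agree in degrees $-1,\,0,\,1$, and $\HC^-_0=\HP_0$ follows immediately, with no extra diagram chase needed. Your ``extra argument'' is therefore unnecessary and, as stated, refers to a nonexistent entry; this does not affect the conclusion, but you should correct the justification.
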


The rest of this paper is entirely devoted to identifying these cokernels.

For readability, we will set up some notation for the staircase diagrams and homology groups:

Let $N_{-1}=\HC_0(\Rpp)=R/p$ and for each odd integer $i \geq 1$, and $N_i=\HC_{i+1}(\Rpp)$. By \ref{cyclic}, $N_i$ is the colimit of the following diagram of $R$-modules:
\[
\xymatrixrowsep{1.5em}
\xymatrixcolsep{2em}
\xymatrix{
0 & \ar[l] R \ar_{p^2}[d] \\
&R & R \ar_{i}[l] \ar_{p^2}[d] \\
&& R & \ar_{i-2}[l] R  \ar@{.}[dr]  \\
&&&&R & R \ar_{5}[l] \ar_{p^2}[d] \\
&&&&&R & R \ar_{3}[l] \ar_{p^2}[d] \\
&&&&&&R & \ar_{1}[l] R \ar_{p}[d] \\
&&&&&&&R
}
\]
So that we can refer to the individual copies of $R$ in this diagram more easily, we will use the following labels:
\begin{equation}
\label{staircase-head}
\begin{split}
\xymatrixrowsep{1.5em}
\xymatrixcolsep{2em}
\xymatrix{
0 & \ar[l] R_{i+2}' \ar_{p^2}[d] \\
&R_i & R_i' \ar_{i}[l] \ar_{p^2}[d] \\
&& R_{i-2} & \ar_{i-2}[l] R_{i-2}'  \ar@{.}[dr]  \\
&&&&R_5 & R_5' \ar_{5}[l] \ar_{p^2}[d] \\
&&&&&R_3 & R_3' \ar_{3}[l] \ar_{p^2}[d] \\
&&&&&&R_1 & \ar_{1}[l] R_1' \ar_{p}[d] \\
&&&&&&&R_{-1}
}
\end{split}
\end{equation}
All of the above terms $R_1', R_3', \ldots, R_{-1}, R_1, R_3, \ldots$ are equal to $R$.

We note the natural maps between the $R$-modules $N_i$ and record some of their properties.

\begin{lemma} \label{between heads}
Let $k \geq i \geq 1$ be odd integers, and let $N_k$ and $N_i$ be the colimits defined above.  There is a natural surjective $R$-module homomorphism $\pi_{k,i}: N_k \rightarrow N_i$, and these are compatible in the sense that for $k' \geq k \geq i$, we have $\pi_{k,i} \circ \pi_{k',k} = \pi_{k',i}$.  The kernel of $\pi_{k,i}$ is the $R$-submodule of $N_k$ generated by the images of the natural maps $R_j \rightarrow N_k$ for all $j > i$.
\end{lemma}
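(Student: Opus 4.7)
The plan is to pass from the diagrammatic description of $N_k$ to an explicit $R$-module presentation. Eliminating each primed generator via the horizontal colimit relation $[j]' = j\cdot[j]$ (and $[k+2]' = 0$ coming from the top-row map to $0$), one obtains $N_k$ as the $R$-module generated by symbols $[-1], [1], [3], \ldots, [k]$ modulo the relations $p^2[k] = 0$, $j[j] = p^2[j-2]$ for odd $j$ with $3 \leq j \leq k$, and $[1] = p\cdot[-1]$. The same procedure applied to the $N_i$-diagram yields the identical shape of presentation with $k$ replaced by $i$.

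To construct $\pi_{k,i}$ I would use the universal property of the colimit by exhibiting a cocone from the $N_k$-diagram into $N_i$. The cocone sends each module $R_j$ or $R_j'$ already appearing in the $N_i$-diagram to $N_i$ via its canonical colimit map, and sends all other modules (namely $R_j$ for $i+2 \leq j \leq k$ and $R_j'$ for $i+4 \leq j \leq k+2$) to $0$. Cocone commutativity is trivial everywhere except at the transition, where the horizontal map $R_{i+2}' \to R_{i+2}$ (sent to $0$) must agree with the vertical map $R_{i+2}' \to R_i$ (sending $r \mapsto p^2 r\cdot[i]$ in $N_i$). These agree precisely because the top relation of $N_i$ is $p^2[i] = 0$. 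This defines $\pi_{k,i}$, and surjectivity is immediate since every generator of $N_i$ is the image of the correspondingly-named generator of $N_k$. Compositional compatibility $\pi_{k,i}\circ \pi_{k',k} = \pi_{k',i}$ then follows from the uniqueness clause of the universal property (or directly, since both maps agree on generators).

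For the kernel, let $K \subseteq N_k$ denote the $R$-submodule generated by the images of $R_j \to N_k$ for all odd $j > i$; the inclusion $K \subseteq \ker \pi_{k,i}$ is immediate from the definition. For the reverse inclusion I would quotient the presentation of $N_k$ by the additional relations $[j] = 0$ for all odd $j$ with $j > i$. Under these relations, the relation $p^2[k] = 0$ and every equation $j[j] = p^2[j-2]$ with $j > i+2$ become trivial, while the equation $(i+2)[i+2] = p^2[i]$ collapses precisely to $p^2[i] = 0$; the remaining relations match those of $N_i$ verbatim. Hence $\pi_{k,i}$ descends to an isomorphism $N_k/K \xrightarrow{\cong} N_i$, forcing $\ker \pi_{k,i} = K$ as claimed.

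The main obstacle is purely bookkeeping: matching the two presentations and verifying that the top relation of $N_k$ degenerates correctly as one quotients down to $N_i$. No substantive calculation is required beyond this.
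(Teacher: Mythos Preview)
Your proof is correct and follows essentially the same approach as the paper: both arguments pass to the explicit presentation of the colimit as a quotient of a direct sum (the paper citing the Stacks Project description, you eliminating the primed generators to obtain a presentation by the $[j]$'s) and then observe that projecting off the $R_j$ with $j>i$ recovers exactly the presentation of $N_i$. The paper's proof is a two-sentence sketch that leaves the kernel verification as ``follows easily''; you have simply filled in those details. One small remark: at the transition, the cocone condition you need is that the assigned map $R_{i+2}'\to N_i$ (which you take to be the canonical colimit map, equal to $r\mapsto p^2 r\,[i]$) is compatible with the arrow $R_{i+2}'\to R_{i+2}$, whose target you send to $0$; this forces $p^2[i]=0$ in $N_i$, exactly as you say, but it is a single compatibility check rather than two arrows ``agreeing''.
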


\begin{proof}
There is a clear map $N_{k} \rightarrow N_i$ given by mapping the $R_{j}$ and $R'_{j+2}$ factors to zero for all $j > i$, and by mapping the remaining factors to themselves via the identity. Using the explicit description of an (unfiltered) colimit given, for example, in \cite[\href{http://stacks.math.columbia.edu/tag/08QQ}{Tag~00D5}]{stacks-project}, we have that $N_i$ is a quotient of the direct sum $R_{-1} \oplus R_1 \oplus \cdots \oplus R_k \oplus \cdots \oplus R_{i}$, and similarly for $N_k$.  The constructed map $N_k \rightarrow N_i$ is induced by the obvious projection between these direct sums.  The rest of the claims follow easily.
\end{proof}

We let $N_{\infty}$ denote the inverse limit $\varprojlim N_i$.  We let $K_i$ denote the kernel of the projection $N_{\infty} \rightarrow N_i$.  It is immediate from \ref{periodic} and \ref{negative} that $N_\infty=\HP_0(\Rpp)$ and $K_i=\HC_{i+3}^-(\Rpp)$ (see \cite[5.1.5 and 5.1.9]{Lod}). Indeed, this follows since the cyclic homology of $\Rpp$ is even (Proposition \ref{cyclic}) and hence by \cite[5.1.5]{Lod}, we have a short exact sequence
\[0 \to \HC_{l}^-(\Rpp) \to HP_l(\Rpp) \to \HC_{l-2}(\Rpp) \to 0\]
for any even integer $l$. Our goal is therefore to compute these terms $N_i, N_{\infty}, K_i$ as explicitly as possible.

Our explicit description of $N_i$ is quite complicated for general $i$.  (We will see that for certain $i$, such as $i = \frac{p^a \pm 1}{2}$, there is a simple description.)  Our strategy for calculating $N_i$ explicitly is to first construct a related $R$-module, that we call $M_i$, and that has a much more regular description in general than $N_i$.

\section{Computation of a related colimit}  From now on we will work with $p$-torsion-free commutative $\ZZ_{(p)}$-algebras. This will make our formulas and results easier to formulate. There is no restriction of generality since for any $p$-torsion-free $R$, the map $R \to R_{(p)}$ induces an isomorphism $\xymatrix{\HH^R(\Rpp) \ar[r]^-{\cong} & \HH^{R_{(p)}}(\Rppp)}$ by Corollary \ref{cor: Hochschild}. Proposition \ref{prop: bicomplex} then implies that corresponding relative negative, periodic and cyclic homologies are isomorphic. For the rest of the paper $R$ will denote a $p$-torsion-free commutative $\ZZ_{(p)}$-algebra. 

For every odd integer $i \geq 1$, we will prove that the $R$-module $M_i := R \oplus R/1 \oplus R/3 \oplus \cdots \oplus R/i$ is a colimit of the diagram 
\begin{equation}
\label{staircase-regular}
\begin{split}
\xymatrixrowsep{1.5em}
\xymatrixcolsep{2em}
\xymatrix{
R_i & R_i' \ar_{i}[l] \ar_{p^2}[d] \\
& R_{i-2} & \ar_{i-2}[l] R_{i-2}'  \ar@{.}[dr]  \\
&&&R_5 & R_5' \ar_{5}[l] \ar_{p^2}[d] \\
&&&&R_3 & R_3' \ar_{3}[l] \ar_{p^2}[d] \\
&&&&&R_1 & \ar_{1}[l] R_1' \ar_{p}[d] \\
&&&&&&R_{-1}
}
\end{split}
\end{equation}

Here, as in diagram~(\ref{staircase-head}), all these terms $R_i$ and $R_i'$ are equal to $R$.  We use the subscripts so we can refer to specific terms more easily. 
Note that this is the same as the diagram~(\ref{staircase-head}) defining $N_i$, except that the top row of the $N_i$ diagram has been removed.  In particular, we have a surjective map $M_i \twoheadrightarrow N_i$ for every odd $i$.

For the proof that $M_i$ is a colimit of diagram~(\ref{staircase-regular}), we first recursively define two sequences $a_j$ and $b_j$, and prove some inequalities related to them.  Some of these results will be needed immediately, in the proof that $M_i$ is a colimit of diagram~(\ref{staircase-regular}), while other of these results will not be needed until later.  These sequences will also be used to describe the kernel of the surjection $M_i \rightarrow N_i$.

\begin{definition} \label{abdef}
Define $A_1 := p$ and for each odd integer $j \geq 3$, recursively define $A_j \in \QQ$ as $A_j := \frac{p^2 A_{j-2}}{j}$.  Define $B_0 := 1$ and for each even integer $j \geq 2$, define $B_j \in \QQ$ as $B_j := \frac{p^2 B_{j-2}}{j}$.  Define $a_j := v_p(A_j)$ and $b_j := v_p(B_j)$.
\end{definition}

We will see below in Corollary~\ref{AB in Zp} that the elements $A_j$ and $B_j$ defined above in fact lie in $\ZZ_{(p)}$

Note that the sequence $b_j$ is not an increasing sequence.  We can bound the value of $b_j$ as follows.

\begin{lemma} \label{b bounds}
For every even integer $j \geq 0$, the number $b_j$ is equal to $j$ minus the $p$-adic valuation of $(j/2)!$.  In particular, for every even integer $j \geq 2$, we have 
\[
j - \frac{j}{2(p-1)} < b_j.
\]
\end{lemma}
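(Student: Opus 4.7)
The plan is to unwind the recursion defining $B_j$ into a closed form, take $p$-adic valuations, and then invoke Legendre's formula for $\nu_p(n!)$ to get the bound.

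First, I would prove by induction on even $j \geq 0$ the closed formula
\[
B_{2k} \;=\; \frac{p^{2k}}{2^k \, k!}.
\]
The base case $k=0$ is $B_0 = 1$, and the inductive step is immediate from $B_{2k} = p^2 B_{2k-2}/(2k)$, since $p^2/(2k) \cdot p^{2k-2}/(2^{k-1}(k-1)!) = p^{2k}/(2^k k!)$.

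Next, since $p$ is odd, we have $\nu_p(2^k) = 0$, so
\[
b_{2k} \;=\; \nu_p(B_{2k}) \;=\; 2k - \nu_p(k!),
\]
which, writing $j = 2k$, is exactly $b_j = j - \nu_p((j/2)!)$, the first assertion of the lemma. (Note that $B_{2k} \in \ZZ_{(p)}$ is automatic from the formula, but this is not what we are asked to prove here.)

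Finally, for the bound I would apply Legendre's formula $\nu_p(n!) = \sum_{i \geq 1} \lfloor n/p^i \rfloor$. The strict bound $\sum_{i \geq 1} \lfloor n/p^i \rfloor \leq (n-1)/(p-1)$ for $n \geq 1$ follows either directly from Legendre's closed form $\nu_p(n!) = (n - s_p(n))/(p-1)$ (using $s_p(n) \geq 1$) or by summing the geometric series after bounding $\lfloor n/p^i\rfloor \leq n/p^i$ and subtracting off the defect. Applying this with $n = j/2 \geq 1$ yields
\[
\nu_p((j/2)!) \;\leq\; \frac{j/2 - 1}{p-1} \;<\; \frac{j}{2(p-1)},
\]
and substituting into the formula for $b_j$ gives the desired strict inequality $j - j/(2(p-1)) < b_j$.

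The only mild subtlety is making sure the inequality is strict for all $j \geq 2$; this is precisely why one wants the sharper form of Legendre (with $s_p(n) \geq 1$) rather than the naive geometric estimate, since at $j = 2$ the naive bound $\nu_p((j/2)!) < j/(2(p-1))$ still works but the chain of inequalities needs care. Otherwise the argument is a short direct computation.
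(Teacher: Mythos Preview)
Your proof is correct and follows essentially the same approach as the paper. The only cosmetic differences are that you write out the closed form $B_{2k} = p^{2k}/(2^k k!)$ explicitly before taking valuations (the paper just asserts the valuation identity directly from the recursion), and for the strict inequality you invoke the $s_p(n)$ form of Legendre's formula, whereas the paper simply bounds each floor $\lfloor j/(2p^k)\rfloor$ by $j/(2p^k)$ and notes that the inequality is strict because the tail of the floor sum vanishes while the geometric series does not.
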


\begin{proof}
The statement about $(j/2)!$ is clear from the recursive definition of the $b$ sequence.  From this and Legendre's formula, we have for all $j \geq 0$ that
\begin{align*}
b_j &=j-\nu_p((j/2)!) =j - \sum_{k = 1}^{\infty} \left\lfloor \frac{j}{2p^k} \right\rfloor. \\
\intertext{So if $j > 0$, we have}
b_j&> j - \sum_{k = 1}^{\infty} \frac{j}{2p^k} = j - \frac{j}{2(p-1)}.
\end{align*}
This proves the inequality.
\end{proof}

The following coarse lower-bound on $b_j$ will be very useful.

\begin{lemma} \label{padic val}
Assume $p^e \leq \frac{j}{2} + 1$, for an integer $e \geq 0$ and an even integer $j \geq 0$.  Then $b_j \geq e$.
\end{lemma}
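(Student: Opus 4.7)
The plan is to combine the exact formula $b_j = j - \nu_p((j/2)!)$ established in Lemma \ref{b bounds} with two elementary estimates: the standard Legendre bound $\nu_p(n!) \leq n/(p-1)$ and the inequality $p^e \geq e + 1$, valid for all integers $p \geq 2$ and $e \geq 0$.

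First I would dispose of the degenerate case $j = 0$: the hypothesis $p^e \leq 1$ forces $e = 0$, and $b_0 = 0$ since $B_0 = 1$, so the conclusion is immediate.

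For $j \geq 2$, the key observation is that since $p$ is odd we have $p - 1 \geq 2$, and hence Legendre's formula gives
\[
\nu_p((j/2)!) \;\leq\; \frac{j/2}{p-1} \;\leq\; \frac{j}{4}.
\]
Substituting into the formula of Lemma \ref{b bounds} yields $b_j \geq j - j/4 = 3j/4 \geq j/2$. The hypothesis $p^e \leq j/2 + 1$ rearranges to $j/2 \geq p^e - 1$, and an easy induction (base $p^0 = 1$; step $p^{e+1} \geq 2 p^e \geq 2(e+1) \geq e + 2$) gives $p^e \geq e+1$, i.e.\ $p^e - 1 \geq e$. Chaining these inequalities produces
\[
b_j \;\geq\; \tfrac{j}{2} \;\geq\; p^e - 1 \;\geq\; e,
\]
which is the desired bound.

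The main obstacle is essentially nothing; this is routine estimation. In fact, the sharper bound $b_j > j - j/(2(p-1))$ already recorded in Lemma \ref{b bounds} is considerably stronger than what is being asserted here, which suggests that the coarse form stated in Lemma \ref{padic val} is simply the convenient shape that will be invoked in downstream arguments.
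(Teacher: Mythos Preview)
Your proof is correct and follows essentially the same route as the paper: both use the identity $b_j = j - \nu_p((j/2)!)$ together with the Legendre bound and the elementary estimate $p^e \geq e$ (you use the marginally stronger $p^e \geq e+1$). The only cosmetic difference is that the paper's chain $b_j > j - \tfrac{j}{2(p-1)} \geq \tfrac{j}{2}+1 \geq p^e \geq e$ requires $j \geq 4$, so it treats $j=0$ and $j=2$ as base cases, whereas your rearranged chain $b_j \geq \tfrac{j}{2} \geq p^e - 1 \geq e$ already covers $j=2$.
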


\begin{proof}
The claim holds if $j = 0$ and $j=2$, and for $j > 2$, we have $b_j > j - \frac{j}{2(p-1)} \geq \frac{j}{2}+1 \geq p^e \geq e$.
\end{proof}

\begin{lemma}
For every odd integer $j \geq 1$, we have $a_j = b_{2j} - b_{j-1} - 1$.
\end{lemma}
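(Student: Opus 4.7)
The plan is to reduce the identity to a completely elementary manipulation of $p$-adic valuations of factorials. First, I would unfold the recursions in Definition~\ref{abdef} to get closed forms. Telescoping gives, for odd $j\geq 1$,
\[
A_j \;=\; \frac{p^{j}}{1\cdot 3\cdot 5\cdots j},
\]
and for even $j\geq 0$,
\[
B_j \;=\; \frac{p^{j}}{2\cdot 4\cdots j} \;=\; \frac{p^{j}}{2^{j/2}\,(j/2)!}\,,
\]
each proved by straightforward induction from the recursion $A_j=p^2A_{j-2}/j$ and $B_j=p^2B_{j-2}/j$.

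Next, since $p$ is odd, the factor $2^{j/2}$ is a $p$-adic unit, so
\[
b_j \;=\; j-\nu_p\bigl((j/2)!\bigr)
\quad\text{(which matches Lemma~\ref{b bounds}),}
\]
and likewise $a_j = j - \nu_p(1\cdot 3\cdots j)$ for odd $j$. The key combinatorial identity is the factorization of a factorial into its odd and even parts: for odd $j\geq 1$,
\[
j!\;=\;(1\cdot 3\cdots j)\cdot\bigl(2\cdot 4\cdots(j-1)\bigr)\;=\;(1\cdot 3\cdots j)\cdot 2^{(j-1)/2}\cdot\bigl((j-1)/2\bigr)!\,.
\]
Taking $\nu_p$ of both sides (and again using that $2$ is a $p$-adic unit) yields
\[
\nu_p(j!) \;=\; \nu_p(1\cdot 3\cdots j) + \nu_p\bigl(((j-1)/2)!\bigr).
\]

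Finally, I would substitute. Using $b_{2j}=2j-\nu_p(j!)$ and $b_{j-1}=(j-1)-\nu_p(((j-1)/2)!)$, a direct computation gives
\[
b_{2j}-b_{j-1}-1 \;=\; j - \nu_p(j!) + \nu_p\bigl(((j-1)/2)!\bigr) \;=\; j - \nu_p(1\cdot 3\cdots j) \;=\; a_j,
\]
where the middle equality is exactly the factorial identity above. This completes the proof.

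There is no real obstacle here; the only mild care needed is to keep track that the closed forms hold starting from the correct base cases ($A_1=p$, $B_0=1$) and that the odd prime hypothesis is what allows the powers of $2$ to be discarded when passing from the multiplicative formulas to $p$-adic valuations.
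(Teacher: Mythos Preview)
Your proof is correct and takes a genuinely different route from the paper's. The paper argues by induction on $j$: it verifies the base case $j=1$, writes $a_j = 2 + a_{j-2} - v_p(j)$ from the recursion, substitutes the inductive hypothesis $a_{j-2} = b_{2(j-2)} - b_{j-3} - 1$, and then massages the $b$-terms using the recursion $b_k = 2 + b_{k-2} - v_p(k)$ twice to arrive at $b_{2j} - b_{j-1} - 1$. By contrast, you first solve the recursions in closed form, $A_j = p^j/(1\cdot 3\cdots j)$ and $B_j = p^j/\bigl(2^{j/2}(j/2)!\bigr)$, and then reduce the identity to the single combinatorial fact $\nu_p(j!) = \nu_p(1\cdot 3\cdots j) + \nu_p\bigl(((j-1)/2)!\bigr)$ coming from splitting $j!$ into its odd and even factors. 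Your approach is arguably cleaner and more illuminating, since the closed forms explain \emph{why} the relation holds; the paper's induction is more mechanical but has the minor advantage of never leaving the recursive framework already in place.
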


\begin{proof}
The claim holds when $j = 1$.  Now, assuming the result for some fixed value of $j - 2$, we compute (using that $p \neq 2$ several times)
\begin{align*}
a_j &= 2 + a_{j-2} - v_p(j) \\
&= 2 + \big(b_{2(j-2)} - b_{j-3} - 1\big) - v_p(j) \\
&= 2 + b_{2(j-2)} - v_p(2(j-1)) + v_p(j-1) - b_{j-3} - 1 - v_p(j) \\
&= b_{2j-2} - b_{j-1} + 1  - v_p(j) \\
&= b_{2j} - b_{j-1} - 1, 
\end{align*}
which completes the induction.
\end{proof}

\begin{corollary} \label{AB in Zp}
We have $b_j \geq 0$ for every even integer $j \geq 0$.  In particular, the element $B_j \in \QQ$ in fact lies in $\ZZ_{(p)}$, and so can be viewed as an element of $R$.  Similarly, $A_j \in \ZZ_{(p)}$ for every odd integer $j \geq 1$.
\end{corollary}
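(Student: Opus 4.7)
The plan is to extract both inequalities essentially for free from Lemma \ref{b bounds} and the identity $a_j = b_{2j} - b_{j-1} - 1$ established just above, using only that $p \geq 3$ and Legendre's bound on the $p$-adic valuation of a factorial. In particular, no new ingredient beyond what has already been proved is needed.

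First I would dispatch $b_j \geq 0$. The case $j = 0$ is immediate from $B_0 = 1$, giving $b_0 = 0$. For any even $j \geq 2$, Lemma \ref{b bounds} yields $b_j > j - j/(2(p-1))$; since $p \geq 3$ forces $2(p-1) \geq 4$, the right-hand side is bounded below by $j/2 > 0$, and hence $b_j \geq 1$. Combining these cases gives the first claim and, consequently, $B_j \in \ZZ_{(p)}$ for all even $j \geq 0$.

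For the $a_j$ claim, I would substitute the closed form $b_k = k - \nu_p((k/2)!)$ from Lemma \ref{b bounds} into the identity $a_j = b_{2j} - b_{j-1} - 1$. Since $j$ is odd, both $2j$ and $j-1$ are even, so the formula applies on both sides and simplifies to
\[
a_j = j + \nu_p\bigl(((j-1)/2)!\bigr) - \nu_p(j!).
\]
Legendre's formula bounds $\nu_p(j!) \leq j/(p-1) \leq j/2$ because $p \geq 3$, and the remaining $\nu_p$-term is nonnegative, so $a_j \geq j/2$. Since $a_j = \nu_p(A_j)$ is an integer, this forces $a_j \geq 1$, and in particular $A_j \in \ZZ_{(p)}$. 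No serious obstacle is expected here; the only small pitfall is that the closed form for $b_k$ requires $k$ to be even, which is precisely why the parity assumption on $j$ enters—odd $j$ ensures both $2j$ and $j-1$ are even and the formula can be applied.
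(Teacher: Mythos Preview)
Your proof is correct and follows essentially the same route as the paper: both arguments deduce $b_j \geq 0$ directly from Lemma~\ref{b bounds}, and both handle $a_j$ by combining the identity $a_j = b_{2j} - b_{j-1} - 1$ with Legendre-type bounds. The only cosmetic difference is that you expand via the closed form $b_k = k - \nu_p((k/2)!)$ and then apply Legendre's bound once to $\nu_p(j!)$, whereas the paper quotes the inequality from Lemma~\ref{b bounds} on $b_{2j}$ and the trivial upper bound $b_{j-1} \leq j-1$; both yield the same estimate $a_j \geq j - j/(p-1)$.
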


\begin{proof}
The statement about $b_j$ is clear from the above results.  For the numbers $a_j$, the result holds for $j = 1$, and for all values of $j \geq 3$, we can use the bounds 
\[
a_j = b_{2j} - b_{j-1} - 1 \geq 2j - \frac{2j}{2(p-1)} - (j-1) - 1 = j - \frac{j}{p-1}.
\]
This completes the proof.
\end{proof}

Our next preliminary result will provide a way to determine the colimit of the diagram~({\ref{staircase-regular}}) for a fixed value of $i$ in terms of the colimit for $i-2$.

\begin{lemma} \label{divide by k lemma}
Fix an integer $k$.  Assume $\psi: R \rightarrow M$ is an $R$-module homomorphism and that there exists $m_0 \in M$ such that $\psi(1) = km_0$. Let $\iota: M \rightarrow M \oplus R/k$ be given by $m \mapsto (m,0)$ and let $\psi': R \rightarrow M \oplus R/k$ be given by $r \mapsto (rm_0, r)$.  Then 
\[
\xymatrix{
R \ar^{\psi'}[d] & R \ar_{k}[l] \ar^{\psi}[d] \\
M \oplus R/k& \ar_-{\iota}[l]M 
}
\]
is a pushout diagram. 
\end{lemma}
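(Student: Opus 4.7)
The plan is to verify directly that the given square satisfies the universal property of a pushout of $R$-modules, after first checking commutativity. Commutativity is immediate from the hypothesis $\psi(1)=km_0$: going right-then-down sends $r$ to $\iota(\psi(r))=(\psi(r),0)=(krm_0,0)$, while going down-then-right sends $r$ to $\psi'(kr)=(krm_0,\overline{kr})=(krm_0,0)$, since $kr$ vanishes in $R/k$.

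For the universal property, suppose $N$ is an $R$-module equipped with maps $f\colon M\to N$ and $g\colon R\to N$ satisfying the compatibility $g(kr)=f(\psi(r))$ for all $r\in R$. The plan is to produce $h\colon M\oplus R/k\to N$ by the formula
\[
h(m,\bar r):=f(m)-rf(m_0)+g(r).
\]
This formula is forced: any $h$ satisfying $h\circ\iota=f$ and $h\circ\psi'=g$ must have $h(m,0)=f(m)$, while $h(0,\bar r)=h(\psi'(r))-h(\iota(rm_0))=g(r)-rf(m_0)$, so existence and uniqueness both reduce to showing that this formula descends to $M\oplus R/k$. The only substantive check is well-definedness in the second coordinate: replacing $r$ by $r+ks$ alters the expression by $-ksf(m_0)+g(ks)$, which vanishes because the compatibility hypothesis gives $g(ks)=f(\psi(s))=skf(m_0)$, using $\psi(s)=skm_0$.

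There is no real obstacle; the lemma is essentially a bookkeeping statement, and the content is that the hypothesis $\psi(1)=km_0$ is precisely what allows the relation $\psi(r)-kr\cdot m_0=0$ in $M$ to be absorbed into an $R/k$-factor. More conceptually, one can alternatively realize the pushout as the cokernel of the map $R\to R\oplus M$, $r\mapsto(kr,-\psi(r))=(kr,-krm_0)$, and then observe that the shear automorphism $(r,m)\mapsto(r,m+rm_0)$ of $R\oplus M$ carries this map onto $r\mapsto(kr,0)$. The cokernel is thereby identified with $R/k\oplus M$, and tracking the universal maps through the inverse shear recovers exactly $\iota$ and $\psi'$.
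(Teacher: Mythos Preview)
Your proof is correct and follows essentially the same approach as the paper: both verify the universal property directly by defining the factoring map via the formula $(m,\bar r)\mapsto g(r)+f(m-rm_0)$ (in your notation), with the identical well-definedness and uniqueness checks. Your closing ``conceptual'' paragraph, identifying the pushout as a cokernel and using the shear automorphism $(r,m)\mapsto(r,m+rm_0)$, is a nice alternative viewpoint not present in the paper, but the main argument is the same.
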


\begin{proof}
It is clear that the given diagram commutes.  
Assume we have a commutative diagram
\[
\xymatrix{
&R \ar@/_1.5pc/ _{f_1}[ddl] \ar^{\psi'}[d] & R \ar_{k}[l] \ar^{\psi}[d] \\
& M \oplus R/k& \ar_-{\iota}[l]M \ar@/^1.5pc/^{f_2}[dll]\\
N
}
\]
Define a map $f:M \oplus R/k \rightarrow N$ by $(m,r) \mapsto f_1(r) + f_2(m-rm_0)$.  This map is well-defined because we have
\[
f_1(r+sk) + f_2(m-rm_0 - skm_0) = f_1(r) + f_2(m - rm_0) + kf_1(s) - f_2 (\psi(s)) = f_1(r) + f_2(m - rm_0).
\]
This map is an $R$-module homomorphism. 

It follows directly from the definitions that $f_1 = f \circ \psi'$ and $f_2 = f \circ \iota$. Hence the diagram
\[
\xymatrix{
&R \ar@/_1.5pc/_{f_1}[ddl] \ar^{\psi'}[d] & R \ar_{k}[l] \ar^{\psi}[d] \\
& M \oplus R/k \ar_{f}[dl] & \ar_-{\iota}[l]M \ar@/^1.5pc/^{f_2}[dll]\\
N
}
\]
commutes.
 
Lastly we check that our map $f: M \oplus R/k \rightarrow N$ is the unique map making the diagram commute.  Let $g$ be any map making the diagram commute.  We then must have $g(m,0) = f_2(m)$ for all $m \in M$, and we must have $g(rm_0, r) = f_1(r)$ for all $r \in R$.  Thus we must have $g(m,r) = g(rm_0, r) + g(m - rm_0, 0) = f_1(r) + f_2(m - r m_0) = f(m,r)$ for all $m \in M, r \in R$, as required.
\end{proof}

We now use the above preliminary results to compute the colimit of the diagram~(\ref{staircase-regular}).

\begin{proposition} \label{colimit-regular}
For every odd positive integer $i$, set $M_i := R \oplus R/1 \oplus R/3 \oplus \cdots \oplus R/i$.  Then $M_i$ is a colimit of the diagram~(\ref{staircase-regular}). 
\end{proposition}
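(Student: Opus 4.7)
The plan is to induct on the odd positive integer $i$, with Lemma \ref{divide by k lemma} as the inductive engine. In the base case $i=1$, diagram~(\ref{staircase-regular}) reduces to $R_{-1}\xleftarrow{p} R_1'\xrightarrow{1} R_1$; since $R_1'\to R_1$ is the identity, the colimit collapses to $R$, which equals $M_1$ (as $R/1=0$).

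For the inductive step from $i-2$ to $i$, diagram~(\ref{staircase-regular}) for $i$ is obtained from the $(i-2)$-diagram by adjoining the span $R_i\xleftarrow{i} R_i'\xrightarrow{p^2} R_{i-2}$ on top. By the universal property of colimits, the new colimit is therefore the pushout of
\[
R_i \xleftarrow{\ i\ } R_i' \xrightarrow{\alpha_{i-2}\circ p^2} M_{i-2},
\]
where $\alpha_{i-2}\colon R_{i-2}\to M_{i-2}$ is the structure map provided by the inductive hypothesis. Applying Lemma~\ref{divide by k lemma} with $k=i$, $M=M_{i-2}$, and $\psi=\alpha_{i-2}\circ p^2$, the pushout is identified with $M_{i-2}\oplus R/i=M_i$ as soon as we exhibit some $m_0\in M_{i-2}$ satisfying $im_0=p^2\alpha_{i-2}(1)$.

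Accordingly, the main technical task is to verify that $p^2\alpha_{i-2}(1)$ is $i$-divisible in $M_{i-2}$. Writing $\alpha_{i-2}(1)=(c_{-1},c_1,c_3,\ldots,c_{i-2})$ in the direct-sum decomposition of $M_{i-2}$, the problem reduces to solving $ix=p^2c_k$ in each slot. Unwinding the previous applications of Lemma~\ref{divide by k lemma} gives $c_{-1}=A_{i-2}$, whence $p^2c_{-1}=p^2A_{i-2}=iA_i$ by Definition~\ref{abdef}, handling the free $R$-slot. In the rightmost slot $R/(i-2)$, the coprimality $\gcd(i,i-2)=\gcd(i,2)=1$ (both $i$ and $i-2$ are odd) makes $i$ a unit in $R/p^{v_p(i-2)}$, so $i$-divisibility is automatic. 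For each remaining middle slot $R/k$ with $1\le k<i-2$, the recursion $(j+2)c_k^{(j+2)}=p^2c_k^{(j)}$ arising at each earlier inductive step yields an explicit description of the form $c_k\equiv p^{i-k-2}/((k+2)(k+4)\cdots(i-2))$ (up to a unit) inside $R/p^{v_p(k)}$; then $i$-divisibility of $p^2c_k$ is equivalent to a $p$-adic valuation inequality of the form $v_p((k+2)(k+4)\cdots i)\le i-k$ modulo $p^{v_p(k)}$, which follows from Legendre-style estimates of the kind captured by Lemmas~\ref{b bounds} and~\ref{padic val}.

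I expect the main obstacle to lie in carrying out this uniform $p$-adic bookkeeping across all middle slots. A substantial simplification is that, for odd $p$, one cannot have $v_p(i)\ge 3$ and $v_p(i-2)\ge 3$ simultaneously, as this would force $p^3\mid 2$; this drastically restricts the cases demanding genuine effort. Once all coordinate divisibilities are confirmed, Lemma~\ref{divide by k lemma} produces $M_{i-2}\oplus R/i\cong M_i$ as the pushout, closing the induction.
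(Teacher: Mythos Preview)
Your inductive strategy via Lemma~\ref{divide by k lemma} is exactly the paper's, and the base case, the free $R$-summand, and the slot $R/(i-2)$ are all fine. The gap is in the remaining slots $R/k$ for $1\le k<i-2$. Your inductive hypothesis is only that $M_{i-2}$ is the colimit, without pinning down the structure map $\alpha_{i-2}$; consequently the components $c_k$ are not determined, and the recursion $(j+2)c_k^{(j+2)}=p^2c_k^{(j)}$ (which holds only in $R/k$) does not fix them either. The closed form $p^{i-k-2}/\bigl((k+2)(k+4)\cdots(i-2)\bigr)$ you write down lives a priori only in $\QQ$ and need not lie in $\ZZ_{(p)}$, so it does not obviously define an element of $R/k$; and the valuation inequality you extract from it concerns a product of consecutive \emph{odd} integers, which is not what Lemmas~\ref{b bounds} and~\ref{padic val} control (those bound $v_p((j/2)!)$). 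Finally, your ``substantial simplification'' that $v_p(i)\ge 3$ and $v_p(i-2)\ge 3$ cannot both hold addresses only the slot $k=i-2$, which you have already handled; for a general middle slot one can certainly have $v_p(i)\ge 3$ and $v_p(k)\ge 3$ simultaneously (e.g.\ $i=3p^3$, $k=p^3$), so no reduction occurs.

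The paper closes this gap by strengthening the inductive hypothesis to track an explicit representative: one assumes $\phi_{i-2}(1)$ is represented by $(A_{i-2},B_{i-3},B_{i-5},\ldots,B_0)$ with all entries in $\ZZ_{(p)}$, and then takes $m_0=(A_i,B_{i-1},B_{i-3},\ldots,B_0)$. The check that $i\cdot m_0=p^2\phi_{i-2}(1)$ in each slot $R/n$ is then a one-liner: since $nB_{i-n}=0$ in $R/n$, one has $i\,B_{i-n}=(i-n)B_{i-n}=p^2B_{i-2-n}$ straight from Definition~\ref{abdef}. The only input is $B_j\in\ZZ_{(p)}$ (Corollary~\ref{AB in Zp}); no case analysis or Legendre-type estimate is needed. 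The trick you are missing is precisely the congruence $i\equiv i-n\pmod n$, which converts the awkward multiplier $i$ into the factor $i-n$ built into the defining recursion of the $B$-sequence.
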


\begin{proof}
We prove this using induction on $i$.  For the base case $i=1$, consider the diagram of $R$-modules
\[
\xymatrix{
R \ar^{\phi_1}[d] & R \ar_{1}[l] \ar^{p}[d] \\
R \oplus R/1& \ar_-{\iota}[l] R,
}
\]
with $\phi_1(1) = (p,0)$ and $\iota(1) = (1,0)$.  In this case we have that $M_1$, with the indicated maps, is a colimit of the diagram~(\ref{staircase-regular}).  For later use in the induction, we note also that $\phi_1(1) \in R \oplus R/1$ can be represented by $(A_1, B_0)$, as defined in Definition~\ref{abdef}.

Now assume the result has been proven for some fixed value of $i-2$, and let $\phi_{i-2}$ denote the corresponding map $R_{i-2} \rightarrow M_{i-2}$, where the notation $R_{i-2} = R$ refers to the upper-leftmost factor in diagram~(\ref{staircase-regular}). Further, assume that $\phi_{i-2}(1)$ can be represented by the element $(A_{i-2}, B_{i-2-1}, B_{i-2-3}, \ldots, B_0)$. 

Consider the diagram of $R$-modules
\[
\xymatrix{
R \ar^-{\phi_i}[d] & R \ar_{i}[l] \ar^-{p^2 \phi_{i-2}}[d] \\
 M_{i-2} \oplus R/i & \ar_-{\iota}[l] M_{i-2},
}
\]
where $\iota(m) = (m,0)$ and where $\phi_i(1) = (A_{i}, B_{i-1}, B_{i-3}, \ldots, B_{2}, B_0)$.  By Lemma~\ref{divide by k lemma}, to prove that $M_{i-2} \oplus R/i$ (with the above maps) is a colimit of the diagram
\[
\xymatrix{
R  & R \ar_{i}[l] \ar^-{p^2 \phi_{i-2}}[d] \\
& M_{i-2},
}
\]
it suffices to check that $iA_i = p^2 A_{i-2} \in R$ and that for every odd integer $n$ in the interval $1 \leq n \leq i-2$, we have $iB_{i-n} = p^2 B_{i-2-n} \in R/n$.  The first equality is clear from the definition of $A_i$.  For the second equality, it is clear again from the definition that $(i-n)B_{i-n} = p^2 B_{i-2-n}$, so the desired result follows, since $nB_{i-n} = 0 \in R/n$.
\end{proof}

We record an important aspect of the proof of Proposition~\ref{colimit-regular} in the following corollary.

\begin{corollary} \label{pushout valuations}
Fix odd integers $i \geq j \geq 1$.  Because $R \oplus R/1 \oplus R/3 \oplus \cdots \oplus R/i$ is a colimit of the diagram~(\ref{staircase-regular}), we have a corresponding map of $R$-modules $\phi_{j,i}: R_j \rightarrow R \oplus R/1 \oplus R/3 \oplus \cdots \oplus R/i$.  This map has the property that $\phi_{j,i}(1)$ can be represented by an element $(c_{j,-1}, c_{j,1}, \ldots, c_{j,i})$, where each $c_{j,n} \in \ZZ_{(p)}$, and where these elements satisfy the following properties (with the $A$ and $B$ sequences defined as in Definition~\ref{abdef}):
\begin{enumerate}
\item We have $c_{j,-1} = A_j$.
\item If $n$ is an odd integer in the range $j < n \leq i$, then $c_{j,n} = 0$.
\item If $n$ is an odd integer in the range $1 \leq n \leq j$, then $c_{j,n} = B_{j-n} \in R/n$.
\end{enumerate}
\end{corollary}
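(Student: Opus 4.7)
The plan is to prove the corollary by induction on the odd positive integer $i$, extracting the necessary data from the pushout presentation used in the proof of Proposition~\ref{colimit-regular}. For the base case $i=1$ the only admissible $j$ is $j=1$, and the formula $\phi_{1,1}(1)=(p,1)=(A_1,B_0)$ is exactly the one appearing in the base of Proposition~\ref{colimit-regular}; conditions~(1) and~(3) hold and~(2) is vacuous.

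For the inductive step I would exploit the fact that the proof of Proposition~\ref{colimit-regular} presents $M_i$ as the pushout of the span
\[
R_i \xleftarrow{\,i\,} R_i' \xrightarrow{\,p^2\phi_{i-2,i-2}\,} M_{i-2},
\]
with the structure map $\iota\colon M_{i-2}\to M_i$ being the inclusion $m\mapsto (m,0)$ and $\phi_{i,i}\colon R_i\to M_i$ sending $1$ to $(A_i,B_{i-1},B_{i-3},\dots,B_0)$. Since $M_{i-2}$ is, by the inductive hypothesis, the colimit of the sub-diagram of~(\ref{staircase-regular}) obtained by deleting the nodes $R_i$ and $R_i'$, the universal property of the pushout identifies $M_i$ with the colimit of the full diagram for $i$, and it forces $\phi_{j,i}=\iota\circ\phi_{j,i-2}$ for every odd $j$ with $1\leq j\leq i-2$.

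With this factorization in hand, the case $j=i$ reads off directly from the formula for $\phi_{i,i}$: one obtains $c_{i,-1}=A_i$ and $c_{i,n}=B_{i-n}$ for odd $n\in[1,i]$, while~(2) is vacuous. For $j<i$ (so $j\leq i-2$), the inductive hypothesis gives a representative $(A_j,B_{j-1},\dots,B_0,0,\dots,0)\in M_{i-2}$ for $\phi_{j,i-2}(1)$ with the entries beyond position $j$ vanishing, and applying $\iota$ simply appends one further zero in the new $R/i$-coordinate. The resulting element of $M_i$ verifies conditions~(1),~(2),~(3) for $\phi_{j,i}$ by inspection.

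The only nontrivial point will be the identification of the factorization $\phi_{j,i}=\iota\circ\phi_{j,i-2}$ through the pushout structure of Proposition~\ref{colimit-regular}; once this is in place the rest is pure bookkeeping inherited from the inductive hypothesis, and no further arithmetic is needed, since all of the identities $iA_i=p^2A_{i-2}$ and $(i-n)B_{i-n}=p^2B_{i-2-n}$ required to form the pushout were already verified in the proof of Proposition~\ref{colimit-regular}.
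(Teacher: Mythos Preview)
Your proposal is correct and follows essentially the same approach as the paper, which simply states that the result ``follows directly from the construction given in the proof of Proposition~\ref{colimit-regular}.'' You have made explicit the induction on $i$ and the factorization $\phi_{j,i}=\iota\circ\phi_{j,i-2}$ for $j\leq i-2$ that the paper leaves implicit, but the underlying argument is the same.
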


\begin{proof}
This follows directly from the construction given in the proof of Proposition~\ref{colimit-regular}.
\end{proof}

\begin{corollary} \label{comp Mi to Ni}
Fix an odd positive integer $i$, and let $\phi_{i,i}: R_i \rightarrow M_i$ be as in Corollary~\ref{pushout valuations}.
The $R$-module $N_i$ (as defined in Section~\ref{setup section}) is isomorphic to the cokernel of $p^2 \phi_{i,i}: R_i \rightarrow M_i$.
\end{corollary}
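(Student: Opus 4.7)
The plan is to recognize that diagram (\ref{staircase-head}) defining $N_i$ differs from diagram (\ref{staircase-regular}) defining $M_i$ by exactly one extra node, namely $R'_{i+2}$, together with the two arrows $R'_{i+2} \to 0$ and $R'_{i+2} \xrightarrow{p^2} R_i$. Everything else in the two diagrams agrees. So the corollary should follow from the general fact that enlarging a diagram by a single node equipped with a map to $0$ produces a cokernel quotient of the original colimit.

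First I would unpack the universal property of the colimit of diagram (\ref{staircase-head}). A cocone with vertex $C$ consists of a cocone on diagram (\ref{staircase-regular}) together with an $R$-module map $R'_{i+2} \to C$. The constraint coming from the arrow $R'_{i+2} \to 0$ forces this last map to vanish, and compatibility with the arrow $R'_{i+2} \xrightarrow{p^2} R_i$ then forces the composition $R'_{i+2} \xrightarrow{p^2} R_i \to C$ to be zero as well. Conversely, any cocone on (\ref{staircase-regular}) for which the composite $R'_{i+2} \xrightarrow{p^2} R_i \to C$ vanishes extends uniquely to a cocone on (\ref{staircase-head}) by declaring the map out of $R'_{i+2}$ to be zero. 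This produces a natural bijection between cocones on (\ref{staircase-head}) with vertex $C$ and cocones on (\ref{staircase-regular}) with vertex $C$ whose $R_i$-component kills $p^2$.

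Next I would invoke Proposition~\ref{colimit-regular} together with Corollary~\ref{pushout valuations} to identify the colimit of (\ref{staircase-regular}) as $M_i$ with structural map $R_i \to M_i$ equal to $\phi_{i,i}$. Under this identification, the condition that $R'_{i+2} \xrightarrow{p^2} R_i \to C$ vanish becomes the condition that the induced map $M_i \to C$ kill the image of $p^2\phi_{i,i}$. The universal property of the cokernel then immediately identifies $N_i$ with $\coker(p^2\phi_{i,i})$, yielding the desired isomorphism.

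There is no serious obstacle here; the argument is pure bookkeeping with universal properties of colimits and cokernels. The only thing that requires a small amount of care is verifying the bijective correspondence between cocones on the two diagrams, which amounts to observing that a map out of the adjoined node $R'_{i+2}$ is redundant data once its necessary vanishing has been incorporated as a cokernel relation at $R_i$.
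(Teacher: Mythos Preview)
Your argument is correct and is precisely the unpacking of what the paper means by ``this follows immediately from comparing the diagrams used to define $M_i$ and $N_i$.'' The paper gives only that one sentence, so your proposal is the same approach written out in detail rather than a genuinely different route.
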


\begin{proof}
This follows immediately from comparing the diagrams used to define $M_i$ and $N_i$.
\end{proof}

\section{Computation of $N_i$ for certain $i$, and the computation of $N_{\infty}$} 

Recall again that $R$ denotes a $p$-torsion-free commutative $\ZZ_{(p)}$-algebra. Our goal in this section is to give an explicit description of $N_i$ for certain values of $i$.  Namely, we define an infinite set $Z_1$ in Definition~\ref{def of Z1} below, and compute $N_i$ for all $i \in Z_1$.  This will also enable us to compute the inverse limit $\varprojlim N_i =: N_{\infty}$.

By Corollary~\ref{comp Mi to Ni}, to compute $N_i$, it is equivalent to compute the cokernel of the map $p^2 \phi_{i,i}: R_i \rightarrow M_i$, where $\phi_{i,i}$ is defined as in Corollary~\ref{pushout valuations}.  For certain values of $i$, this map $\phi_{i,i}$ turns out to have a particularly simple form.  The description of those values of $i$ will involve the following parameter $g(n)$.

\begin{definition} \label{def gn}
Let $n \geq 1$ denote an integer which is divisible by $p$.  We define $g(n)$ to be the maximum even integer $j$ such that $b_j < v_p(n)$.
\end{definition}

The motivation for the definition of $g(n)$ is the following.  We will show that the map $\phi_{i,i}: R_i \rightarrow R \oplus R/1 \oplus R/3 \oplus \cdots \oplus R/i$ is non-zero in the $R/n$-factor only if $i$ is in the interval $[n, n+g(n)]$.

\begin{example} \label{ex gn}
We have $g(p) = g(p^2) = 0$, $g(p^3) = g(p^4) = 2$, $g(p^5) = 4$, and $g(3^6) = 6$. For example, if $\phi_{i,i}$ is non-zero in the $R/p^4$-factor, we will see below that the only possible values for $i$ are $i = p^4$ and $i = p^4 + 2$.
\end{example}

Let $c_{i,n}$ be defined as in Corollary~\ref{pushout valuations}, and let $b_j$ be defined as in Definition~\ref{abdef}.  The following result follows immediately from Corollary~\ref{pushout valuations} and the definitions.

\begin{lemma} \label{cin 0}
Let $n$ be an odd positive integer.  Assume $c_{j,n} \neq 0 \in R/n$ for some $j > n$ (so in particular, $v_p(n) \geq 1$).  Then $j - n \leq g(n)$.
\end{lemma}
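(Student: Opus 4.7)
This lemma is a direct repackaging of Corollary~\ref{pushout valuations} together with the definition of $g(n)$, so my plan is simply to unwind the two in sequence.

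First, I would apply Corollary~\ref{pushout valuations}(3): since $j > n \geq 1$ and both $j, n$ are odd, we fall in the regime $1 \leq n \leq j$ and $c_{j,n}$ is represented by $B_{j-n} \in R/n$, with $j-n$ a positive even integer. Because $R$ is a $\ZZ_{(p)}$-algebra and the prime-to-$p$ part of $n$ is a unit there, $R/n$ is canonically $R/p^{v_p(n)}$; this is precisely why the non-vanishing of $c_{j,n}$ forces $v_p(n) \geq 1$, as noted parenthetically in the statement (otherwise $R/n = 0$). By Corollary~\ref{AB in Zp} we have $B_{j-n} \in \ZZ_{(p)}$ with $v_p(B_{j-n}) = b_{j-n} \geq 0$.

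The hypothesis $c_{j,n} \neq 0$ in $R/p^{v_p(n)}$ is then equivalent to the valuation inequality $b_{j-n} < v_p(n)$. By Definition~\ref{def gn}, $g(n)$ is by construction the maximum even integer $j'$ satisfying $b_{j'} < v_p(n)$, so the conclusion $j - n \leq g(n)$ is immediate. The only point worth noting is that this maximum actually exists: this is guaranteed by Lemma~\ref{padic val}, which implies $b_{j'} \to \infty$ as $j' \to \infty$ and hence bounds above the set of candidate $j'$. There is no real obstacle here; the entire argument is bookkeeping that unwinds the prior structural results.
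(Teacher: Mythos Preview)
Your proof is correct and follows exactly the same approach as the paper: use Corollary~\ref{pushout valuations} to identify $c_{j,n}$ with $B_{j-n}$, deduce $b_{j-n} < v_p(n)$ from non-vanishing, and invoke the definition of $g(n)$. You supply more detail than the paper (the identification $R/n \cong R/p^{v_p(n)}$ and the existence of the maximum via Lemma~\ref{padic val}), but the argument is the same; one minor quibble is that ``equivalent'' is slightly stronger than needed (and could fail if $p$ were a unit in $R$), though only the implication you actually use is required.
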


\begin{proof}
We know $v_p(c_{j,n}) = b_{j-n}$ by Corollary~\ref{pushout valuations}.  Since we are assuming that $c_{j,n} \neq 0 \in R/n$, we must have $b_{j-n} < v_p(n)$.  So $g(n) \geq j - n$ by the definition of $g(n)$, as claimed.
\end{proof}

\begin{definition} \label{def of Z1}
Let $X$ denote the set of all odd positive integers, and let $X_1$ denote the set of all odd positive integers which are divisible by $p$. Define $Z_1$ to be the set 
\[
Z_1 := X \setminus \bigcup_{n \in X_1} [n, n+g(n)].
\]
\end{definition}

For all $i \in Z_1$, the map $\phi_{i,i}: R \rightarrow M_i$ has a particularly simple form.

\begin{lemma} \label{some Ni}
Assume $i \in Z_1$.  Then the map $\phi_{i,i} : R_i \rightarrow M_i = R \oplus R/1 \oplus R/3 \oplus \cdots \oplus R/i$ has the property that $\phi_{i,i}(1) = (A_{i}, 0, 0, \ldots, 0),$ where $A_i$ is defined as in Definition~\ref{abdef}.  In particular, for each such $i$ we have an isomorphism of $R$-modules
\[
N_i \cong R/p^{a_{i}+2} \oplus R/1 \oplus R/3 \oplus \cdots \oplus R/(i-2) \oplus R/i.
\]
\end{lemma}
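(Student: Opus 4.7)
The plan is to apply Corollary~\ref{pushout valuations} to write $\phi_{i,i}(1) = (A_i, c_{i,1}, c_{i,3}, \ldots, c_{i,i})$ with $c_{i,n} = B_{i-n} \in R/n$, and then show that all components except $A_i$ vanish whenever $i \in Z_1$. Once this is established, the second statement about $N_i$ follows immediately from Corollary~\ref{comp Mi to Ni}: the cokernel of the map $r \mapsto (p^2 A_i r, 0, \ldots, 0)$ on $M_i = R \oplus R/1 \oplus R/3 \oplus \cdots \oplus R/i$ is exactly $R/(p^2 A_i) \oplus R/1 \oplus \cdots \oplus R/i$, and since $A_i = p^{a_i} u$ for some unit $u \in \ZZ_{(p)}^\times$ (by definition of $a_i = \nu_p(A_i)$ and since $A_i \in \ZZ_{(p)}$), we have $R/(p^2 A_i) = R/p^{a_i+2}$.

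The heart of the argument is therefore showing $c_{i,n} = 0 \in R/n$ for every odd $n$ with $1 \leq n \leq i$. I would split this into two cases. First, if $\gcd(n,p) = 1$, then $n$ is a unit in $\ZZ_{(p)}$, hence a unit in $R$, so $R/n = 0$ and there is nothing to check. Second, suppose $p \mid n$, so $n \in X_1$. Here I would argue that necessarily $n < i$: if instead $n = i$, then $p \mid i$, so $i \in X_1$, and then $i \in [i, i + g(i)]$, contradicting $i \in Z_1$. Given $n < i$, the hypothesis $i \in Z_1$ forces $i \notin [n, n+g(n)]$, and combined with $n < i$ this gives $i - n > g(n)$. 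Lemma~\ref{cin 0} then applies in its contrapositive form: since $c_{i,n} = B_{i-n}$ with $i - n > g(n)$, we must have $c_{i,n} = 0 \in R/n$, as required.

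The main conceptual obstacle is really just bookkeeping: making sure that the indexing ranges in Definition~\ref{def of Z1} line up correctly with the indexing used in Corollary~\ref{pushout valuations}, and confirming that the edge case $n = i$ is ruled out by $i \in Z_1$. The substantive input — that $B_{i-n}$ vanishes modulo $n$ exactly when $b_{i-n} \geq \nu_p(n)$, which is controlled by $g(n)$ — has already been packaged into Lemma~\ref{cin 0} and the definition of $g(n)$. Once those pieces are combined, the description of $\phi_{i,i}(1)$ collapses to $(A_i, 0, \ldots, 0)$ and the stated isomorphism for $N_i$ is essentially a one-line computation of a cokernel.
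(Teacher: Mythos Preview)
Your proof is correct and follows exactly the route the paper intends: the paper's own proof is the single sentence ``This follows immediately from Lemma~\ref{cin 0} and the definitions,'' and you have simply unpacked that, including the edge case $n=i$ and the passage from $\phi_{i,i}(1)=(A_i,0,\ldots,0)$ to the cokernel description of $N_i$ via Corollary~\ref{comp Mi to Ni}.
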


\begin{proof}
This follows immediately from Lemma~\ref{cin 0} and the definitions.
\end{proof}

\begin{corollary} \label{projection Z1}
Assume $i \in Z_1$ and let $k > i$ be an odd integer.  Then the projection map $N_k \rightarrow N_i$ from Lemma~\ref{between heads} is induced by a map $M_k \rightarrow N_i$,
\begin{align*}
R \oplus R/1 \oplus R/3 \oplus \cdots \oplus R/i \oplus \cdots \oplus R/k &\rightarrow R/p^{a_i+2} \oplus R/1 \oplus R/3 \oplus \cdots \oplus R/i,\\
\intertext{of the form}
(r_{-1},r_1,r_3,\ldots, r_{i}, \cdots, r_k) &\mapsto (r_{-1}+C,r_1,r_3,\ldots,r_i),
\end{align*}
where $C$ is divisible by $p^a$, for $a = \min_{m \geq i+2} a_m$ and and it only depends on $(r_{i+2}, \dots, r_k)$. 
\end{corollary}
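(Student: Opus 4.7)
The plan is to compute the composition $M_k \twoheadrightarrow N_k \xrightarrow{\pi_{k,i}} N_i$ on each standard basis vector of $M_k = R \oplus R/1 \oplus R/3 \oplus \cdots \oplus R/k$. Denote these vectors $e_{-1}, e_1, e_3, \ldots, e_k$ and write $\bar{e}_j$ for their images in $N_i$. By functoriality of colimits and the identification of $N_i$ in Lemma~\ref{some Ni}, the image $\bar{e}_n$ for $n \le i$ equals the $n$-th standard basis vector of $N_i = R/p^{a_i+2} \oplus R/1 \oplus R/3 \oplus \cdots \oplus R/i$. The content lies in computing $\bar{e}_l$ for $i < l \le k$.

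For such $l$, Lemma~\ref{between heads} says $u_l := \phi_{l,k}(1) \in M_k$ maps to $0$ in $N_i$. Applying Corollary~\ref{pushout valuations}, we obtain the relation
\[
A_l \bar{e}_{-1} + B_{l-1} \bar{e}_1 + B_{l-3} \bar{e}_3 + \cdots + B_0 \bar{e}_l = 0 \quad \text{in } N_i.
\]
Since $B_0 = 1$, this can be solved recursively for $\bar{e}_l$. The plan is to prove by induction on $l$, starting at $l = i+2$, that $\bar{e}_l$ has the form $(C_l, 0, \ldots, 0) \in N_i$ with $v_p(C_l) \ge a$. Combined with $R$-linearity, this yields the statement, with $C = \sum_{i < l \le k} r_l C_l$ manifestly depending only on $(r_{i+2}, \ldots, r_k)$.

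The inductive step splits the sum defining $\bar{e}_l$ into four kinds of terms. (i) The term $-A_l \bar{e}_{-1} = (-A_l, 0, \ldots, 0)$ has $v_p(A_l) = a_l \ge a$ by the definition of $a$. (ii) For $n \le i$ odd and coprime to $p$, the $R/n$-factor of $N_i$ vanishes (as $R$ is a $\ZZ_{(p)}$-algebra), so the term drops out. (iii) For $n \le i$ odd and divisible by $p$, the hypothesis $i \in Z_1$ means $i > n + g(n)$, hence $l - n > g(n)$, and by Definition~\ref{def gn} this gives $b_{l-n} \ge v_p(n)$, i.e., $B_{l-n} \equiv 0 \pmod{n}$; again the term drops out. (iv) For $i < n < l$, the inductive hypothesis yields $\bar{e}_n = (C_n, 0, \ldots, 0)$ with $v_p(C_n) \ge a$, so $B_{l-n} \bar{e}_n$ contributes only to the first component, with valuation at least $v_p(B_{l-n}) + v_p(C_n) \ge a$. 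Summing produces $\bar{e}_l = (C_l, 0, \ldots, 0)$ with $v_p(C_l) \ge a$, as claimed.

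The main point requiring care is (iii): the combinatorial condition defining $Z_1$ is engineered exactly to kill the potential off-diagonal components $B_{l-n} \bmod n$ of $\bar{e}_l$ in the $R/n$-summands, leaving only a contribution to the first coordinate. Once this observation is secured, the induction is mechanical and the final packaging into the displayed formula for $C$ is immediate.
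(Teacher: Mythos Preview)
Your proof is correct and uses essentially the same ingredients as the paper's: the vanishing of the ``off-diagonal'' coefficients $B_{l-n}\bmod n$ for $n\le i$ coming from the definition of $Z_1$ (your step (iii), the paper's appeal to Lemma~\ref{cin 0}), and the valuation bound $a_l\ge a$ on the $R^{\wedge}$-component. The only organizational difference is that the paper lifts elements of $M_k$ back to the direct sum $R_{-1}\oplus R_1\oplus\cdots\oplus R_k$ and splits into the two ``halves'' $(r_{-1},\ldots,r_i,0,\ldots,0)$ and $(0,\ldots,0,r_{i+2},\ldots,r_k)$, whereas you compute directly on the standard basis of $M_k$ and run an induction on $l$; both arguments unwind to the same computation.
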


\begin{proof}
Using the construction and notation from the proof of Lemma~\ref{between heads}, the projection map $N_k \rightarrow N_i$ fits into a commutative diagram
\[
\xymatrix{
R_{-1} \oplus R_1 \oplus \cdots \oplus R_i \oplus \cdots \oplus R_k \ar[r] \ar[d]_{\phi} & R_{-1} \oplus R_1 \oplus \cdots \oplus R_i \ar[dd] \\
M_k = R \oplus R/1 \oplus R/3 \oplus \cdots \oplus R/i \oplus \cdots \oplus R/k \ar[d] \\
N_k \ar[r] & N_i,
}
\]
where all of the $R_j$ terms are equal to $R$.  In this diagram, the top horizontal map is the obvious projection.

Consider first an element of the form
\[
(r_{-1},r_1,r_3,\ldots, r_{i}, 0, \cdots, 0) \in R \oplus R/1 \oplus R/3 \oplus \cdots \oplus R/i \oplus \cdots \oplus R/k \cong M_k.
\]
Choose elements $s_{-1}, s_1, \ldots, s_i \in R$ so that the element
\[
(s_{-1}, s_1, \ldots, s_i, 0, \ldots, 0) \in R_{-1} \oplus R_1 \oplus \cdots \oplus R_i \oplus \cdots \oplus R_k
\]
maps under $\phi$ to $(r_{-1},r_1,r_3,\ldots, r_{i}, 0, \cdots, 0) \in M_k$. This is possible from the construction of $\phi$ (Corollary \ref{pushout valuations}), since we can choose
\[
(s_{-1}, s_1, \ldots, s_i) \in R_{-1} \oplus R_1 \oplus \cdots \oplus R_i
\]
mapping to $(r_{-1},r_1,r_3,\ldots, r_{i}) \in M_i$, and hence also to an element with the same representative in $N_i$.  Thus our claimed result is true for elements of the form $(r_{-1},r_1,r_3,\ldots, r_{i}, 0, \cdots, 0) \in M_k$.  (In fact, for these special elements, a stronger result is true, because we have showed that in this case, we have $C = 0$.)

Because the map $M_k \rightarrow N_i$ is an $R$-module map, it in particular is additive, so it suffices now to consider the ``complementary" elements, i.e., those of the form
\[
(0,\ldots, 0, r_{i+2}, \cdots, r_k) \in R \oplus R/1 \oplus R/3 \oplus \cdots \oplus R/i \oplus \cdots \oplus R/k \cong M_k.
\]
We will show that this element is the image under $\phi$ of an element 
\[
(C, 0, \ldots, 0, s_{i+2}, \ldots, s_k) \in R_{-1} \oplus R_1 \oplus \cdots \oplus R_i \oplus \cdots R_k,
\]
where $C$ is as in the statement.  Under the top horizontal map, this element $(C, 0, \ldots, 0, s_{i+2}, \ldots, s_k)$ maps to $(C, 0, \ldots, 0)$, and this will complete the proof.

We now carry out the construction just described.  Let $(0,\ldots, 0, r_{i+2}, \cdots, r_k) \in M_k$, and choose any pre-image under $\phi$, denoted $(y_{-1}, y_1, \ldots, y_i, y_{i+2}, \ldots, y_k)$.  Because $i \in Z_1$, by Corollary~\ref{pushout valuations} and Lemma~\ref{cin 0}, we know $c_{t,l}=0$ for $1 \leq l \leq i$ and $i \leq t$. Hence the image of $(0, \ldots, 0, y_{i+2}, \ldots, y_k)$ under $\phi$ must be $(-C, 0,\ldots, 0, r_{i+2}, \cdots, r_k)$, for some $C \in R$ that is divisible by $p^a$, for $a = \min_{m \geq i+2} a_m$, and hence $(0,\ldots, 0, r_{i+2}, \cdots, r_k) \in M_k$ is the image under $\phi$ of $(C, 0, \ldots, 0, y_{i+2}, \ldots, y_k)$.  Under the top horizontal map in our diagram, this element maps to $(C, 0, \ldots, 0)$, which in turn maps to an element in $N_i$ represented by $(C, 0, \ldots, 0)$.  This completes the proof.
\end{proof}

Because of the $C$ terms in the above formula, the maps $N_j \rightarrow N_i$ for $i,j \in Z_1$ typically do not correspond to the obvious projections. (For example, the $R/(i+2)$-component will typically contribute something nonzero under the projection.) This is relevant, as our next goal is to compute the inverse limit $\varprojlim_k N_k$ by restricting to $\varprojlim_{i \in Z_1} N_i$, where these maps turn up. 

To see that $Z_1\subset \mathbb{N}$ is indeed cofinal, we first need the following:

\begin{lemma}\label{estimate gpa}
  For any $a>0$, $g(p^a)<\frac{p^a-1}{2}$.
\end{lemma}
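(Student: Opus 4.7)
The plan is to use the refined lower bound $b_j > \frac{(2p-3)j}{2(p-1)}$ from Lemma \ref{b bounds} (valid for any even $j \geq 2$) to estimate $g(p^a)$ from above, and then compare the resulting bound with $\frac{p^a-1}{2}$. I would split the argument into the cases $a=1$ and $a \geq 2$.

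For $a = 1$: the bound above gives $b_j \geq \frac{(2p-3)\cdot 2}{2(p-1)} \geq \frac{3}{2}$, hence $b_j \geq 2$, for every even $j \geq 2$. Thus the only even $j$ with $b_j < 1$ is $j = 0$, so $g(p) = 0 < \frac{p-1}{2}$ since $p \geq 3$.

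For $a \geq 2$: set $j := g(p^a)$. If $j = 0$ the conclusion is immediate from $p^a \geq 9$. Otherwise $j \geq 2$, and combining $b_j < a$ with Lemma \ref{b bounds} gives
\[
j < \frac{2a(p-1)}{2p-3}.
\]
It then suffices to establish the purely numerical inequality $\frac{2a(p-1)}{2p-3} \leq \frac{p^a - 1}{2}$, equivalently $4a(p-1) \leq (p^a - 1)(2p-3)$, for all odd $p \geq 3$ and all $a \geq 2$. I would prove this by induction on $a$: the base case $a=2$ reduces to $(p+1)(2p-3)\geq 8$, which is obvious for $p \geq 3$; and the inductive step follows from the identity $(p^{a+1}-1)(2p-3)-(p^a-1)(2p-3)=p^a(p-1)(2p-3)$, whose right-hand side dominates the required $4(p-1)$ because $p^a(2p-3)\geq 27 \geq 4$ once $a \geq 2$.

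The one mildly delicate point is that the numerical inequality $\frac{2a(p-1)}{2p-3} \leq \frac{p^a-1}{2}$ actually \emph{fails} at $(p,a)=(3,1)$, so the case $a=1$ genuinely has to be treated separately rather than subsumed by the general estimate — but as noted above, this case is trivial once one observes that $b_j \geq 2$ for every even $j \geq 2$, independent of $p$.
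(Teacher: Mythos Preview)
Your proof is correct and follows essentially the same route as the paper: both reduce, via the lower bound $b_j > \tfrac{2p-3}{2p-2}\,j$ from Lemma~\ref{b bounds}, to the numerical inequality $\tfrac{2(p-1)}{2p-3}\,a < \tfrac{p^a-1}{2}$ for $a\geq 2$, handling small $a$ separately. The only cosmetic difference is in the endgame: the paper disposes of the numerical inequality by monotonicity in $p$ (reducing to $p=3$), while you do it by induction on $a$; both are equally elementary.
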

\begin{proof}
  For $a=1$ and $2$, this is immediate since there $g(p^a)=0$.

  From Lemma \ref{b bounds}, we have the linear lower bound 
  \[
    b_j > \left( \frac{2p-3}{2p-2}\right)j.
  \]
  This yields an upper bound on $g(p^a)$:
  \[
    g(p^a)=\max\{j: b_j < a\} \leq \max\left\{j: \left( \frac{2p-3}{2p-2}\right)j < a\right\} < \left( \frac{2p-2}{2p-3}\right)a.
  \]
  After substituting this and rearranging the inequality, it therefore suffices to show
  \[
    p^a - \frac{4p-4}{2p-3}a > 1.
  \]
  Note that  $\frac{4p-4}{2p-3}$ is strictly decreasing in $p$ for $p>2$, and $p^a$ is strictly increasing in $p$ (assuming $a>1$)--hence it suffices to show this for $p=3$. That leaves us with
  \[
    3^a-\frac{8}{3}a>1,
  \]
  which is true by elementary methods. (For example, it holds for $a=2$, and the left hand side is strictly increasing for $a\geq 2$.)
\end{proof}

\begin{corollary}\label{Z-sequence}
  For every odd $n$ that is divisible by $p$, $\lvert\frac{p^a\pm 1}{2} -n\rvert > g(n)$. In particular, all odd numbers of the form $\frac{p^a\pm 1}{2}$ are contained in $Z_1$, which therefore contains arbitrarily large numbers.
\end{corollary}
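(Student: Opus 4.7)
My plan is to reduce the claim to a quantitative lower bound on $\left|\tfrac{p^a \pm 1}{2} - n\right|$ and combine it with Lemma~\ref{estimate gpa}. Specifically, I would aim to prove the slightly stronger inequality
\[
\left|\tfrac{p^a \pm 1}{2} - n\right| \;\geq\; \tfrac{p^{\alpha} - 1}{2}, \qquad \alpha := v_p(n),
\]
and then observe that $g(n)$ depends only on $\alpha$ (the condition $b_j < v_p(n)$ in Definition~\ref{def gn} uses only $v_p(n)$), so $g(n) = g(p^\alpha)$, and Lemma~\ref{estimate gpa} gives $g(p^\alpha) < \tfrac{p^\alpha - 1}{2}$. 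Together these produce the desired strict inequality $\left|\tfrac{p^a \pm 1}{2} - n\right| > g(n)$.

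To prove the numerical bound I would split into two cases. When $\alpha \geq a$, the bound is essentially immediate: since $n$ is a positive multiple of $p^\alpha \geq p^a$, we have $n \geq p^\alpha$ and hence
\[
n - \tfrac{p^a \pm 1}{2} \;\geq\; p^\alpha - \tfrac{p^\alpha + 1}{2} \;=\; \tfrac{p^\alpha - 1}{2}.
\]
When $\alpha < a$, I would instead work with the doubled quantity: setting $x = \tfrac{p^a \pm 1}{2}$, the congruence $p^a \equiv 0 \pmod{p^\alpha}$ yields $2x - 2n \equiv \pm 1 \pmod{p^\alpha}$, so $2x - 2n = p^\alpha k \pm 1$ for some $k \in \ZZ$. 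The crucial observation is that $k = 0$ would force $2n = p^a$, which is impossible since $p$ is odd and $2n$ is even; hence $|k| \geq 1$, and a short check on the sign of $k$ gives $|2x - 2n| \geq p^\alpha - 1$, i.e.\ $|x - n| \geq \tfrac{p^\alpha - 1}{2}$.

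For the ``in particular'' conclusion, the strict inequality $\left|\tfrac{p^a \pm 1}{2} - n\right| > g(n)$ directly implies $\tfrac{p^a \pm 1}{2} \notin [n, n + g(n)]$ for any $n \in X_1$ (if $\tfrac{p^a \pm 1}{2} \geq n$, then it exceeds $n + g(n)$; otherwise it is already below the interval), so any odd number of this form lies in $Z_1$. Cofinality of $Z_1$ in $\mathbb{N}$ then follows because $\tfrac{p^a \pm 1}{2} \to \infty$ as $a \to \infty$.

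The main obstacle is really just the modular argument in the case $\alpha < a$: the parity trick ruling out $k = 0$ is the only non-routine step, and the borderline constants ($p^\alpha - 1$ versus $\tfrac{p^\alpha-1}{2}$) match up precisely thanks to the strict inequality supplied by Lemma~\ref{estimate gpa}.
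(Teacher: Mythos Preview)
Your proposal is correct and follows essentially the same route as the paper: both reduce to the inequality $\left|\tfrac{p^a\pm 1}{2}-n\right|\ge \tfrac{p^{\alpha}-1}{2}$ via $g(n)=g(p^{\alpha})$ and Lemma~\ref{estimate gpa}, then split into the cases $\alpha\ge a$ and $\alpha<a$. In the second case the paper writes your integer $k$ explicitly as $p^{a-\alpha}-2m$ (with $n=mp^{\alpha}$) and uses the same parity observation to see it is nonzero, so your congruence argument is just a mild rephrasing of theirs.
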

\begin{proof}
  By the previous lemma, we have
  \[
    g(n)=g( p^{\nu_p(n)})<\frac{p^{\nu_p(n)}-1}{2},
  \]
  therefore it suffices to show that
  \[
    \left|\frac{p^a\pm 1}{2} -n\right| \geq \frac{p^{\nu_p(n)}-1}{2}.
  \]
  In the case that $\nu_p(n)\geq a$, we have
  \[
    n-\frac{p^a\pm 1}{2}\geq p^{\nu_p(n)}-\frac{p^{\nu_p(n)}\pm 1}{2}\geq \frac{p^{\nu_p(n)}- 1}{2}.
  \]
  Otherwise, we write $a=\tilde a + \nu_p(n)$, $n=m p^{\nu_p(n)}$, and observe
  \[
    \left|\frac{p^a\pm 1}{2} -n\right|= \left|\frac{(p^{\tilde{a}}-2m)p^{\nu_p(n)}\pm 1}{2} \right| \geq \left|\frac{p^{\nu_p(n)}\pm 1}{2}\right|\geq \frac{p^{\nu_p(n)} - 1}{2},
  \]
  where the first inequality uses that $p$ is odd, hence $(p^{\tilde{a}}-2m)$ is nonzero.
\end{proof}

%

\begin{proposition} \label{Ninfty}
The inverse limit $N_{\infty} := \varprojlim_k N_k$ is isomorphic to
\[
R^{\wedge} \times R/1 \times R/3 \times R/5 \times \cdots,
\]
where $R^{\wedge}$ denotes the $p$-adic completion of $R$.
\end{proposition}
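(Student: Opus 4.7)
The plan is to reduce to the cofinal subsystem $\{N_k\}_{k\in Z_1}$ provided by Corollary~\ref{Z-sequence}, then compute the inverse limit by combining the explicit description from Lemma~\ref{some Ni} with the block-triangular projection formula of Corollary~\ref{projection Z1}. These package into a short exact sequence of inverse systems
\[
0\longrightarrow\{R/p^{a_k+2}\}_{k\in Z_1}\longrightarrow\{N_k\}_{k\in Z_1}\longrightarrow\bigl\{R/1\oplus R/3\oplus\cdots\oplus R/k\bigr\}_{k\in Z_1}\longrightarrow 0,
\]
whose left system has surjective transitions (the reductions $R/p^{a_k+2}\twoheadrightarrow R/p^{a_i+2}$ are well-defined because the existence of $\pi_{k,i}$ already forces $a_k\geq a_i$). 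By Mittag-Leffler and the cofinality of the sequence $a_k\to\infty$ (from the bound $a_k\geq k-k/(p-1)$ in the proof of Corollary~\ref{AB in Zp}), applying $\varprojlim$ termwise gives
\[
0\longrightarrow R^\wedge\longrightarrow N_\infty\longrightarrow\prod_{j\text{ odd},\,j\geq 1}R/j\longrightarrow 0.
\]

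To split this sequence, for each odd $j\geq 1$ I would construct a distinguished element $\xi_j\in N_\infty$ whose image in $\prod R/j$ is the basis vector supported at $j$. Concretely, specify $\xi_{j,k}=(0,\ldots,0,1_{R/j},0,\ldots,0)$ for $k\in Z_1$ with $k\geq j$ and $\xi_{j,k}=(C_{j\to k},0,\ldots,0)$ for $k<j$, where $C_{j\to k}\in R/p^{a_k+2}$ is the correction term produced by the recipe of Corollary~\ref{projection Z1} applied to the tuple concentrated in the $R/j$-slot. The compatibility of these components under the transition maps is exactly the transitivity relation $\pi_{k',i}=\pi_{k,i}\circ\pi_{k',k}$ from Lemma~\ref{between heads}. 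One then defines $s:\prod_jR/j\to N_\infty$ by $s((r_j)):=\sum_j r_j\xi_j$, interpreted levelwise: at each $N_k$ the $R/j$-contributions survive only for $j\leq k$, while the first factor collects the tail $\sum_{j>k}r_jC_{j\to k}$, which becomes an effectively finite sum in $R/p^{a_k+2}$ once $j$ is large enough.

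The main obstacle is controlling $v_p(C_{j\to k})$ well enough for these constructions to be well-defined. Solving the triangular system in the proof of Corollary~\ref{projection Z1} (and using the easy identity $\sum_{n'=0}^n(-1)^{n'}B_{2n'}B_{2(n-n')}=0$, which is a binomial computation from $B_{2n}=p^{2n}/(2^n n!)$) yields the closed form
\[
C_{j\to k}=-\sum_{n=0}^{(j-k-2)/2}(-1)^n B_{2n}A_{j-2n}.
\]
Bounding each summand via the recursion $A_j=p^2A_{j-2}/j$ together with the inequality $v_p(n!)\leq\sum_{i=0}^{n-1}v_p(j-2i)$ (which follows from Legendre's formula applied to the $n$ consecutive odd integers $j,j-2,\ldots,j-2(n-1)$) yields $v_p(C_{j\to k})\geq a_j$, and the growth $a_j\to\infty$ supplies both the compatibility of the $\xi_j$ and the convergence of the tail in $s$; the composition $\prod R/j\xrightarrow{s}N_\infty\to\prod R/j$ is the identity by inspection of the $R/j$-components, so $s$ is the desired section.
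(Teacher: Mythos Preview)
Your argument is correct, but it takes a genuinely different route from the paper's. The paper constructs the map $\varprojlim_{i\in Z_1}N_i\to R^\wedge\times\prod_jR/j$ directly by reading off coordinates: the $R/n$-component is well-defined by the shape of the transition maps in Corollary~\ref{projection Z1}, and the $R^\wedge$-component is extracted as a $p$-adic limit of the first coordinates, using only the qualitative bound $v_p(C)\geq\min_{m\geq i+2}a_m$ already supplied there. Injectivity and surjectivity are then checked by hand in a few lines, with no further computation.

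Your approach is more structured but buys that structure with extra work. The short exact sequence of inverse systems and the Mittag--Leffler step are clean, and your indirect observation that the well-definedness of $\pi_{k,i}$ on the first summand forces $a_k\geq a_i$ for $k>i$ in $Z_1$ is a nice way to get monotonicity without appealing to Lemma~\ref{ain}. The real cost is the splitting: to make $s$ converge levelwise you need the sharper bound $v_p(C_{j\to k})\geq a_j$, and for that you derive the explicit formula $C_{j\to k}=-\sum_n(-1)^nB_{2n}A_{j-2n}$ by solving the triangular system with the choice $y_{j-2n}=(-1)^nB_{2n}$, verify the alternating convolution identity for the $B$'s, and establish the Legendre-type inequality for consecutive odd integers. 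All of this checks out, and the closed form for $C_{j\to k}$ is of some independent interest, but none of it is needed for the paper's argument, which never looks inside $C$ beyond the coarse divisibility statement already recorded in Corollary~\ref{projection Z1}.
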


\begin{proof}
We consider the inverse limit of the final system $N_{i}$ for $i \in Z_1$.  Define a map
\[
\varprojlim_{i \in Z_1} N_{i} \rightarrow R^{\wedge} \times R/1 \times R/3 \times R/5 \times \cdots
\]
as follows.  Given a compatible sequence in $x = (x_i)_{i \in Z_1} \in \varprojlim_{i \in Z_1} N_i$, define its image to be $(y_{-1}, y_1, y_3, \ldots) \in R^{\wedge} \times R/1 \times R/3 \times \cdots$, where the coordinates $y_n$ are defined as follows.  To define $y_n$ with $n \neq -1$, choose any $k \in Z_1$ such that $k \geq n$, and then write $x_k \in N_k$ as $(x_{k,-1}, x_{k,1}, \ldots, x_{k,k})$, and set $y_n := x_{k,n}$.  It follows from Corollary~\ref{projection Z1} that this element $y_n$ is independent of the choice of $k$.

Defining $y_{-1}$ is more difficult, because of the elements denoted by $C$ in Corollary~\ref{projection Z1}.  Fix $i \in Z_1$ and let $a = \min_{m \geq i+2} a_m$.  Then in the notation of the previous paragraph, we have that $x_{k,-1} \equiv x_{i,-1} \bmod p^a$ for all $k \in Z_1$ such that $k \geq i$.  We also have that these values $\min_{m \geq i+2} a_m$ approach infinity as $i$ approaches infinity.  In this way, we form a $p$-adic Cauchy sequence, and we define $y_{-1}$ to be the limit of this sequence in $R^{\wedge}$.

It is clear that the given map $\varprojlim_{i \in Z_1} N_{i} \rightarrow R^{\wedge} \times R/1 \times R/3 \times R/5 \times \cdots$ is an $R$-module homomorphism.  To see that it is injective, consider a sequence $(x_i)$ mapping to $0$.  By the construction, we have that $x_{i,k} = 0$ for all $i$ and all $k \neq -1$.  But now in this case, with all other terms being $0$, the $C$ term from Corollary~\ref{projection Z1} is also equal to $0$, so the terms $x_{i,-1}$ must also equal $0$, for all $i \in Z_1$.

For surjectivity, consider any element $(y_{-1}, y_1, y_3, \ldots) \in R^{\wedge} \times R/1 \times R/3 \times \cdots$.  Note first that some element of the form $(*, y_1, y_3, \ldots) \in R^{\wedge} \times R/1 \times R/3 \times \cdots$ is in the image.  (All components except for the $x_{i,-1}$-components are determined, and the $x_{i,-1}$ components can be calculated inductively, one at a time, for $i \in Z_1$.)   Also, for any element $r' \in R^{\wedge}$, the element $(r', 0, 0, \ldots)$ is in the image.  Surjectivity now follows from the fact that the map is additive. 
\end{proof}

\begin{example}
When $p = 3$, the set $Z_1$ from Definition~\ref{def of Z1} contains the following elements: 5, 7, 11, 13, 17, 19, 23, 25, 31, 35, 37, 41, 43, 47, 49, 53, 55, 59, 61, 65, 67, 71, 73, 77, 79, 85, 89, 91, 95, 97, 101, 103, 107, 109, 113, 115, 119, 121, 125, 127, 131, 133, 139, 143, 145, 149, 151, 155, 157, 161, 163, 167, 169, 173, 175, 179, 181, 185, 187, 193, 197, 199.

In other words, in addition to multiples of $3$, it only excludes the elements $29, 83, 137, 191$ from the odd numbers up to $200$. 
\end{example}

For any odd integers $i \geq j \geq 1$, the map $\phi_{j,i}: R_j \rightarrow M_i$ considered in Corollary~\ref{pushout valuations} induces a map $\psi_{j,i}: R_j \rightarrow N_i$.  When $i \in Z_1$, then by Lemma~\ref{some Ni}, we have $\psi_{j,i}(1) = (c_{j,-1}, c_{j,1}, \ldots, c_{j,i})$, where the values $c_{j,k}$ are as in Corollary~\ref{pushout valuations}, with the only difference being that the initial component $c_{j,{-1}}$ here is considered as an element of $R/p^{a_i+2}$, rather than as an element of $R$.  These maps $\psi_{j,i}$ can also be used to define a map to the inverse limit, $R_j \rightarrow N_{\infty}$, as in the following corollary.

\begin{corollary} \label{psij}
For any odd integer $j \geq 1$, the maps $\psi_{j,i}$, for varying $i \geq j$ with $i \in Z_1$, determine a map $\psi_j: R_j \rightarrow N_{\infty}$, such that $\psi_j(1) = (c_{j,-1}, c_{j,1}, c_{j,3}, \ldots)$.  These values $c_{j,k}$ are the same as described in Corollary~\ref{pushout valuations}.
\end{corollary}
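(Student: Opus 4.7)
The plan is to show that the family $\{\psi_{j,i}\}_{i \in Z_1,\, i \geq j}$ is compatible with the transition maps $\pi_{k,i}\colon N_k \to N_i$, and then to invoke the universal property of the inverse limit together with the isomorphism of Proposition~\ref{Ninfty}.

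First I would unwind the construction: $\psi_{j,i}$ is the composition $R_j \xrightarrow{\phi_{j,i}} M_i \twoheadrightarrow N_i$, and for $i \in Z_1$ Lemma~\ref{some Ni} identifies $N_i$ with $R/p^{a_i+2} \oplus R/1 \oplus R/3 \oplus \cdots \oplus R/i$. Combining this with Corollary~\ref{pushout valuations} shows that $\psi_{j,i}(1)$ has coordinates $c_{j,-1} = A_j$ (viewed in $R/p^{a_i+2}$), $c_{j,n} = B_{j-n} \in R/n$ for odd $n$ with $1 \leq n \leq j$, and zero for $n > j$. To verify compatibility for $k \geq i \geq j$ (both in $Z_1$), I would apply Corollary~\ref{projection Z1} to $\psi_{j,k}(1)$: the tail $(c_{j,i+2}, \ldots, c_{j,k})$ is identically zero, so the correction term $C$ in the formula also vanishes, and $\pi_{k,i}$ just projects to the first coordinates. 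The main thing to check is the $n = -1$ coordinate, where agreement reduces to the observation that both $\pi_{k,i}(\psi_{j,k}(1))$ and $\psi_{j,i}(1)$ are represented by the same lift $A_j \in \ZZ_{(p)}$, so the quotient map $R/p^{a_k+2} \twoheadrightarrow R/p^{a_i+2}$ carries one to the other. This bookkeeping around the $-1$ coordinate is the only subtlety, and is the step I expect to require the most care.

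Finally, the universal property applied to the cofinal subsystem (cofinality of $Z_1$ in the odd positive integers follows from Corollary~\ref{Z-sequence}) produces the unique map $\psi_j\colon R_j \to \varprojlim_{i \in Z_1} N_i \cong N_\infty$. Under the isomorphism of Proposition~\ref{Ninfty}, $\psi_j(1)$ corresponds to $(c_{j,-1}, c_{j,1}, c_{j,3}, \ldots) \in R^{\wedge} \times R/1 \times R/3 \times \cdots$: the $R/n$-components are simply those of $\psi_{j,i}(1)$ for any sufficiently large $i \in Z_1$, and the $p$-adic Cauchy sequence defining the $R^{\wedge}$-coordinate is eventually constant equal to $A_j$, whose image in $R^{\wedge}$ is therefore the first component of $\psi_j(1)$.
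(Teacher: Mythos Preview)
Your proposal is correct and follows the same route as the paper's brief proof, which just cites Corollary~\ref{projection Z1} and the construction in Proposition~\ref{Ninfty}. One small caveat on the step you flagged: there need not be a quotient map $R/p^{a_k+2} \twoheadrightarrow R/p^{a_i+2}$ (nothing guarantees $a_k \geq a_i$ for general $i,k \in Z_1$, and $\pi_{k,i}$ need not respect the direct sum decompositions anyway), but your argument does not actually use this --- you are really working with the lift $\phi_{j,k}(1) \in M_k$, whose first coordinate $A_j$ lies in $R$ itself, and Corollary~\ref{projection Z1} describes the composite $M_k \to N_i$ directly.
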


\begin{proof}
This follows from the preceding comments, together with Corollary~\ref{projection Z1} and the construction in the proof of Proposition~\ref{Ninfty}.
\end{proof}

\section{Computation of $K_i$ for certain $i$}

Recall from Section~\ref{setup section} that $K_i$ is defined to be the kernel of the projection map 
\[
N_{\infty} = \varprojlim N_j \rightarrow N_i.
\]
In this section, we compute this kernel for certain values of $i$ in a certain set $Z_2$; this set $Z_2$ is a proper subset of the set $Z_1$ considered above.

\begin{definition} \label{def of Z2}
Let $X$ denote the set of all odd positive integers, and let $X_1$ denote the set of all odd positive integers which are divisible by $p$. Let $g(n)$ be as in Definition~\ref{def gn}.  Define $Z_2$ to be the set 
\[
Z_2 := X \setminus \bigcup_{n \in X_1} [n-g(n), n+g(n)].
\]
\end{definition}

The maps $\psi_j: R_j \rightarrow N_{\infty}$ from Corollary~\ref{psij} can be used to describe the $R$-modules $K_m$, as in the following lemma.  It turns out the value of $m+2$ is more important in the eventual computation than $m$ itself, so that is why we phrase this lemma in terms of $K_{i-2}$. 

\begin{lemma} \label{gens of kernel}
For any odd integer $i>1$, the kernel $K_{i-2}$ of the natural projection $N_{\infty} \rightarrow N_{i-2}$ is the closure (in the inverse limit topology) of the $R$-submodule of $N_{\infty}$ generated by $\psi_j(1)$ for all $j \geq i$.
\end{lemma}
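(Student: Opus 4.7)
The plan is to prove both inclusions between $K_{i-2}$ and the closure $\overline{S}$ of the $R$-submodule $S \subset N_\infty$ generated by the elements $\psi_j(1)$ for $j \geq i$.

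For the inclusion $\overline{S} \subset K_{i-2}$, first I would check that each generator $\psi_j(1)$ with $j \geq i$ lies in $K_{i-2}$. Concretely, for any odd $k \geq \max(j, i-2)$, the image of $\psi_j(1)$ under the projection $\pi_k : N_\infty \to N_k$ equals $\psi_{j,k}(1)$ (by the defining property of $\psi_j$ from Corollary~\ref{psij}), which lies in the image of the natural map $R_j \to N_k$. Since $j \geq i > i-2$, Lemma~\ref{between heads} then tells us that $\pi_{k,i-2}(\psi_{j,k}(1)) = 0$, so $\pi_{i-2}(\psi_j(1)) = 0$, hence $\psi_j(1) \in K_{i-2}$. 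Since $K_{i-2}$ is the kernel of $\pi_{i-2}$ and hence is closed in the inverse limit topology (where each $N_k$ is discrete), it contains $\overline{S}$.

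For the reverse inclusion, fix $x \in K_{i-2}$. The strategy is to produce, for each odd $k \geq i$, an element $y_k \in S$ with $\pi_m(y_k) = \pi_m(x)$ for all odd $m \leq k$; by definition of the inverse limit topology this will show $y_k \to x$ and hence $x \in \overline{S}$. To construct $y_k$, note that $\pi_{k,i-2}(\pi_k(x)) = \pi_{i-2}(x) = 0$, so $\pi_k(x)$ lies in $\ker(\pi_{k,i-2})$. By Lemma~\ref{between heads}, this kernel is the $R$-submodule of $N_k$ generated by the images of $R_j \to N_k$ for $j > i-2$, so we may write
\[
\pi_k(x) = \sum_{\substack{i \leq j \leq k \\ j \text{ odd}}} r_{k,j}\, \psi_{j,k}(1)
\]
for some $r_{k,j} \in R$. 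Define $y_k := \sum_{j} r_{k,j}\, \psi_j(1) \in S$. Since $j \leq k$, we have $\pi_k(\psi_j(1)) = \psi_{j,k}(1)$ by Corollary~\ref{psij}, giving $\pi_k(y_k) = \pi_k(x)$. Compatibility of the inverse system ($\pi_m = \pi_{k,m} \circ \pi_k$ for $m \leq k$) then gives $\pi_m(y_k) = \pi_m(x)$ for every $m \leq k$, so $y_k \to x$ as required.

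The main conceptual obstacle is really just verifying the first step correctly, namely that $\pi_k(\psi_j(1)) = \psi_{j,k}(1)$ whenever this makes sense and vanishes otherwise; the rest follows formally from the cofinality argument and the fact that all transition maps $\pi_{k,i}$ are surjective, which makes the natural topology on $N_\infty$ behave like an inverse limit of discrete modules. No new calculation of the coefficients $c_{j,n}$ or of the numbers $A_j, B_j$ is required; the statement is ultimately a formal consequence of Lemma~\ref{between heads} together with the construction of $\psi_j$ from Corollary~\ref{psij}.
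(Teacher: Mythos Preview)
Your proposal is correct and is precisely the argument the paper has in mind: the paper's own proof is the single sentence ``This follows immediately from the definitions and the computations in Lemma~\ref{between heads},'' and what you have written is an accurate unpacking of that sentence. Both inclusions are handled exactly as they must be, via Lemma~\ref{between heads} for the description of $\ker(\pi_{k,i-2})$ and the compatibility $\pi_k(\psi_j(1)) = \psi_{j,k}(1)$ coming from the construction of $\psi_j$.
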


\begin{proof}
This follows immediately from the definitions and the computations in Lemma~\ref{between heads}.
\end{proof}

We are interested in identifying values of $i$ for which we can provide an explicit description of the submodule of $N_{\infty}$ described in Lemma~\ref{gens of kernel}.  The elements \[
\psi_i(1), \psi_{i+2}(1), \psi_{i+4}(1), \ldots \in N_{\infty} \cong R^{\wedge} \times R/1 \times R/3 \times \ldots
\]
have first components in $R^{\wedge}$ with $p$-adic valuations $a_i, a_{i+2}, a_{i+4}, \ldots$, respectively, where $a_m$ is defined in Definition~\ref{abdef}.  It is convenient to know when the initial value $a_i$ is the minimum of the set $\{a_i, a_{i+2}, a_{i+4}, \ldots\}$.  The following lemma (in which the parameters $g(n)$ from Definition~\ref{def gn} again appear) provides cases where this value $a_i$ is indeed the minimum.

\begin{lemma} \label{ain}
Let $i$ be an odd integer which is not divisible by $p$.  Let $a_j$ and $b_j$ be defined as in Definition~\ref{abdef}.  Assume there exists an odd integer $n$ such that $i < n$ and $a_i > a_n$.  Then there exists such an $n$ with $v_p(n) \geq 3$ and $b_{n-i} < v_p(n)$.  In particular, $n - i \leq g(n)$.
\end{lemma}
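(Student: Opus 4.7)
The plan is to take $n$ to be the \emph{smallest} odd integer greater than $i$ with $a_n < a_i$, which exists by hypothesis, and to verify that this $n$ satisfies both $v_p(n) \geq 3$ and $b_{n-i} < v_p(n)$. The bound $v_p(n) \geq 3$ is immediate: if $n = i+2$, then $a_n = a_i + 2 - v_p(n) < a_i$ forces $v_p(n) \geq 3$; if $n \geq i+4$, then by minimality $a_{n-2} \geq a_i$, so the identity $a_{n-2} - a_n = v_p(n) - 2$ combined with $a_{n-2} \geq a_i > a_n$ again gives $v_p(n) \geq 3$.

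The crux is the auxiliary claim that $k := (n-i)/2$ satisfies $k \leq p^v$, where $v := v_p(n)$. I argue by contradiction: if $k > p^v$, set $n' := n - 2p^v$. Writing $n = p^v m$ with $p \nmid m$, one has $n' = p^v(m-2)$ with $m \geq 3$, so $n'$ is a positive odd integer satisfying $i < n' < n$ with $v_p(n') \geq v \geq 3$; by minimality of $n$ we get $a_{n'} \geq a_i > a_n$. On the other hand, $a_n - a_{n'} = 2p^v - \sum_{l'=0}^{p^v - 1} v_p(n - 2l')$. For every $l' \in [1, p^v - 1]$, one has $v_p(l') < v = v_p(n)$, hence $v_p(n - 2l') = v_p(l')$; the sum therefore reduces to $v + v_p((p^v - 1)!) = (p^v - 1)/(p-1)$ by Legendre's formula. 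Since $2p^v > (p^v - 1)/(p-1)$ for every odd prime $p$, this forces $a_n > a_{n'}$, contradicting $a_{n'} \geq a_i > a_n$.

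With $k \leq p^v$ established, the main inequality follows quickly. Via the substitution $l' = k - l$, the same identity $v_p(n - 2l') = v_p(l')$ (valid now for all $l' \in [1, k-1]$ since $l' < p^v$) gives $\sum_{l=1}^{k-1} v_p(i + 2l) = v_p((k-1)!)$. Because $\gcd(i, p) = 1$ and $p^v \mid n$ yield $2k \equiv -i \pmod{p^v}$, in particular $v_p(k) = 0$, so $v_p((k-1)!) = v_p(k!)$. The hypothesis $a_n < a_i$ furnishes $\sum_{l=1}^k v_p(i + 2l) \geq 2k + 1$, whence $v_p(n) = \sum_{l=1}^k v_p(i+2l) - \sum_{l=1}^{k-1} v_p(i+2l) \geq 2k + 1 - v_p(k!) > 2k - v_p(k!) = b_{n-i}$, as required. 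The hardest step is the auxiliary claim $k \leq p^v$: it is what prevents the minimal drop below $a_i$ from occurring too far from $i$, and it is essential because without it the intermediate values $v_p(i+2l)$ for $l'$ close to multiples of $p^v$ would disrupt the clean Legendre-style identification with $v_p((k-1)!)$.
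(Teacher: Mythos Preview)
Your proof is correct and follows essentially the same architecture as the paper's: choose the minimal $n>i$ with $a_n<a_i$, deduce $v_p(n)\geq 3$, establish an auxiliary claim restricting how far $n$ can be from $i$, and then use that claim to rewrite $\sum v_p(i+2l)$ in terms of a factorial valuation and conclude $b_{n-i}<v_p(n)$.

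The one substantive difference is in the auxiliary step. The paper phrases it as ``no odd $j$ with $i<j<n$ has $v_p(j)\geq v_p(n)$'', and proves it by taking $j$ maximal with maximal valuation in $(i,n)$ and showing $a_n-a_j\geq b_{n-j}\geq 0$, a contradiction. You phrase the equivalent statement as the bound $k\leq p^v$ and prove it by taking the specific point $n'=n-2p^v$ and computing $a_n-a_{n'}=2p^v-(p^v-1)/(p-1)>0$ via Legendre's formula. The paper's version is a bit slicker conceptually (it reuses the nonnegativity of the $b$-sequence directly), while yours is more hands-on but perfectly valid; your formulation $k\leq p^v$ also feeds slightly more directly into the final computation, since what is actually used there is precisely $l'<p^v$ for $l'\leq k-1$.
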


\begin{proof}
Choose a minimal $n$ such that $i < n$ and $a_i > a_{n}$.  In particular, we have $a_{n-2} > a_{n}$, so $v_p(n) \geq 3$.

There cannot exist odd $j$ such that $i < j < n$ and $v_p(j) \geq v_p(n)$.  Indeed, assume towards contradiction that there exists such a $j$.  Let $e = \max \{v_p(k) \colon i < k < n, \text{with $k$ odd}\}$ and let $j = \max \{ k \colon i < k < n, \text{ with $k$ odd and $v_p(k) = e$}\}$. We will show that $a_j \leq a_n$, which contradicts the minimality of $n$.

For $j$ and $n$ as defined above, by the inductive definition of the $a_k$ sequence, we have
\[
a_n - a_j = (2 - v_p(j+2)) + (2 - v_p(j+4)) + (2 - v_p(j+6)) + \cdots + (2 - v_p(j + n-j)).
\]
Observe that $j+2 < n$. Indeed, if $j+2=n$, then $\nu_p(2) =\nu_p(n-j) \geq \nu_p(n) \geq 3$ which is a contradiction. Next, by the definition of $j$, we know that $v_p(j) > v_p(k)$ for all odd $j+2 \leq k < n$.  This implies that $v_p(j+2) = v_p(2)$, $v_p(j+4) = v_p(4), \ldots, v_p(n-2) = v_p(n-j-2)$. We also have $v_p(n) \leq v_p(n-j)$.  In total, we have
\[
a_n - a_j \geq (2 - v_p(2)) + (2 - v_p(4)) + (2 - v_p(6)) + \cdots + (2 - v_p(n-j)).
\]
This latter expression is equal to $b_{n-j}$, and we have already seen that all terms in the $b_k$ sequence are non-negative.  Thus $a_n \geq a_j$, which is a contradiction.  We conclude that for all odd $j$ in the range $i < j < n$, we have $v_p(j) < v_p(n)$.

Write $\sum'$ to deduce the sum over the \emph{even} values in the specified interval.  Because $a_i > a_n$, we deduce that 
\[
n-i-\sideset{}{'}\sum_{k = 2}^{n-i} v_p(i+k) < 0.
\]
Rewriting,
\[
n-i-\sideset{}{'}\sum_{k = 2}^{n-i-2} v_p(n-k) < v_p(n).
\]
By our comments above, we have $v_p(n-k) = v_p(k)$ for all $k$ in the above range, so we have the inequality
\[
n-i-\sideset{}{'}\sum_{k = 2}^{n-i-2} v_p(k) < v_p(n).
\]
Because $i$ is not divisible by $p$, we also have that $n-i$ is not divisible by $p$, so the above inequality is the same as
\[
b_{n-i} < v_p(n).
\]
The result follows.
\end{proof}

\begin{proposition} \label{kernel gens}
For any odd $l \geq 1$, let $e_l \in N_{\infty} \cong R^{\wedge} \times R/1 \times R/3 \times \cdots$ be the element which is $1$ in the $R/l$ component and which is zero in all other coordinates.  Also write $e_{-1}$ for the element $(1, 0, 0, \ldots)$.
Let $Z_2$ be as in Definition~\ref{def of Z2} and assume $i \in Z_2$.  Let $A_i \in R$ be defined as in Definition~\ref{abdef}. Then the $R$-submodule of $N_{\infty}$ generated by 
\[
\psi_i(1), \psi_{i+2}(1), \psi_{i+4}(1), \ldots
\] 
is equal to the $R$-submodule generated by 
\[
A_ie_{-1}, e_{i}, e_{i+2}, e_{i+4}, \ldots.
\]
\end{proposition}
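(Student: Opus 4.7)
The plan is to prove both containments directly by unpacking the explicit formula $\psi_j(1) = (A_j, B_{j-1}, B_{j-3}, \ldots, B_0, 0, 0, \ldots)$ from Corollary~\ref{psij}. Write $S$ for the $R$-submodule of $N_\infty$ generated by $\psi_i(1), \psi_{i+2}(1), \ldots$ and $T$ for the submodule generated by $A_i e_{-1}, e_i, e_{i+2}, \ldots$.

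For $S \subseteq T$, I would fix odd $j \geq i$ and decompose $\psi_j(1)$ as $(A_j/A_i)\cdot A_i e_{-1}$ plus contributions at odd positions $1 \leq n \leq j$. Two checks are needed. First, the scalar $A_j/A_i$ must lie in $\ZZ_{(p)} \subseteq R$, i.e.\ $a_j \geq a_i$; observe that $i \in Z_2$ forces $i$ to be coprime to $p$ (else $i \in [i-g(i), i+g(i)]$), so Lemma~\ref{ain} applies and rules out any $n > i$ with $a_n < a_i$, since such $n$ would satisfy $n - i \leq g(n)$, placing $i \in [n-g(n), n+g(n)]$ and contradicting $i \in Z_2$. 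Second, the components $c_{j,n} = B_{j-n}$ at odd positions $n < i$ must vanish in $R/n$: this is automatic when $n$ is coprime to $p$ (since $R/n = 0$), and when $n \in X_1$ the defining property $i \notin [n-g(n), n+g(n)]$ together with $n < i$ gives $i > n+g(n)$, hence $j - n > g(n)$, hence $b_{j-n} \geq v_p(n)$ by the definition of $g(n)$. What survives is then a finite $R$-linear combination of $A_i e_{-1}$ and $e_i, e_{i+2}, \ldots, e_j$, so $\psi_j(1) \in T$.

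For $T \subseteq S$, the crucial observation is that since $i$ is coprime to $p$ we have $R/i = 0$, so $e_i = 0$ in $N_\infty$. Together with the decomposition above at $j = i$ this gives $\psi_i(1) = A_i e_{-1}$, so $A_i e_{-1} \in S$. I then induct on $k \geq 0$ to show $e_{i+2k} \in S$, using the identity
\[
\psi_{i+2k}(1) \;=\; (A_{i+2k}/A_i)\,A_i e_{-1} \;+\; \sum_{\ell=0}^{k} B_{2\ell}\, e_{i+2(k-\ell)},
\]
whose coefficient at $e_{i+2k}$ is $B_0 = 1$ (corresponding to $\ell = 0$). Solving for $e_{i+2k}$ expresses it as an $R$-linear combination of $\psi_{i+2k}(1)$, $A_i e_{-1}$, and the previously produced $e_i = 0, e_{i+2}, \ldots, e_{i+2(k-1)}$, all already in $S$ by the inductive hypothesis (with coefficients $B_{2\ell} \in \ZZ_{(p)} \subseteq R$ by Corollary~\ref{AB in Zp}).

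The main obstacle is conceptual rather than computational: keeping track of why both sides of $i$ in the definition of $Z_2$ are needed. The lower side $n < i$ drives the vanishing of each $\psi_j(1)$ at low positions, while the upper side $n > i$ feeds into Lemma~\ref{ain} to secure the integrality of $A_j/A_i$. Once these two vanishings are in hand, both inclusions fall out mechanically from the triangular structure of the resulting system, whose diagonal entries are units.
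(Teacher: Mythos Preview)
Your proposal is correct and follows essentially the same route as the paper's proof: both rely on Lemma~\ref{ain} (via the ``upper side'' of $Z_2$) to guarantee $A_j/A_i \in \ZZ_{(p)}$, on Lemma~\ref{cin 0}/the definition of $g(n)$ (via the ``lower side'' of $Z_2$) to kill the components $c_{j,n}$ at positions $n<i$, and then on the unit-triangular structure coming from $c_{j,j}=B_0=1$. The only organizational difference is that the paper packages both containments into a single induction showing that the finitely generated truncations $\langle \psi_i(1),\ldots,\psi_{i+j}(1)\rangle$ and $\langle A_ie_{-1},e_i,\ldots,e_{i+j}\rangle$ coincide for every even $j\geq 0$, whereas you prove $S\subseteq T$ and $T\subseteq S$ separately; the underlying computations are identical.
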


\begin{proof}
We show using induction that for any fixed even $j \geq 0$, the $R$-submodule of $N_{\infty}$ generated by 
\[
\psi_i(1), \psi_{i+2}(1), \psi_{i+4}(1), \ldots, \psi_{i+j}(1)
\]
is equal to the $R$-submodule generated by 
\[
A_ie_{-1}, e_{i}, e_{i+2}, e_{i+4}, \ldots, e_{i+j}.
\]
The claim of the proposition then follows immediately.

For the base case $j=0$, note that $\psi_i(1) = A_ie_{-1}$ (by Lemma~\ref{some Ni}, because $Z_2 \subseteq Z_1$) and that $e_i = 0 \in R/i$ (because $i \in Z_2$, and hence $i$ is not divisible by $p$).

Now assume the result has been shown for $j-2$.  We write
\[
\psi_{i+j}(1) = A_{i+j}e_{-1} + c_{i+j,1} e_1 + c_{i+j,3} e_3 + \cdots + c_{i+j, i+j - 2} e_{i+j-2} + e_{i+j}.
\]
We must have $c_{i+j,n} = 0$ for all $-1 < n < i$.  (If not, so $c_{i+j,n} \neq 0$ for some  $-1 < n < i$, then by Lemma~\ref{cin 0}, we have $i + j - n \leq g(n)$, but then $n < i \leq n + g(n)$, which contradicts our assumption that $i \in Z_2$.) 
So we have
\[
\psi_{i+j}(1) = A_{i+j}e_{-1} +c_{i+j,i}e_i +c_{i+j,i+2}e_{i+2}+ \cdots + c_{i+j, i+j - 2} e_{i+j-2} + e_{i+j}.
\]
We have that $A_{i+j}$ is a multiple of $A_i$ by Lemma~\ref{ain}, so $\psi_{i+j}(1)$ is in the $R$-submodule of $N_{\infty}$ generated by 
\[
A_ie_{-1}, e_{i}, e_{i+2}, e_{i+4}, \ldots, e_{i+j}.
\]
Conversely, by the induction hypothesis, we have that $A_ie_{-1}, e_{i}, \ldots, e_{i+j-2}$ are in the $R$-submodule generated by 
\[
\psi_i(1), \psi_{i+2}(1), \psi_{i+4}(1), \ldots, \psi_{i+j-2}(1),
\]  
and again note that by Lemma~\ref{ain}, the number $A_{i+j}$ is a multiple of $A_i$. Therefore 
\[
e_{i+j} = \psi_{i+j}(1) - A_{i+j}e_{-1} -c_{i+j,i}e_i -c_{i+j,i+2}e_{i+2}- \cdots - c_{i+j, i+j - 2} e_{i+j-2}
\]
is in the $R$-submodule generated by
\[
\psi_i(1), \psi_{i+2}(1), \psi_{i+4}(1), \ldots, \psi_{i+j}(1).
\]
This completes the induction.  
\end{proof}

\begin{proposition}
Let $i>1$ be an odd integer, and assume $i \in Z_2$, where $Z_2$ is the set defined in Definition~\ref{def of Z2}.  Then $K_{i-2}$ is isomorphic as an $R$-module to
\[
R^{\wedge} \times R/(i) \times R/(i+2) \times R/(i+4) \times \cdots.
\]
\end{proposition}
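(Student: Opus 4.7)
The plan is to combine Lemma~\ref{gens of kernel} and Proposition~\ref{kernel gens} with an explicit computation of topological closure in $N_\infty$. By Lemma~\ref{gens of kernel}, $K_{i-2}$ equals the closure (in the inverse-limit topology on $N_\infty$) of the $R$-submodule generated by $\psi_j(1)$ for odd $j \geq i$, and by Proposition~\ref{kernel gens}, which applies since $i \in Z_2$, this coincides with the submodule $M \subset N_\infty$ generated by $A_i e_{-1}, e_i, e_{i+2}, e_{i+4}, \ldots$. Under the isomorphism $N_\infty \cong R^\wedge \times R/1 \times R/3 \times \cdots$ of Proposition~\ref{Ninfty}, the submodule $M$ consists of tuples $(sA_i, 0, \ldots, 0, s_i, s_{i+2}, \ldots, s_N, 0, 0, \ldots)$ with $s \in R$, with the $R/1, R/3, \ldots, R/(i-2)$ slots zero, and with only finitely many $s_j$ nonzero.

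Next I would identify the inverse-limit topology on $N_\infty$ explicitly. Since $Z_1$ is cofinal in the odd positive integers (Corollary~\ref{Z-sequence}), the projections $N_\infty \to N_k$ for $k \in Z_1$ determine the topology; tracing through the construction in Proposition~\ref{Ninfty} and using that $a_k+2 \to \infty$ as $k \to \infty$ along $Z_1$, this topology agrees with the product topology in which $R^\wedge$ carries its $p$-adic topology and each $R/j$ is discrete.

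Closing $M$ factor by factor in this topology: the $R/j$ components for $j \geq i$ fill out the full product $\prod_{j \geq i}R/j$ (any element is the limit of its finite truncations), while the $R/1,\ldots,R/(i-2)$ components stay zero. For the first coordinate, density of $R$ in $R^\wedge$ gives density of $R \cdot A_i$ in $A_iR^\wedge = p^{a_i}R^\wedge$, and the latter is closed in $R^\wedge$ as the kernel of the continuous surjection $R^\wedge \twoheadrightarrow R/p^{a_i}$. Combining, $K_{i-2} \cong A_i R^\wedge \times R/i \times R/(i+2) \times R/(i+4) \times \cdots$. Since $R$, and hence $R^\wedge$, is $p$-torsion-free, and since $A_i = p^{a_i} u$ with $u \in \ZZ_{(p)}^\times$, multiplication by $A_i$ is an $R$-module isomorphism $R^\wedge \xrightarrow{\cong} A_iR^\wedge$; substituting yields the claimed formula.

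The main obstacle I anticipate is the topological closure step---carefully identifying the inverse-limit topology on $N_\infty$ under the isomorphism of Proposition~\ref{Ninfty} and verifying that the closure of $R\cdot A_i$ inside $R^\wedge$ is exactly $A_iR^\wedge$. Once these two points are in place, the rest of the argument is a bookkeeping exercise with the explicit description of $M$.
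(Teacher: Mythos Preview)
Your proposal is correct and follows the same approach as the paper. The paper's own proof is a one-liner: it invokes Proposition~\ref{kernel gens} and then asserts that the closure of the $R$-submodule generated by $A_i e_{-1}, e_i, e_{i+2}, \ldots$ is ``clearly'' isomorphic to the stated module. You have simply unpacked this ``clearly'' by identifying the inverse-limit topology and computing the closure coordinate by coordinate, which is exactly the implicit reasoning the paper leaves to the reader.
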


\begin{proof}
It's clear that the closure in $N_{\infty}$ of the $R$-submodule generated by $A_ie_{-1}, e_{i}, e_{i+2}, \ldots$ is isomorphic to the given module, so the claim follows from Proposition~\ref{kernel gens}.
\end{proof}

\begin{corollary} Let $i>1$ be an odd integer, and assume $i \in Z_2$, where $Z_2$ is the set defined in Definition~\ref{def of Z2}.  Then $N_{i-2}$ is isomorphic as an $R$-module to
\[
R/p^{a_i} \oplus R/1 \oplus R/3 \oplus \cdots \oplus R/(i-2).
\]

\end{corollary}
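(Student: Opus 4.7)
The plan is to apply the defining short exact sequence
\[
0 \longrightarrow K_{i-2} \longrightarrow N_{\infty} \longrightarrow N_{i-2} \longrightarrow 0
\]
and identify the quotient explicitly using the product descriptions already obtained. By Proposition~\ref{Ninfty} we know $N_{\infty} \cong R^{\wedge} \times R/1 \times R/3 \times \cdots$, and by Proposition~\ref{kernel gens} (which applies because $i \in Z_2 \subseteq Z_1$) together with Lemma~\ref{gens of kernel}, the submodule $K_{i-2} \subseteq N_\infty$ is the closure in the inverse-limit topology of the $R$-span of $\{A_i e_{-1},\, e_i,\, e_{i+2},\, e_{i+4},\, \ldots\}$.

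I would then compute the quotient coordinate by coordinate. The $R$-span of $\{e_i, e_{i+2}, \ldots\}$ is dense in the tail $\prod_{l \geq i} R/l$ (a neighborhood basis of $0$ in $N_{\infty}$ is given by the kernels of projections onto finitely many coordinates), so its closure is the entire tail and quotienting kills every factor $R/l$ with $l \geq i$. The element $A_i e_{-1} = (A_i, 0, 0, \ldots)$ has all coordinates except its initial one equal to zero, so the factors $R/1, R/3, \ldots, R/(i-2)$ pass to the quotient unchanged, yielding $N_{i-2} \cong \bigl(R^{\wedge}/A_i R^{\wedge}\bigr) \oplus R/1 \oplus R/3 \oplus \cdots \oplus R/(i-2)$.

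To conclude I would identify $R^{\wedge}/A_i R^{\wedge}$ with $R/p^{a_i}$. By Definition~\ref{abdef} and Corollary~\ref{AB in Zp}, the element $A_i \in \ZZ_{(p)}$ has $p$-adic valuation exactly $a_i$, so $A_i$ differs from $p^{a_i}$ by a unit in $\ZZ_{(p)}^{\times} \subseteq R^{\wedge}$, and hence $A_i R^{\wedge} = p^{a_i} R^{\wedge}$. Since $R$ is $p$-torsion-free, the short exact sequence $0 \to R \xrightarrow{p^{a_i}} R \to R/p^{a_i} \to 0$ identifies $R^{\wedge}/p^{a_i} R^{\wedge}$ with $R/p^{a_i}$ (which is already $p$-adically complete, being $p^{a_i}$-torsion). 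The only subtle point is confirming that the closure of $\sum_{l \geq i} R \cdot e_l$ exhausts the tail product, but this is precisely the same density observation that underpins the previous proposition's identification of $K_{i-2}$ as $R^{\wedge} \times R/i \times R/(i+2) \times \cdots$, so once that is granted the present corollary reduces to clean bookkeeping.
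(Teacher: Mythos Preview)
Your proposal is correct and is exactly the argument the paper has in mind: the corollary is stated without proof precisely because it drops out of the short exact sequence $0 \to K_{i-2} \to N_{\infty} \to N_{i-2} \to 0$ once the preceding proposition has identified $K_{i-2}$ as the closure of the span of $A_i e_{-1}, e_i, e_{i+2}, \ldots$ inside $N_{\infty} \cong R^{\wedge} \times \prod_{l \geq 1} R/l$. Your bookkeeping---killing the tail factors, leaving $R/1, \ldots, R/(i-2)$ untouched, and reducing the first factor to $R^{\wedge}/A_i R^{\wedge} \cong R/p^{a_i}$---is precisely the intended computation.
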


As a reality check we point out that this matches with Lemma \ref{some Ni} and the first short exact sequence on page 4.

\begin{example}
When $p = 3$, the set $Z_2$ from Definition~\ref{def of Z2} contains the following elements: 5, 7, 11, 13, 17, 19, 23, 31, 35, 37, 41, 43, 47, 49, 53, 55, 59, 61, 65, 67, 71, 73, 77, 85, 89, 91, 95, 97, 101, 103, 107, 109, 113, 115, 119, 121, 125, 127, 131, 139, 143, 145, 149, 151, 155, 157, 161, 163, 167, 169, 173, 175, 179, 181, 185, 193, 197, 199.

In other words, in addition to multiples of $3$, it only excludes the elements 25,  29, 79, 83, 133, 137, 187, 191. 
\end{example}

\begin{remark} It follows from Corollary \ref{Z-sequence} that all odd numbers of the form $\frac{p^a\pm 1}{2}$ for $a>0$ are contained in $Z_2$. 

\end{remark}

\section{The size of the sets $Z_1$ and $Z_2$}

We give lower-bounds on the proportion of positive odd integers in the sets $Z_1$ and $Z_2$ from Definition~\ref{def of Z1} and Definition~\ref{def of Z2}. 



The following lemma should be intuitively clear.  It roughly says that $2/p^e$ of odd integers are of the form $cp^e \pm b$, where $b$ is some fixed constant.  The point of the lemma is to give a precise version of this claim.

\begin{lemma} \label{proportion bounds}
Let $X$ denote the set of all odd positive integers, and for any positive number $N$, let $X_N := X \cap [0,N]$.  Fix a positive integer $e$, and an even positive integer $b$, where $b < \frac{p^e}{2}$.  Let
\[
Y_{e,N} := \{ x \in X_N \colon x = cp^e \pm b, \text{ some }c \in \ZZ\}.
\]
We have \[
\frac{\# Y_{e,N}}{\# X_N} < \frac{2}{p^e} + \frac{2}{\# X_N}.
\]
\end{lemma}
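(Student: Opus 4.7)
The plan is to reduce the estimate to an elementary counting argument. The crucial starting observation is that since $p$ is odd and $b$ is even, the integer $cp^e \pm b$ has the same parity as $c$. Hence the odd elements of $\{cp^e \pm b : c \in \ZZ\}$ are exactly those coming from odd $c$, and we may rewrite
\[
Y_{e,N} = \{cp^e + b : c \in \ZZ \text{ odd},\ 0 < cp^e+b \leq N\} \cup \{cp^e - b : c \in \ZZ \text{ odd},\ 0 < cp^e-b \leq N\}.
\]

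Next, I would use the hypothesis $0 < b < p^e/2$ to argue that in each of the two sets above, only \emph{positive} odd values of $c$ can contribute. Indeed, $c = 0$ gives the even number $\pm b$, which is excluded; and if $c \leq -1$ then $cp^e + b \leq -p^e + b < 0$, and similarly for $cp^e - b$. The same inequality $b < p^e/2$ also shows that the two sets above are disjoint: an equality $c_1 p^e + b = c_2 p^e - b$ forces $p^e \mid 2b$, which together with $p$ odd and $0 < b < p^e/2$ is impossible.

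With these observations in place, the counting is routine. The number of positive odd integers $c$ satisfying $cp^e + b \leq N$ is at most $\tfrac{N-b}{2p^e} + \tfrac{1}{2}$, and similarly the number satisfying $cp^e - b \leq N$ is at most $\tfrac{N+b}{2p^e} + \tfrac{1}{2}$. Adding and using disjointness gives the clean bound
\[
\# Y_{e,N} \leq \frac{N}{p^e} + 1.
\]
Finally, since $\# X_N = \lceil N/2 \rceil$, we have $N \leq 2\,\# X_N$, whence
\[
\frac{\# Y_{e,N}}{\# X_N} \leq \frac{N/p^e + 1}{\# X_N} \leq \frac{2}{p^e} + \frac{1}{\# X_N} < \frac{2}{p^e} + \frac{2}{\# X_N},
\]
the strict inequality coming from $1 < 2$ in the last step.

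There is no real obstacle here; the only point that requires a bit of care is bookkeeping the parity condition on $c$ (to extract the factor $\tfrac{1}{2}$ that gives the final $\tfrac{2}{p^e}$ rather than $\tfrac{1}{p^e}$) and making sure the error term $+1$ survives the passage to $\# X_N$, so that the inequality is strict.
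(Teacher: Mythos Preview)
Your argument is correct. The parity observation is exactly the right starting point, the disjointness and positivity of $c$ are handled cleanly, and the lattice-point count $\#\{c\text{ odd},\,c>0,\,c\le M\}\le M/2+1/2$ together with $N\le 2\,\#X_N$ (for integer $N$, which is the case actually used later in the paper) gives the stated inequality---in fact with the slightly sharper error term $1/\#X_N$ before you relax it to $2/\#X_N$.

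The paper proves the same bound by a different device. Rather than counting directly, it fixes the ``minus'' half $Y_{e,N,-}=\{x\in X_N:x=cp^e-b\}$, removes its least element to obtain $Y$, and considers the $p^e$ translates $Y,\,Y-2,\,\ldots,\,Y-2p^e+2$. These are pairwise disjoint subsets of $X_N$, and their union misses at least the element $p^e-b$, giving $p^e\cdot\#Y<\#X_N$, hence $\#Y_{e,N,-}<\#X_N/p^e+1$; the ``plus'' half is treated the same way and the two bounds are added. So the paper uses a pigeonhole/packing argument inside $X_N$, while you use a straight arithmetic-progression count inside $[0,N]$ and only pass to $\#X_N$ at the end. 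Both are entirely elementary; your route is a bit more explicit and yields a marginally better constant, whereas the paper's translate argument never leaves the set $X_N$ and so sidesteps the passage $N\le 2\,\#X_N$.
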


\begin{proof}
Let $Y_{e,N,-} := \{ x \in X_N \colon x = cp^e - b, \text{ some }c \in \ZZ\}.$  This set contains approximately half of the elements of $Y_{e,N}$, and it clearly suffices to prove that 
\[
\# Y_{e,N,-} < \frac{\# X_N}{p^e}  + 1.
\]
To ease notation, write $X$ instead of $X_N$ and write $Y$ for the set obtained from $Y_{e,N,-}$ obtained by removing its minimal element $p^e - b$ (we are ignoring the trivial case that $Y_{e,N,-}$ is the empty set).  In terms of this new notation, we must prove that 
\[
\# Y < \frac{\# X}{p^e}.
\]

Consider the following translates of $Y$: 
\[
Y, Y-2, Y-4, \ldots, Y - 2p^e + 2.
\]
We have listed $p^e$ of these sets, and they are pairwise disjoint. Their union is strictly contained in $X$ (because the minimal element in $Y$ is $3p^e - b$).  Because these sets all have cardinality $\# Y$, we deduce that $p^e \cdot \#Y < \# X$, as required.
\end{proof}

\begin{proposition}
Let $X$ denote the set of all odd positive integers.  Let $Z_1$ be as in Definition~\ref{def of Z1} and let $Z_2$ be as in Definition~\ref{def of Z2}.  Fix any positive integer $N$, and write $\lambda := \frac{2p-3}{2p-2}$.
We have
\[
\frac{\#(Z_1 \cap [0,N])}{\#(X \cap [0,N])} \geq 1 - \frac{1}{p} - \frac{1}{p^3} - \frac{1}{p^5} - \left(\sum_{k\geq 6 \textup{ even}} \frac{1}{p^{\lambda k}} \right) - \frac{\log_p (N)+1}{\lambda {\#(X \cap [0,N])} }.
\]
and
\[
\frac{\#(Z_2 \cap [0,N])}{\#(X \cap [0,N])} \geq 1 - \frac{1}{p} - \frac{2}{p^3} - \frac{2}{p^5} - \left(\sum_{k\geq 6 \textup{ even}} \frac{2}{p^{\lambda k}} \right) - \frac{2\log_p (N)+2}{\lambda {\#(X \cap [0,N])}}.
\]
\end{proposition}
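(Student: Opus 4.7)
The plan is to bound the proportion of odd positive integers excluded from $Z_1$ (resp.\ $Z_2$) by stratifying the complement according to the shift $k$ between a blocker $n \in X_1$ and an excluded odd integer $m$. From Definition~\ref{def of Z1}, an odd $m$ fails to lie in $Z_1$ precisely when $m = n + k$ for some $n \in X_1$ and some even $k$ with $0 \leq k \leq g(n)$; for $Z_2$, by Definition~\ref{def of Z2}, one additionally allows $m = n - k$. It therefore suffices to bound, for each even $k \geq 0$, the cardinality of $\{m \in X \cap [0,N] : m - k \in X_1,\ g(m - k) \geq k\}$ (together with its two-sided analogue in the $Z_2$ case), and then sum over $k$.

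The main input is that $g(n) \geq k$ forces $v_p(n)$ to be large. By the definition of $g$ there exists an even $j \geq k$ with $b_j < v_p(n)$, and combined with the linear lower bound $b_j > \lambda j$ of Lemma~\ref{b bounds} this yields $v_p(n) > \lambda k$, so $m - k$ is divisible by $p^{\lfloor \lambda k \rfloor + 1}$. Applying Lemma~\ref{proportion bounds} with $e = \lfloor \lambda k \rfloor + 1$ and $b = k$ then bounds the relevant cardinality by $\#X_N / p^{\lambda k} + 1$; the hypothesis $k < p^e/2$ is a routine verification using $\lambda \geq 3/4$ for $p \geq 3$. For the three sharpest terms $k \in \{0, 2, 4\}$, direct computation from Definition~\ref{abdef} gives $b_0 = 0$, $b_2 = 2$, $b_4 = 4$, and using Lemma~\ref{b bounds} one checks $\min_{j \geq k,\, j \text{ even}} b_j = b_k$ for these values, so $g(n) \geq k$ is in fact equivalent to $v_p(n) \geq b_k + 1$; this sharpens the bound to exactly $1/p$, $1/p^3$, $1/p^5$ in the three leading terms (with $k = 0$ treated by a direct count of odd multiples of $p$, since Lemma~\ref{proportion bounds} requires $b > 0$).

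Summing the one-sided contributions over even $k$ gives the $Z_1$ inequality; summing the two-sided contributions (which introduces the factor of $2$ on the main terms for $k > 0$) gives the $Z_2$ inequality. The range of contributing shifts is finite: a nonempty shift-$k$ contribution requires $n \leq N$ with $g(n) \geq k$, which forces $\lambda k < v_p(n) \leq \log_p N$, and hence $k < \log_p(N)/\lambda$. The number of relevant even $k$ is therefore at most $(\log_p N + 1)/(2\lambda)$, and after multiplying by $1$ (for $Z_1$) or $2$ (for $Z_2$) the total additive error from the ``$+1$'' in Lemma~\ref{proportion bounds} is absorbed into the stated error terms.

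The calculations are largely bookkeeping; the only mildly delicate step is ensuring the crossover between the sharp small-$k$ bounds and the loose tail bound $1/p^{\lambda k}$ at $k = 6$, which reduces to checking $b_k + 1 \geq \lambda k$ for $k \geq 6$ and so follows immediately from Lemma~\ref{b bounds}.
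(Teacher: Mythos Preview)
Your proposal is correct and follows the same strategy as the paper: stratify the excluded odd integers by the shift $k$ from the nearest relevant multiple of $p$, use $g(n)\ge k \Rightarrow v_p(n)>\lambda k$ (the paper packages this as $e_k>\lambda k$ via Lemma~\ref{estimate gpa}), apply Lemma~\ref{proportion bounds} for each $k\ge 2$ with the sharp exponents $3,5$ at $k=2,4$, and bound the number of contributing shifts to control the additive error. One small point to tighten: your cutoff argument ``$n\le N$, hence $v_p(n)\le \log_p N$'' is valid for $Z_1$ and for the $+k$ half of $Z_2$, but in the $-k$ direction of $Z_2$ one only has $n\le N+k$; the paper instead shows directly that the least element $p^{e_k}-k$ of $Y_{e_k,N}$ exceeds $N$ once $\lambda k>\log_p(N)+1$, which handles both directions uniformly and lands exactly on the stated error term.
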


\begin{proof}
We prove the result for $Z_2$.  The argument for $Z_1$ is the same.

To obtain $Z_2$, we can proceed as follows:
\begin{enumerate}
\item Begin with the set $X_N$ of all odd positive integers up to $N$.
\item Remove all multiples of $p$.
\item For every even positive integer $k$, let $e_k$ be the minimal positive integer such that $g(p^{e_k}) \geq k$.  Remove all elements of the form $cp^{e_k} \pm k$, i.e., the set $Y_{e_k, N}$.
\end{enumerate}
The second step removes at most $\frac{\# X_N}{p}$ many elements. For the third step, we explicitly know that $e_2 = 3$ and $e_4 = 5$.  The values $e_j$ for $j \geq 6$ depend on the odd prime $p$, and instead of finding these values exactly, we use the bound from Lemma~\ref{estimate gpa}.  From that lemma's proof, we have 
\[
e_k > \frac{2p-3}{2p-2} \, k.
\]
We can then apply Lemma~\ref{proportion bounds} to estimate
\[
  \frac{Y_{e_k, N}}{\# X_N}<\frac{2}{p^{e_k}} + \frac{2}{\# X_N} .
\] 
While the first summand assembles into a geometric series, the second one is constant in $X_N$. This requires us to bound the occurrence of $k$ such that $Y_{e_k, N}$ is nonempty. The smallest possible element in $Y_{e_k, N}$ is given by $p^{e_k}-k$, which is bounded below by
\[
  p^{\frac{2p-3}{2p-2} \, k}-k.
\]
We denote $\lambda=\frac{2p-3}{2p-2}$, and claim that for $\lambda k>\log_p(N)+1$, the inequality $p^{\lambda k}-k>N$ holds as desired:
For this, note first that $N=p^{\log_p N +1 -1}$, so it suffices to show that 
\[
  p^{\lambda k}-k>p^{\lambda k-1} \Leftrightarrow (p-1)p^{\lambda k -1}-k>0.
\]
As $k$ ranges over even positive integers, we first check this manually for $k=2$: 
\[
  (p-1)p^{\lambda 2 -1}-2 =(p-1)p^{\frac{(2p-3)-(p-1)}{p-1}}-2=\overbrace{(p-1)}^{\geq 2}\overbrace{p^{\frac{p-2}{p-1}}}^{>1}-2>0.
\]
Next, note that for $f(k)=(p-1)p^{\lambda k -1}-k$, we have the derivative $f'(k)=(p-1)\log(p)p^{\lambda k -1}-1$. By the same reason as above, this is non-negative for $k\geq 0$, so $f(k)>0$ for $k\geq 2$. 

The upshot is then that we only need to remove the sets $Y_{e_k,N}$ for $k\leq \frac{\log_p(N)+1}{\lambda}$. Putting together our observations, this leaves us with the result:
\begin{align*}
\frac{\#(Z_2 \cap [0,N])}{\#(X \cap [0,N])} & \geq 1 - \frac{1}{p} - \sum_{k\geq 2 \textup{ even}}\frac{\# Y_{e_k,N}}{\# X_N}\\
&> 1 - \frac{1}{p} - \frac{2}{p^3} - \frac{2}{p^5} - \left(\sum_{k\geq 6 \textup{ even}} \frac{2}{p^{\lambda k}} \right) - \frac{2\log_p (N)+2}{\lambda \#X_N }.
\end{align*}
\end{proof}

\begin{remark}
The proportion of elements in $Z_1$ and $Z_2$ is larger when the (odd) prime $p$ is larger.  For example, when $p = 3$, we have  
\[
\liminf \frac{\#(Z_1 \cap [0,N])}{\#(X \cap [0,N])} \geq 0.61 \text{ and } \liminf \frac{\#(Z_2 \cap [0,N])}{\#(X \cap [0,N])} \geq 0.58,
\]
and when $p = 101$, we have
\[
\liminf \frac{\#(Z_1 \cap [0,N])}{\#(X \cap [0,N])} \geq 0.99 \text{ and } \liminf \frac{\#(Z_2 \cap [0,N])}{\#(X \cap [0,N])} \geq 0.99.
\]
\end{remark}

\bibliography{Tate}
\bibliographystyle{plain}

\end{document}